\documentclass[12pt,centertags,oneside]{amsart}
\usepackage{amsmath,amstext,amsthm,amscd,typearea}
\usepackage{amssymb}
\usepackage{a4wide}
\usepackage[mathscr]{eucal}
\usepackage{mathrsfs}
\usepackage{typearea}
\usepackage{charter}
\usepackage{pdfsync}
\usepackage[a4paper,width=16.2cm,top=3cm,bottom=3cm]{geometry}

\usepackage{url}  

\numberwithin{equation}{section}




\newtheorem{theorem}{Theorem}[section]
\newtheorem{definition}[theorem]{Definition}
\newtheorem{proposition}[theorem]{Proposition}
\newtheorem{corollary}[theorem]{Corollary}
\newtheorem{lemma}[theorem]{Lemma}
\newtheorem{remark}[theorem]{Remark}

\newcommand{\supp}{{\rm supp}}

\newcommand{\dist}{\mathop{\mathrm{dist}}\nolimits}
\renewcommand{\Re}{\mathop{\mathrm{Re}}\nolimits}
\renewcommand{\Im}{\mathop{\mathrm{Im}}\nolimits}

\def\Ker{\operatorname{Ker}}

\newcommand{\Id}{{\rm Id}}

\newcommand{\C}{\mathbb{C}}
\newcommand{\D}{\mathbb{D}}
\newcommand{\N}{\mathbb{N}}

\newcommand{\R}{\mathbb{R}}
\renewcommand\P{\mathbb{P}}

\newcommand{\Sb}{\mathbb{S}}

\renewcommand{\H}{\mathbb{ H}}

\newcommand{\boldsym}[1]{\boldsymbol{#1}}

\title[Fekete points on some real manifolds  ]{Equidistribution rate for Fekete points on some  real manifolds}

\author{Duc-Viet Vu}
\address{UPMC Univ Paris 06, UMR 7586, Institut de
Math{\'e}matiques de Jussieu-Paris Rive Gauche, 4 place Jussieu, F-75005 Paris, France.}
\email{duc-viet.vu@imj-prg.fr}
\thanks{This research is supported by grants from R\'egion Ile-de-France. }

\date{\today }
\begin{document}

\begin{abstract}  Let $L$ be a positive line bundle over a compact complex projective manifold $X$ and $K \subset X$ be a compact set which is regular in a sense of pluripotential theory. A Fekete configuration of order $k$ is a finite subset of $K$ maximizing a Vandermonde type determinant associated with the power $L^k$ of $L.$ Berman, Boucksom and Witt Nystr\"om proved that the empirical measure associated with a Fekete configuration converges to the equilibrium measure of $K$ as  $k \rightarrow \infty$. Dinh, Ma and Nguyen obtained an estimate for the rate of convergence. Using techniques from Cauchy-Riemann geometry, we show that the last result holds when $K$ is a real nondegenerate $\mathcal{C}^5$-piecewise submanifold of $X$ such that its tangent space at any regular point is not contained in a complex hyperplane of the tangent space of $X$ at that point. In particular, the estimate holds for Fekete points on some compact sets in $\R^n$ or the unit sphere in $\R^{n+1}.$
\end{abstract}

\maketitle

\medskip

\noindent
{\bf Classification AMS 2010}: 32U15.

\medskip

\noindent
{\bf Keywords:} Fekete points, equilibrium measure, generic submanifold, analytic disc.

\tableofcontents

\section{Introduction}
The aim of this paper is to obtain an estimate on the rate of convergence of Fekete points on some compact sets toward the equilibrium state. In the view of possible applications, the class of compact sets that we consider is large enough and the required conditions for our compact sets are simple to check. Before introducing the general complex setting, let us discuss a simple but already important case of  Fekete points for a compact $K$ of the unit sphere $\Sb^n$ of $\R^{n+1}.$ 

For each $k\in \N,$ let $\mathcal{P}_k(K)$ be the real vector space of all real polynomials of degree at most $k$ in $(n+1)$ variables restricted to $K.$ Let $N_k$ be the dimension of  $\mathcal{P}_k(K).$ Given a basis $S=\{s_1,\cdots,s_{N_k}\}$ of $\mathcal{P}_k(K),$ consider the Vandermonde determinant $\det S$ of $S$ defined by assigning each point $x=(x_1,\cdots,x_{N_k}) \in (\Sb^n)^{N_k}$ to 
$$\det S(x):= \det[s_j(x_l)]_ {1\le  j,l \le N_k}.$$   
\emph{A Fekete point} of order $k$ for $K$ is a point $x \in K^{N_k}$ maximizing the absolute function $\lvert\det S \rvert$ on $K^{N_k}.$ It is easy to see that this definition does not depend on the choice of the basis $S$. The study of Fekete points is motivated by the fact that they are good choices of points for the interpolation problem of functions by polynomials, see, e.g., \cite{BosLevenbergWaldron, Sloan_Womersley} and references therein for more information. For any Fekete point $x=(x_1,\cdots,x_{N_k})$ of order $k,$  the probability measure $\delta_x$ on $K$, defined by 
$$\delta_{x}:= \frac{1}{N_k} \sum_{j=1}^{N_k} \delta_{x_j},$$
is called the  Fekete measure of order $k$ associated with $x.$  We are interested in the distribution of  Fekete points of order $k$ as $k \rightarrow \infty.$ A natural way to formulate this question is to study the limit points of Fekete measures in the space of probability measures on $K.$ 

Let $\mu_{eq}$ be the equilibrium measure of $(K,0)$ which is defined in Section \ref{sec_regularity}. When $K=\Sb^n,$ the measure $\mu_{eq}$ is simply the normalized volume form on $\Sb^n$  induced by the Euclidean metric on $\R^{n+1}.$ In this case,  J. Marzo and J. Ortega-Cerd\`a in \cite{Marzo_Ortega-Cerda_Equidistribution} prove that Fekete measures of order $k$ converge weakly to $\mu_{eq}$ as $k \rightarrow \infty.$ In general, a recent result of  R. Berman, S. Boucksom and D. Witt Nystr{\"o}m in \cite{BoucksomBermanWitt} shows that the weak convergence also holds for any compact $K$ of $\Sb^n$ which is non-pluripolar in the natural complexification $\Sb^n_{\C}$   of $\Sb^n.$ In fact, this result holds in a general setting of Fekete points
associated with a big line bundle over a compact complex manifold. Also in this general setting, Dinh, Ma and Nguyen \cite{DinhMaVietanh} introduced a notion of $(\mathcal{C}^{\alpha}, \mathcal{C}^{\alpha'})$-regularity and obtained a precise estimate
on the speed of convergence of Fekete points when the compact $K$ satisfies this property. We will show that such a property holds for  the closures $K$ of open subsets of $\Sb^n$ with nondegenerate piecewise smooth boundary (see Definition \ref{def_piecewisesmooth}).  As a consequence, we will  have the following.

 \begin{theorem} \label{the_sphere} Let $K$ be the closure of an open subset of $\Sb^n$ with nondegenerate piecewise smooth boundary. For every $\epsilon\in (0,1),$ there is a positive constant $c_{\epsilon}$ independent of $k\in \N$ such that for any Fekete measure $\mu_k$ of order $k$ of $K,$ we have  
 \begin{align} \label{speed_sphere}
\dist_{1}(\mu_k, \mu_{eq}) \le c_{\epsilon} k^{-1/72+ \epsilon}.
\end{align} 
\end{theorem}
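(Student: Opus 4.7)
The natural strategy is to deduce Theorem~\ref{the_sphere} from the general estimate---the heart of the paper---on Fekete points for nondegenerate $\mathcal{C}^5$-piecewise submanifolds of compact complex projective manifolds. For this one needs to place $\Sb^n$ inside a suitable complex projective manifold so that real polynomials of degree at most $k$ on $\Sb^n$ become (real) sections of the $k$-th power of a positive line bundle.

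The first step is to complexify. Writing $\Sb^n=\{x_1^2+\cdots+x_{n+1}^2=1\}\subset\R^{n+1}$ and extending to complex coordinates yields the affine complex quadric $Q_\C=\{z_1^2+\cdots+z_{n+1}^2=1\}\subset\C^{n+1}$. Let $X\subset\P^{n+1}$ be its projective closure, a smooth quadric hypersurface of complex dimension $n$, and take $L:=\mathcal{O}_{\P^{n+1}}(1)|_X$, which is ample and hence positive. Restriction from $\P^{n+1}$ identifies the real sections of $L^k$ with $\mathcal{P}_k(\Sb^n)$, so the Vandermonde determinants and Fekete points appearing in the statement coincide with those associated to the triple $(X,L,K)$ in the general complex framework.

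It remains to check the hypotheses of the general theorem. The assumption that $K$ is the closure of an open subset of $\Sb^n$ with nondegenerate piecewise smooth boundary guarantees that $K$ is a nondegenerate $\mathcal{C}^5$-piecewise submanifold of $X$ in the sense of Definition~\ref{def_piecewisesmooth}. For the tangential condition, $\Sb^n$ is a totally real submanifold of $X$ of real dimension $n$ equal to the complex dimension of $X$, so at every $p\in\Sb^n$ one has $T_p\Sb^n\oplus J(T_p\Sb^n)=T_pX$ as real vector spaces. Consequently $T_p\Sb^n$ generates $T_pX$ as a complex vector space and cannot be contained in any complex hyperplane; since every top-dimensional regular point of $K$ satisfies $T_pK=T_p\Sb^n$, the same holds for $K$.

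With both hypotheses verified, the paper's general theorem applies and produces (\ref{speed_sphere}). The real obstacle lies not in this reduction but in the general theorem itself, whose proof is the bulk of the paper: one has to establish a quantitative $(\mathcal{C}^\alpha,\mathcal{C}^{\alpha'})$-regularity for nondegenerate piecewise real submanifolds, which is where Cauchy--Riemann techniques enter through the construction of families of analytic discs with boundaries on $K$ and a controlled resolution of Bishop's equation. The exponent $1/72$ in (\ref{speed_sphere}) is a concrete by-product of the chain of Hölder estimates arising there, combined with the Dinh--Ma--Nguyen bound expressed in terms of the regularity exponents $\alpha,\alpha'$.
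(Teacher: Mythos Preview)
Your proposal is correct and follows essentially the same route as the paper: complexify $\Sb^n$ to the smooth quadric $X\subset\P^{n+1}$, take $L=\mathcal{O}(1)|_X$ and $\phi=0$, verify that $K$ is generic (totally real of maximal dimension), and invoke Theorem~\ref{the2} with $\gamma=1$ and $\alpha=1-\epsilon$ so that $\beta=\alpha/(48+24\alpha)\to 1/72$. The paper adds only one small explicit step you gloss over, namely the non-pluripolarity argument ensuring $\dim_\R\mathcal{P}_k(K)=\dim_\R\mathcal{P}_k(\Sb^n)$ so that a common basis can be used for both Vandermonde determinants.
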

Recall that for any two probability measures $\mu,\mu'$ on a compact differentiable manifold $X$ and a real number $\gamma>0,$ define
$$\dist_{\gamma}(\mu,\mu'):= \sup_{\|v\|_{\mathcal{C}^{\gamma}} \le 1} \big| \langle \mu-\mu', v \rangle \big|,$$
where $v$ is a smooth real-valued function. This distance induces the weak topology on the space of probability measures on $X.$ For two positive numbers $\gamma, \gamma'$ with $\gamma < \gamma',$ the distances $\dist_{\gamma}$ and $\dist_{\gamma'}$ are related by the inequalities
$$\dist_{\gamma'} \le \dist_{\gamma} \le c \dist_{\gamma'}^{\gamma/\gamma'},$$
for some positive constant $c,$ see \cite{DinhSibony_Pk_superpotential,Triebel}. Note that $\dist_1(\mu,\mu')$ is equivalent to the 
 Kantorovich-Wasserstein distance. We have a better estimate when $K=\Sb^n.$
 
\begin{theorem}\label{the_sphere2} For every $\epsilon\in (0,1),$ there is a positive constant $c_{\epsilon}$ depending only on $(n,\epsilon)$  such that for any Fekete measure $\mu_k$ of order $k$ of $\Sb^n,$ we have  
 \begin{align} \label{speed_sphere2}
\dist_{1}(\mu_k, \mu_{eq}) \le c_{\epsilon} k^{-1/36+ \epsilon}.
\end{align} 
 \end{theorem}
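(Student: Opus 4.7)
The plan is to apply the same Dinh-Ma-Nguyen quantitative framework as in the proof of Theorem \ref{the_sphere}, but with upgraded $(\mathcal{C}^{\alpha},\mathcal{C}^{\alpha'})$-regularity exponents for $\Sb^n$; the doubling of these exponents will mechanically turn the rate $k^{-1/72+\epsilon}$ into $k^{-1/36+\epsilon}$. First I would observe that $\Sb^n\subset\R^{n+1}\subset\C^{n+1}$ is a real-analytic compact submanifold of its complexification $\Sb^n_{\C}=\{z_1^2+\cdots+z_{n+1}^2=1\}\subset\C^{n+1}$ and, since $\dim_{\R}\Sb^n = n = \dim_{\C}\Sb^n_{\C}$, it is maximally totally real at every point. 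In particular $T_p\Sb^n$ is never contained in a complex hyperplane of $T_p\Sb^n_{\C}$, so the hypotheses of Theorem \ref{the_sphere} are satisfied and the general framework applies, yielding $\dist_1(\mu_k,\mu_{eq})\le c_\epsilon k^{-1/72+\epsilon}$ as a first bound.

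The improvement to $k^{-1/36+\epsilon}$ will come from the fact that the factor-of-$2$ loss in the regularity exponents appearing in Theorem \ref{the_sphere} is entirely due to the piecewise-smooth boundary of $K$: the analytic-disc construction used to extend plurisubharmonic functions on $K$ to Hölder functions on a neighborhood in $X$ degrades near $\partial K$, because the discs attached to $K$ must be allowed to partly leave $K$ there. For $K=\Sb^n$ there is no boundary; every point is an interior point of a smooth closed totally real submanifold, so the attached disc family can be chosen uniformly and the Hölder extension retains its full exponent. Plugging the doubled values of $\alpha,\alpha'$ into the Dinh-Ma-Nguyen quantitative estimate and running the remainder of the proof of Theorem \ref{the_sphere} verbatim yields \eqref{speed_sphere2}. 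The constant $c_\epsilon$ depends only on $(n,\epsilon)$ because the disc family, and hence every geometric constant in the argument, is invariant under the isometric action of $\mathrm{O}(n+1)$ on $\Sb^n$.

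The main difficulty I anticipate is making the gain of a factor of $2$ rigorous: one must isolate precisely where, in the proof of Theorem \ref{the_sphere}, the boundary $\partial K$ is responsible for halving $\alpha$ and $\alpha'$, and verify that the entirety of this loss disappears in the smooth boundary-less case of $\Sb^n$. This is a quantitative refinement of the CR-geometric ingredients rather than a new analytic idea; the remaining steps, including the Bernstein--Markov-type inequalities and the final volume/determinant comparison, are identical to those used for Theorem \ref{the_sphere}.
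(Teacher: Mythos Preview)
Your proposal is correct and follows essentially the same route as the paper: embed $\Sb^n$ as a compact generic (indeed totally real) submanifold without singularity in $\Sb^n_{\C}$, then invoke the no-singularity case of Theorem~\ref{the2} (equivalently, the $(\mathcal{C}^\alpha,\mathcal{C}^\alpha)$-regularity in Theorem~\ref{the1}) with $\gamma=1$ and $\alpha\to 1^-$ to get $\beta\to 1/36$. The paper pinpoints the factor-of-$2$ gain exactly where you expect it, namely in Proposition~\ref{pro_familydiscK}: in the boundaryless case the attached disc can be chosen with $|1-z^*|\le c_0\delta$ rather than $|1-z^*|\le\sqrt{c_0\delta}$, and it is this square root (arising from the $(\R^+)^n$-model disc construction of Section~\ref{subsec_R+}) that halves the H\"older exponent in the singular case.
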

 It is  worth  mentioning also that when $K$ is the closure of an open subset of $\R^{n}$ with nondegenerate piecewise smooth boundary, one can define the Fekete points in $K$ and Fekete measures in the same way as above. The analogue of the inequality $(\ref{speed_sphere})$ also holds for this case. This is implied from our general result  by using the compact complexification $\P^{n}$ of $\R^{n},$ where $\P^{n}$ is the complex projective space of dimension $n.$ 


In order to prove Theorem \ref{the_sphere}, we will work in the following context of complex geometry.  Let $X$ be a compact $n$-dimensional complex projective manifold admitting  an ample line bundle $L$. Fix a  smooth Hermitian metric $h_0$ on $L$ with positive curvature. Let  $\omega_0$ be the first Chern form of $(L,h_0)$ which is a K\"ahler form on $X.$  For  $k \in \N,$ let $H^0(X, L^k)$ be the complex vector space of global holomorphic sections of $L^k.$ We also use $N_k$ to denote the dimension of $H^0(X,L^k)$. This will cause no ambiguity because we discuss essentially the general case from now on. Consider a basis 
$$S=(s_1,s_2,\cdots, s_{N_k})$$
 of $H^0(X,L^k)$ which can be seen to be a section of the  vector bundle $L^k \times \cdots \times L^k$ of rank $N_k$ over $X^{N_k}.$ The determinant  
$$\det S(p):= \det[s_i(p_j)]_{1\le i,j \le N_k}$$
 with $p=(p_1,\cdots, p_{N_k}) \in X^{N_k}$ defines a section of the determinant line bundle $L^{\boxtimes N_k}$ of the last bundle. The metric $h_0$ induces naturally a metric on $L^{\boxtimes N_k}$ . Denote by $|\det S|$ the norm of $\det S$ measured by this natural metric on $L^{\boxtimes N_k}.$  

Let $K$ be a compact subset of $X.$ Let $\phi$ be a continuous function on $K.$  \emph{The weighted Vandermonde determinant} $|\det S|_{\phi}$ at a point   $p=(p_1,\cdots, p_{N_k}) \in K^{N_k},$ by definition, is
$$|\det S|_{\phi}(p):= |\det S (p)| e^{-k \phi(p_1)- \cdots - k \phi(p_{N_k})}.$$  
 A \emph{Fekete configuration of order $k$} associated with $(K,\phi)$ and $(L,h_0)$  is a point in  $K^{N_k}$ maximizing the above weighted Vandermonde determinant on $K^{N_k}.$ 
The associated probability measure
$$\frac{1}{N_k} \sum_{j=1}^{N_k}\delta_{p_j}$$
 on $K$ is called a \emph{Fekete measure of order $k.$}  

Recall that a convex polyhedron $K''$ in $\R^{M}$ with $M \in \N^*$ is the intersection of a finite number of closed half-spaces in $\R^{M}.$ Its dimension  is defined to be  the one of the smallest vector subspace of $\R^{M}$ containing it. Such subspace is said to support $K''$. We define the complementary polyhedron of $K''$ to be the complement of $K''$ in the vector subspace supporting it. That complementary polyhedron is clearly a finite union of convex ones.  

\begin{definition}\label{def_piecewisesmooth}
A subset $K$ of a real $M$-dimensional smooth manifold $Y$ is called a nondegenerate $\mathcal{C}^5$-piecewise submanifold of dimension $m$ if for every point $p \in K,$ there exists a local chart $(W_p,\Psi)$ of $Y$ such that $\Psi$ is a $\mathcal{C}^5$-diffeomorphism from $W_p$ to the unit ball of $\R^{M}$ and  $\Psi(K \cap W_p)$ is the intersection with the unit ball  of a finite union of  convex polyhedra of the same dimension $m.$ In particular, when $\Psi(K \cap W_p)$ is the intersection with the unit ball of a convex polyhedron of dimension $m$ or of the complementary polyhedron of a convex one of dimension $m$, we say that $K$ is a  $\mathcal{C}^5$ submanifold of dimension $m$ with nondegenerate piecewise boundary.
\end{definition} 
Let  $K$ be a nondegenerate $\mathcal{C}^5$-piecewise submanifold of dimension $m$ of some smooth manifold $Y.$  \emph{A regular point} of $K$ is a point of $K$ such that there exists an open neighborhood of it diffeomorphic to an open subset of $\R^m.$ The regular part of $K$ is the set of regular points of $K.$  \emph{The singularity of $K$} is the complement of the regular part of $K$ in $K.$

Now let $K$ be a nondegenerate $\mathcal{C}^5$-piecewise submanifold of $X.$ Since $X$ is a complex manifold, its real tangent spaces have a natural complex structure. We say that  $K$ is \emph{generic} in the sense of Cauchy-Riemann geometry if  the tangent space  at any regular point $p$ of  $K$ does not contain in a complex hyperplane of the (real) tangent space at $p$ of $X.$ One can see without difficulty that the dimension of a generic $K$ is at least $n.$ Here is our second main result.

\begin{theorem} \label{the2} Let $\alpha$ be  a real number in $(0,1).$ Let  $K$ be a compact generic nondegenerate $\mathcal{C}^5$-piecewise submanifold of $X$. Let $\phi$ be a function of H\"older class $\mathcal{C}^{\alpha}$ on $K.$ Then for every $0 < \gamma <2,$  there is a constant $c>0$ such that for every integer $k\ge 1$ and for every Fekete measure $\mu_k$ of order $k$ associated with $(K,\phi),$ we have
\begin{align} \label{convergence1}
\dist_{\gamma}\big(\mu_k, \mu_{eq}(K,\phi)\big) \le c k^{-\beta \gamma} (\log k )^{3 \beta \gamma},
\end{align}
where $\mu_{eq}(K,\phi)$ is the equilibrium measure of $(K,\phi)$ (see Definition \ref{def_equilibriummeasure}) and $\beta= \alpha/(48+24 \alpha)$. When $K$ has no singularity, the constant $\beta$ can be chosen to be $\alpha/(24+12 \alpha).$
\end{theorem}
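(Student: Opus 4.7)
The plan is to verify that every compact $K$ as in the statement is $(\mathcal{C}^{\alpha}, \mathcal{C}^{\alpha'})$-regular in the sense of \cite{DinhMaVietanh}, for a suitable exponent $\alpha'$ depending linearly on $\alpha$, and then to invoke the general convergence-rate estimate proved there. Concretely, this regularity asks for quantitative H\"older control of the extremal function
\[
V_{K,\psi}(x) := \sup\bigl\{u(x) : u \in \PSH(X,\omega_0),\ u\le \psi \text{ on } K\bigr\},
\]
which should belong to $\mathcal{C}^{\alpha'}(X)$ with norm controlled linearly by $\|\psi\|_{\mathcal{C}^{\alpha}(K)}$, for every $\psi \in \mathcal{C}^{\alpha}(K)$. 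Once this is established, the exponent $\beta$ announced in the theorem together with the $(\log k)^{3\beta\gamma}$ correction is produced directly by the Dinh-Ma-Nguyen argument and an interpolation between the distances $\dist_\gamma$ for varying $\gamma$, so that all that remains is to exhibit $\alpha' = \alpha/(24+12\alpha)$ (resp.\ $\alpha/(48+24\alpha)$) in the corresponding cases.

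The core technical step is an analytic disc construction. Fix any regular point $p \in K$. The genericity hypothesis says that $T_pK$ generates the complex tangent space $T_pX$ as a complex vector space; this is precisely the classical condition allowing one to solve Bishop's equation with boundary on $K$. Exploiting the $\mathcal{C}^5$-smoothness of $K$ near $p$, one obtains a parameter family of holomorphic discs
\[
\varphi_\zeta : \overline{\D} \to X, \quad \varphi_\zeta(\partial\D) \subset K,
\]
depending $\mathcal{C}^3$-smoothly on $\zeta$, with the property that $\zeta \mapsto \varphi_\zeta(0)$ is a submersion onto an open neighborhood of $p$. For any admissible $u$ in the sup defining $V_{K,\psi}$, the function $u\circ\varphi_\zeta$ is subharmonic up to a uniformly bounded additive correction produced by $\omega_0$, so that
\[
u(\varphi_\zeta(0)) \le \frac{1}{2\pi}\int_0^{2\pi} \psi(\varphi_\zeta(e^{i\theta}))\, d\theta + C.
\]
The right-hand side is H\"older in $\zeta$ with exponent $\alpha$ because $\psi \in \mathcal{C}^{\alpha}(K)$ and $\zeta \mapsto \varphi_\zeta$ is smooth enough. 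Taking the supremum over $u$ and composing with the inverse of the submersion yields a local H\"older modulus for $V_{K,\psi}$ on a neighborhood of $p$. The precise loss comes from the boundedness of the Hilbert transform on H\"older spaces, which governs the solution of Bishop's equation, combined with the $\mathcal{C}^3$-parameter dependence; this gives the denominator $24+12\alpha$ in the singularity-free case.

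The principal obstacle is the handling of the singular locus of $K$, where the disc family above degenerates. My plan is to combine the regular-part estimate with a cut-off procedure adapted to the piecewise $\mathcal{C}^5$ structure: over each local chart, extend the relevant convex polyhedron slightly across its boundary, solve Bishop's equation for the enlarged piece, and paste the local estimates by a regularized maximum. This costs a factor two in the H\"older exponent and accounts for the passage from the denominator $24+12\alpha$ to $48+24\alpha$. The delicate point is to obtain quantitative estimates for Bishop's equation that are uniform up to the boundary strata of the piecewise structure, for which I would rely on standard CR-geometry techniques \emph{\`a la} Tumanov; verifying that only $\mathcal{C}^5$-regularity of $K$ is enough to yield the required $\mathcal{C}^3$-dependence of the discs on parameters is where most of the technical work lies, and is what ultimately fixes the numerology of $\beta$.
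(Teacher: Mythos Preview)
Your overall architecture---prove $(\mathcal{C}^{\alpha},\mathcal{C}^{\alpha'})$-regularity of $K$ and then invoke \cite[Th.~1.5]{DinhMaVietanh}---matches the paper's, but the execution has two genuine gaps. First, you have misread the numerology. The denominator $24+12\alpha'$ is entirely internal to the Dinh--Ma--Nguyen theorem; it has nothing to do with Hilbert-transform bounds in Bishop's equation. What must actually be proved is that $K$ is $(\mathcal{C}^{\alpha},\mathcal{C}^{\alpha})$-regular in the smooth case and $(\mathcal{C}^{\alpha},\mathcal{C}^{\alpha/2})$-regular in general; plugging $\alpha'=\alpha$ (resp.\ $\alpha'=\alpha/2$) into $\beta=\alpha'/(24+12\alpha')$ then yields the announced $\beta$. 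Your claim that ``all that remains is to exhibit $\alpha'=\alpha/(24+12\alpha)$'' confuses $\alpha'$ with $\beta$.

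Second, and more seriously, your disc family does not exist in the main case $\dim_{\R}K=n$. A generic $n$-dimensional $K$ is totally real, and any holomorphic disc with $\varphi(\partial\D)\subset K$ is then constant (already for $K=\R^n\subset\C^n$: $\Im\varphi$ is harmonic on $\D$ and vanishes on $\partial\D$, hence identically). So a family whose centres submerse onto an open subset of $X$ cannot be produced, and the mean-value argument collapses. The paper's device is different: for each $p$ near $K$ with nearest point $p_0\in K$ and $\delta=\dist(p,p_0)$, it builds a disc $f$ that is only \emph{partially} attached, $f([e^{-i\theta_0},e^{i\theta_0}])\subset K$, with $f(1)$ within $c_0\delta$ of $p_0$ and $f(z^*)=p$ for some interior $z^*$ satisfying $|1-z^*|\le c_0\delta$ (resp.\ $\sqrt{c_0\delta}$ near a corner). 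A one-variable harmonic-majorant lemma on $\D$ (not the mean value at the centre) then gives $\psi(p)\le\phi(p_0)+C|1-z^*|^{\alpha}$ for every competitor $\psi$, hence $\phi_K(p)\le\phi(p_0)+C\delta^{\alpha}$ (resp.\ $C\delta^{\alpha/2}$). This pins $\phi_K$ below the $\mathcal{C}^{\alpha}$ (resp.\ $\mathcal{C}^{\alpha/2}$) McShane-type extension $\tilde\phi$ of $\phi$ to $X$, whence $\phi_K^*=\tilde\phi_X\in\mathcal{C}^{\alpha'}$ by the already-known regularity of $X$. The halving of the exponent near singularities is therefore not a gluing loss as you suggest, but comes from the disc itself: the model attached to $(\R^+)^n$ is, in effect, the square of one attached to $\R^n$, forcing $|1-z^*|\sim\sqrt{\delta}$ rather than $\sim\delta$.
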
 
In general, when $K$ is an arbitrary non-pluripolar compact subset of $X$ and $\phi \in \mathcal{C}^0(K),$ Boucksom, Berman and Witt Nystr\"om  in  \cite{BoucksomBermanWitt} proved that $\mu_k$ converges weakly to $\mu_{eq}$. Using a different technique,  Lev and Ortega-Cerd\`a in \cite{LevOrtegaCerda} obtained an optimal speed for the $\dist_1$ provided that $K=X$ and $\phi$ is smooth $\omega_0$-strictly p.s.h. and the metric $e^{-2\phi}h_0$ is strictly positive. Very recently, Dinh, Ma and Nguyen in \cite{DinhMaVietanh} introduced the notion of $(\mathcal{C}^{\alpha},\mathcal{C}^{\alpha'})$-regularity and proved an estimate for the rate of convergence for every compact $K$ satisfying this property, see Theorem \ref{the_theorem15DMN}. In particular, they showed that the closure of an open subset of $X$ with smooth boundary enjoys such regularity.  

In this work, we will prove that the compact $K$ in Theorem \ref{the2} satisfies the regularity mentioned above. Hence Theorem \ref{the2} will follow immediately. For the proof, we develop ideas from \cite[Th. 2.7]{DinhMaVietanh}. The key point is to construct families of analytic discs partly attached to $K$ in $X$ with useful properties. These families will allow us to reduce the question to the case of dimension one. Although there are plenty of works concerning families of analytics discs, it seems that there is no result which can be used directly for our problem. We will establish  a  Bishop-type equation and prove that it has a (unique) solution which suits our purposes. For the reader's convenience,  a self-contained proof will be given. The construction is inspired by the work of   Merker and Porten in \cite{MerkerPorten, MerkerPorten2}. We also underline that the case where the singularity of $K$ is nonempty requires much more sophisticated technical arguments than the case without singularity although the ideas used in the both situations are similar.   

In Section \ref{sec_regularity}, we prove the aforementioned regularity property of $K$ by admitting the existence of  special families of analytic discs whose proof is postponed until Section \ref{seclocal} and \ref{secgeneral}.   In Section \ref{seclocal}, we prove the existence of the above families of analytic discs in the simplest case  by constructing special analytic discs partly attached to $\R^n$ or $(\R^+)^n$  in $\C^n$. In Section \ref{secgeneral},  we show that the required families can be obtained as small deformations of the previous ones constructed in Section \ref{seclocal}. 
 
\vskip 0.5cm
\noindent
{\bf Acknowledgement.} The author  would like to thank Tien-Cuong Dinh for his valuable help during the preparation of this paper. He also would like to express his gratitude to  Junyan Cao and Jo\"el Merker for fruitful discussions.  

\section{$(\mathcal{C}^{\alpha},\mathcal{C}^{\alpha'})$-regularity of generic submanifolds} \label{sec_regularity}
We first recall some definitions. A function $\psi: X \rightarrow \R \cup \{-\infty\}$ is called \emph{quasi-plurisubharmonic} (quasi-p.s.h. for short) if it is locally the sum of a plurisubharmonic function and a smooth one. A quasi-p.s.h. function is called \emph{$\omega_0$-p.s.h.} if $dd^c \psi +\omega_0 \ge 0.$  Let $K$ be a compact subset of $X$ and $\phi$ be a real-valued continuous function on $K.$ The pair  $(K,\phi)$ is called \emph{a weighted compact subset} of $X$ and $\phi$ is called a \emph{weight} on $K.$  The \emph{equilibrium weight} associated with $(K,\phi)$ is the upper semi-continuous regularization $\phi_K^*$ of the function 
$$\phi_K:= \sup\{ \psi(z): \psi  \text{ $\omega_0$-p.s.h., } \psi \le \phi \text{ on $K$}\}.$$
Since the constant function $ - \max_K |\phi|$ is a $\omega_0$-p.s.h. and bounded above by $\phi$ on $K,$ we have $\phi_K \ge - \max_K|\phi|.$ It follows that $\phi_K$ is bounded from below. Recall that $K$ is said to be pluripolar if it is locally contained in $\{\psi=-\infty\}$ for some  (local) p.s.h. function $\psi,$ otherwise we say that $K$ is non-pluripolar. It is well-known that  $\phi_K$ is bounded from above if and only if $K$ is non-pluripolar. In this case, $\phi^*_K$ is a bounded $\omega_0$-p.s.h. function. The Monge-Amp\`ere measure $(\omega_0+ dd^c \phi^*_K)^n$  is hence well-defined. Its mass on $X$ equals $\int_X \omega_0^n$ by Stokes' theorem.   The \emph{equilibrium measure} of $(K,\phi)$ is the normalized Monge-Amp\`ere measure defined by 
\begin{align} \label{def_equilibriummeasure}
\mu_{eq}(K,\phi):= \frac{\big(dd^c(\phi_K^*)+\omega_0\big)^n}{\int_X \omega_0^n} \cdot
\end{align}
Recall that $\mu_{eq}(K,\phi)$ is a probability measure supported on $K.$  
When $K$ is an arbitrary compact generic nondegenerate $\mathcal{C}^5$-piecewise submanifold of $X$, it is a  direct consequence of Theorem \ref{the1} below that $K$ is non-pluripolar.

Fix a Riemannian metric on $X.$ For $p\in X$ and $r>0,$ let $B_X(p,r)$ be the ball centered at $p$ of radius $r$ of $X.$ Put  $B^*_X(p,r):= B_X(p,r)\backslash \{p\}.$  Recall that for $0 < \alpha <1,$ $\mathcal{C}^{\alpha}(X)$ is the space of real functions of H\"older class $\mathcal{C}^{\alpha}$ on $X$ with the norm defined by 
$$\|\phi\|_{\mathcal{C}^{\alpha}(X)}:= \sup_{p\in X} |\phi(p)|+  \sup_{p \not = p', p ,p'\in X} \frac{|\phi(p)- \phi(p')|}{\dist(p,p')^{\alpha}},$$
where $\dist$ denotes the distance on $X.$ The space $\mathcal{C}^{\alpha}(K)$ is defined similarly.

\begin{definition} For $\alpha \in (0,1)$ and $\alpha' \in (0,1),$ a  non-pluripolar compact $K$ is said to be $(\mathcal{C}^{\alpha}, \mathcal{C}^{\alpha'})$-regular if 
for any positive constant $C,$ the set $\{\phi_K: \phi \in \mathcal{C}^{\alpha}(K) \text{ and } \|\phi\|_{\mathcal{C}^{\alpha}(K)} \le C\}$ is a bounded subset of $\mathcal{C}^{\alpha'}(X).$
\end{definition}
By definition, if $K$ is $(\mathcal{C}^{\alpha}, \mathcal{C}^{\alpha'})$-regular, for any H\"older continuous function $\phi$ of order $\alpha>0$ on $K,$  we have $\phi^*_K= \phi_K$ because the latter is also H\"older continuous.  The notion of  $(\mathcal{C}^{\alpha}, \mathcal{C}^{\alpha'})$-regularity is essential in our work. The reason is the following result. 
\begin{theorem} \label{the_theorem15DMN} (\cite[Th. 1.5]{DinhMaVietanh}) Let $K$ be a non-pluripolar compact subset of $X.$ Let $\alpha \in (0,1),$ $\alpha' \in (0,1)$ and $\gamma \in (0,2].$ Assume that $K$ is $(\mathcal{C}^{\alpha}, \mathcal{C}^{\alpha'})$-regular. Let $\phi$ be a $\mathcal{C}^{\alpha}$ real-valued function on $K.$ Then, there is $c>0$ such that for every $k>1,$ we have 
$$\dist_{\gamma}\big(\mu_k, \mu_{eq}(K,\phi)\big) \le c k^{-\beta \gamma} (\log k)^{3 \beta \gamma},$$
with $\beta= \alpha'/ (24+12 \alpha').$
\end{theorem}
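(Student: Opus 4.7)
The plan is to estimate $\dist_{\gamma}(\mu_k,\mu_{eq}(K,\phi))$ by pairing $\mu_k-\mu_{eq}(K,\phi)$ with smooth test functions $v$ of unit $\mathcal{C}^{\gamma}$ norm, and using the standard variational characterization of Fekete and equilibrium measures. Concretely, introduce the normalized log-Vandermonde functional
\begin{equation*}
L_k(\psi):=\frac{1}{kN_k}\log\sup_{p\in K^{N_k}}|\det S|_\psi(p)
\end{equation*}
and the Monge–Amp\`ere energy $E(\psi)$ associated with $\psi_K^*$, so that, for the perturbed weights $\psi_t:=\phi+tv$, one has
$\partial_t L_k(\psi_t)|_{t=0}=-\langle v,\mu_k\rangle$ and $\partial_t E(\psi_t)|_{t=0}=-\langle v,\mu_{eq}(K,\phi)\rangle$ (up to a fixed normalization). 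Both $L_k$ and $E$ are concave in $t$, so a three-point concavity argument bounds the difference of the two derivatives at $t=0$ in terms of $\tau^{-1}\sup_{|t|\le\tau}\bigl|L_k(\psi_t)-E(\psi_t)\bigr|$ for any chosen scale $\tau>0$.

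The heart of the matter is therefore a quantitative comparison $|L_k(\psi)-E(\psi)|=O(k^{-\alpha'/2}(\log k)^{\,*})$, uniform over weights $\psi$ whose equilibrium potential $\psi_K^*$ is $\mathcal{C}^{\alpha'}$ on $X$ with controlled norm. This is exactly what $(\mathcal{C}^{\alpha},\mathcal{C}^{\alpha'})$-regularity provides: applied to the weight $\psi=\phi+tv_\epsilon$ (the perturbed weight obtained after an initial smoothing of $v$ described below), it converts the H\"older control of $\psi$ on $K$ into a uniform H\"older control of $\psi_K^*$ globally on $X$. One then invokes Bergman-kernel asymptotics for H\"older quasi-p.s.h. weights (regularizing $\psi_K^*$ by a $\partial\bar\partial$-heat-type convolution, comparing the truncated log-Bergman density with the equilibrium density, and integrating) to obtain the desired rate for $|L_k(\psi)-E(\psi)|$.

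To access this for $\gamma\in(0,2]$ while the hypothesis is only in H\"older class $\alpha'$, one first regularizes the test function: choose a smoothing kernel of bandwidth $\epsilon$ and write $v=v_\epsilon+(v-v_\epsilon)$ with $\|v-v_\epsilon\|_{\mathcal{C}^0}\lesssim\epsilon^{\gamma}$ and $\|v_\epsilon\|_{\mathcal{C}^{\alpha'}}\lesssim\epsilon^{\gamma-\alpha'}$ (handling cases $\gamma\le\alpha'$ and $\gamma>\alpha'$ separately in the standard way). The rough piece contributes $O(\epsilon^{\gamma})$ to the pairing with $\mu_k-\mu_{eq}$. The smooth piece is treated by the concavity scheme above, using the Bergman-kernel estimate applied to $\phi+tv_\epsilon$ for $|t|\le\tau$, whose equilibrium potential has $\mathcal{C}^{\alpha'}$-norm of order $\tau\,\epsilon^{\gamma-\alpha'}$. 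Optimizing jointly over $\epsilon$, $\tau$, and the additional internal regularization scale hidden in the Bergman kernel step yields the claimed exponent $\beta\gamma$ with $\beta=\alpha'/(24+12\alpha')$; the $(\log k)^{3\beta\gamma}$ factor is the residue of the logarithmic losses coming from each regularization step.

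The main obstacle, in my view, is the quantitative Bergman kernel comparison for H\"older weights with an explicit constant depending only on the $\mathcal{C}^{\alpha'}$-norm of $\psi_K^*$. The na\"ive $C^2$ Bergman asymptotics must be replaced by an asymptotic valid down to H\"older regularity, and tracked with uniform constants as the weight varies along the one-parameter family $\psi_t$. This is where the peculiar denominator $24+12\alpha'$ arises: it is the bookkeeping cost of running the standard regularization–comparison cycle twice (once to trade $\mathcal{C}^{\alpha'}$ for smoothness via convolution, once to trade the $k^{-\alpha'/2}$-type Bergman bound into a bound on $\dist_\gamma$ through the concavity lemma). Once this comparison is in place, the rest of the argument is a careful but routine bookkeeping of the three parameters $(\epsilon,\tau,k)$.
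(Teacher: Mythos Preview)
This theorem is not proved in the present paper; it is quoted verbatim from \cite[Th.~1.5]{DinhMaVietanh} and used as a black box to derive Theorem~\ref{the2} from Theorem~\ref{the1}. There is therefore no proof in the paper to compare your proposal against.

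That said, your sketch does follow the broad architecture of the argument in \cite{DinhMaVietanh}: the variational characterization of Fekete configurations via the log-Vandermonde functional $L_k$, the Monge--Amp\`ere energy $E$, a concavity/three-point lemma to pass from $|L_k-E|$ to a bound on $\langle v,\mu_k-\mu_{eq}\rangle$, and a quantitative Bergman-kernel comparison for H\"older weights. As you correctly identify, the delicate step is the Bergman-kernel asymptotic with uniform dependence on the $\mathcal{C}^{\alpha'}$-norm of the equilibrium potential; this is where $(\mathcal{C}^{\alpha},\mathcal{C}^{\alpha'})$-regularity enters and where the exponent $\alpha'/(24+12\alpha')$ is generated. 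Your outline is plausible at this level of detail, but a genuine proof requires the precise Bernstein--Markov-type estimates and the careful optimization of all regularization scales carried out in \cite{DinhMaVietanh}; what you have written is a roadmap rather than a proof.
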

Theorem \ref{the2} is a direct  consequence of  Theorem \ref{the_theorem15DMN} and Theorem \ref{the1} below.

\begin{theorem} \label{the1} Let $\alpha$ be an arbitrary number in $(0,1).$ Then any compact generic nondegenerate $\mathcal{C}^5$-piecewise submanifold of $X$ is $(\mathcal{C}^{\alpha}, \mathcal{C}^{\alpha/2})$-regular. Particularly, when $K$ has no singularity,  $K$ is $(\mathcal{C}^{\alpha}, \mathcal{C}^{\alpha})$-regular.
\end{theorem}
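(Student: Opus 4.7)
The plan is to deduce $(\mathcal{C}^\alpha,\mathcal{C}^{\alpha/2})$-regularity (or $(\mathcal{C}^\alpha,\mathcal{C}^\alpha)$ in the smooth case) from a local estimate relating values of $\omega_0$-p.s.h.\ functions to their boundary values on short arcs attached to $K$. The transport between the two is carried out via the families of analytic discs whose existence is postponed to Sections \ref{seclocal}--\ref{secgeneral}. Throughout, fix $\phi \in \mathcal{C}^\alpha(K)$ with $\|\phi\|_{\mathcal{C}^\alpha(K)}\le C$. Since the disc families contain non-constant discs with a boundary arc in $K$, $K$ is non-pluripolar, so $\phi_K$ is bounded (and $\phi^*_K=\phi_K$ once H\"older continuity is established). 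Hence it suffices to bound $\|\phi_K\|_{\mathcal{C}^{\alpha'}(X)}$ uniformly in $\phi$ with $\alpha'$ as stated.

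The core local step is as follows. For every $p$ in a small neighborhood $U$ of $K$ in $X$, I would use the announced construction to produce a holomorphic disc $f_p\colon\overline{\D}\to X$ with $f_p(0)=p$ and a fixed-length arc $\Gamma\subset\partial\D$ satisfying $f_p(\Gamma)\subset K$, such that $f_p$ has $\mathcal{C}^1$-norm bounded uniformly in $p$ and, crucially, such that the map $p\mapsto f_p|_\Gamma \in \mathcal{C}^0(\Gamma,X)$ is H\"older continuous with exponent $s$, where $s=1$ when $K$ is a submanifold (no singularity) and $s=1/2$ in the piecewise case. For any $\omega_0$-p.s.h.\ function $\psi\le\phi$ on $K$, the function $\psi\circ f_p + \rho\circ f_p$ is subharmonic on $\D$ for a fixed local potential $\rho$ of $\omega_0$, and is upper-bounded by $\phi\circ f_p + \rho\circ f_p$ on $\Gamma$. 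The sub-mean-value inequality on $\D$ therefore yields a bound
\[
\psi(p)\le c_1\int_\Gamma \phi\circ f_p\,d\sigma + c_2,
\]
with $c_1,c_2$ independent of $\psi$. Taking supremum over admissible $\psi$, the same bound holds for $\phi_K(p)$.

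Combining this with the lower bound $\phi_K\ge -\max_K|\phi|$ and comparing the inequality at two close points $p,p'\in U$, one gets
\[
|\phi_K(p)-\phi_K(p')|\le c_1\Bigl|\int_\Gamma(\phi\circ f_p-\phi\circ f_{p'})\,d\sigma\Bigr|
\le c_1\|\phi\|_{\mathcal{C}^\alpha(K)}\sup_{\zeta\in\Gamma}\dist(f_p(\zeta),f_{p'}(\zeta))^\alpha,
\]
and the H\"older dependence of the disc family gives the right-hand side $\lesssim \dist(p,p')^{\alpha s}$. This yields the desired exponent $\alpha$ (resp.\ $\alpha/2$) on $U$. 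To globalize, I would notice that on $X\setminus K$ any $\omega_0$-p.s.h.\ competitor is controlled by its boundary values on $\partial U$ via the maximum principle, so the extremal envelope $\phi_K$ inherits $\mathcal{C}^{\alpha s}$-regularity on all of $X$ (after passing to $\phi^*_K$, which then coincides with $\phi_K$).

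The main obstacle is entirely hidden in the italicized step: producing a family $\{f_p\}_{p\in U}$ of analytic discs, partly attached to $K$, which covers a full neighborhood of $K$, depends H\"older-continuously on $p$, and does so uniformly when $K$ has piecewise-smooth (but nondegenerate) boundary. The smooth case reduces to a standard Bishop-type equation on $\overline{\D}$ perturbed from the flat model $\R^n\subset\C^n$, solved in the appropriate H\"older space, which yields exponent $1$ in $p$. Near the singular locus one has to attach boundary arcs to a union of convex polyhedra modeled on $(\R^+)^n\subset\C^n$; the corresponding Bishop equation has to be set up with boundary values in such a piecewise target and solved with a loss of regularity that accounts exactly for the factor $1/2$ in the exponent. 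This is the technical heart of the paper and is the content of Sections \ref{seclocal} and \ref{secgeneral}; once it is granted, the argument above assembles Theorem \ref{the1}, and thereby Theorem \ref{the2} via Theorem \ref{the_theorem15DMN}.
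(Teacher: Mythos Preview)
There is a genuine logical gap in your comparison step. From the sub-mean-value estimate you obtain only a \emph{one-sided} bound
\[
\phi_K(p)\ \le\ c_1\int_\Gamma \phi\circ f_p\,d\sigma + c_2,
\]
and the analogous bound for $p'$. But from $\phi_K(p)\le A(p)$ and $\phi_K(p')\le A(p')$ together with a constant lower bound $\phi_K\ge -\max_K|\phi|$, you \emph{cannot} conclude $|\phi_K(p)-\phi_K(p')|\le c_1\bigl|\int_\Gamma(\phi\circ f_p-\phi\circ f_{p'})\,d\sigma\bigr|$. This inequality is simply false in general. The issue is structural: a Poisson-type upper bound gives no matching lower bound, because on the complementary arc $\partial\D\setminus\Gamma$ the competitor $\psi\circ f_p$ is controlled only by a crude constant, and that constant does not cancel when you subtract.

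The paper avoids this by never attempting a two-sided H\"older estimate directly. Instead it constructs an explicit $\mathcal{C}^{\alpha/2}$ (resp.\ $\mathcal{C}^\alpha$) extension $\tilde\phi$ of $\phi$ to $X$, invokes the already-known $(\mathcal{C}^{\alpha/2},\mathcal{C}^{\alpha/2})$-regularity of $X$ itself to conclude $\tilde\phi_X\in\mathcal{C}^{\alpha/2}$, and then reduces everything to the \emph{one-sided} inequality $\phi_K\le\tilde\phi$, which the disc argument is perfectly suited to deliver. The disc geometry is also different and essential: the paper places $p=f(z^*)$ with $z^*$ close to the boundary point $1$ where the attached arc is centered, so that as $\delta=\dist(p,K)\to 0$ one has $|1-z^*|\to 0$; Lemma~\ref{le_passagetogeneralcase3} then gives the sharp bound $\psi(z^*)\le\phi_1(1)+C|1-z^*|^\alpha$, and the half-exponent arises precisely from $|1-z^*|\lesssim\sqrt{\delta}$ in the singular case. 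Your setup with $f_p(0)=p$ at the center cannot capture this, since the harmonic measure of $\Gamma$ at $0$ is fixed and the non-attached contribution $c_2$ does not vanish as $p\to K$. Finally, note that what Sections~\ref{seclocal}--\ref{secgeneral} actually establish (Proposition~\ref{pro_familydiscK}) is a disc for each pair $(p_0,p)$ with control on $|1-z^*|$, not a family $p\mapsto f_p$ that is H\"older in $p$; the latter is neither proved nor needed.
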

When $K$ is of maximal real dimension, the regularity of $K$ can be improved,  see Remark \ref{re_dimK2n} for more details.

\begin{remark} \label{re_phanviduoptimal} Consider the case where $\dim_{\R} K=n.$ This is the case of our great interest. Then, the regularity of $K$ obtained in Theorem \ref{the1} is optimal. For simplicity, let take $n=1$ and $X=\P^1=\{[z_0:z_1]: (z_0,z_1)\in \C^2\backslash \{0\}\}.$  Let $\omega_0$ be the Fubini-Study form on $\P^1.$ Using the local coordinates $[z:1],$ $\P^1$ can be seen as the compactification of $\C$ with a point at infinity. An $\omega_0$-p.s.h. functions is equal to $\psi(z)- \frac{1}{2}\log(1+|z|^2)$ on $\C,$ where $\psi$ is a subharmonic function on $\C,$ such that the last difference is bounded above.  

Let $K=[-1,1].$ 
Choose $\phi \equiv 0$ on $K.$ Using \cite[Cor. 5.4.5]{Klimek}, we get $\phi_K(z)=\log |z+ \sqrt{z^2-1}|$ on $\C,$ where the square root is chosen such that 
$$ |z+ \sqrt{z^2-1}| \ge 1.$$
Comparing $\phi_K(z)$ with $0$ when $z$ is close to $1,$ one sees that $\phi_K \in \mathcal{C}^{1/2}(X) \backslash \mathcal{C}^{1/2 +\epsilon}(X)$ for any $\epsilon>0.$ In higher dimension, the same arguments also work for  $X= \P^n$, $K=[-1,1]^n \subset \C^n$ and $\phi \equiv 0.$   
\end{remark} 
 Before giving the proof of Theorem \ref{the1}, we need to recall the definition of analytic discs partly attached to a subset of $X$. Let $\D$ be the open unit disc in $\C.$ \emph{An analytic disc} $f$ in $X$ is a holomorphic mapping from $\D$ to $X$ which is continuous up to the boundary $\partial \D$ of $\D.$ For an interval $I \subset \partial \D,$ $f$ \emph{is said to be $I$-attached to a subset $E \subset X$} if $f(I) \subset E.$ In particular,  we say that $f$ is \emph{half-attached to  $E$} if  $f(\partial^+\D) \subset E,$ where $\partial^+\D= \{\xi \in  \partial \D: \Re \xi \ge 0\}.$  The crucial ingredient in the proof of Theorem \ref{the1} is Proposition \ref{pro_familydiscK} below which shows the existence of special families of analytic discs partly attached to $K$ in $X.$ Its proof will be given in Section \ref{secgeneral}. 

\begin{proposition}\label{pro_familydiscK}  There are  positive constants $c_0, r_0$ and $\theta_0 \in (0, \pi/2)$  such that for any $p_0 \in K$ and any $p\in B^*_X(p_0,r_0),$ there exist an open neighborhood $W_{p_0}$ of $p_0$ in $X$ independent of $p$ which is biholomorphic to the unit ball of $\C^n$ and a $\mathcal{C}^1$ analytic disc $f: \overline{\D} \rightarrow W_{p_0}$ such that $f$ is $[e^{-i\theta_0}, e^{i\theta_0}]$-attached to $K,$ $\dist(f(1),p_0)\le c_0 \delta$ with $\delta=\dist(p,p_0),$ $\|f\|_{\mathcal{C}^1} \le c_0$ and there is  $z^* \in \D$ so that $|1-z^*| \le \sqrt{c_0 \delta}$ and $f(z^*)=p.$ When $K$ has no singularity,  $z^*$ can be chosen so that  $|1-z^*| \le c_0\delta.$ 
\end{proposition}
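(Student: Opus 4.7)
The plan is to work in a local chart at $p_0$ where $K$ admits an explicit polyhedral model, construct the desired discs by hand in that model, and then transfer them to the true $K$ via a nonlinear Bishop-type fixed-point argument; the main difficulty lies in the singular case, which is what ultimately produces the square-root loss $|1-z^*|\le\sqrt{c_0\delta}$. Concretely, I would first fix the chart $W_{p_0}$ provided by Definition \ref{def_piecewisesmooth} and identify it with the unit ball of $\C^n$ so that $K \cap W_{p_0}$ becomes the intersection with the ball of a finite union of convex polyhedra in $\R^M$. Using the genericity of $K$ together with a $\C$-linear change of coordinates, I can arrange that one polyhedral piece containing $p_0$ is $\mathcal{C}^5$-close to a convex polyhedral cone inside the totally real subspace $\R^n \subset \C^n$. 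The problem thus reduces to the two flat model settings $K_0 = \R^n$ (at a regular point of $K$) and $K_0 = (\R^+)^n$ (at a typical corner of the singularity), together with their small $\mathcal{C}^5$-perturbations.

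In these flat model settings, I would construct $f_0 : \overline \D \to \C^n$ by explicit formulas. For $K_0 = \R$ in $\C$, a Cayley-type transform sends $\D$ to the upper half plane, where an affine ansatz sends an arc about $1$ into $\R$ and yields a disc with $f_0(1)$ close to $0$, $f_0(z^*) = p$ and linear relation $|1-z^*|\sim|p|$. For $K_0 = \R^+$, a linear ansatz cannot preserve positivity, and I would use instead a quadratic ansatz of the form $\xi \mapsto a(\xi-1)^2 + b$, which sends the arc into $\R^+$ and gives the square-root relation $|1-z^*|\sim\sqrt{|p|}$. Taking tensor products of these one-variable building blocks in the $n$ complex variables produces model discs for $K_0 = \R^n$ and $K_0 = (\R^+)^n$ with uniform $\mathcal{C}^1$ bounds, $f_0(1)$ close to $0$, and $f_0(z^*) = p$ for arbitrary $p$ in a small ball.

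To pass from $K_0$ to the actual $K$, I would seek the true disc as a holomorphic perturbation $f = f_0 + u$ and determine $u$ by enforcing $f([e^{-i\theta_0},e^{i\theta_0}]) \subset K$. Expressing the boundary values of $u$ through the Hilbert transform on $\D$ recasts the attachment condition as a nonlinear fixed-point equation $u = T(u)$ on a small ball in a H\"older space $\mathcal{C}^{1,\alpha}(\partial\D)$. The operator $T$ is built from the Hilbert transform and the $\mathcal{C}^5$ graphing functions of $K$ over its tangent model, and its Lipschitz constant is controlled by the chart radius $r_0$ and the local geometry of $K$. For $r_0$ small enough, $T$ is a contraction and Banach's fixed-point theorem yields a unique small solution $u$ with uniform $\mathcal{C}^1$ bounds; applying the implicit function theorem to $p \mapsto z^*(p)$ then transfers the estimates $|1-z^*|\le c_0\delta$ (regular case) and $|1-z^*|\le\sqrt{c_0\delta}$ (singular case) from the model to the true disc.

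The hard part will be the singular case. The boundary arc of $f$ must stay on a single polyhedral piece of $K$ as $p_0$ varies over the singular stratum and $p$ ranges over a full punctured ball, and near corners the faces meet at angles that can degenerate as one moves along the singularity. Keeping the Bishop operator uniformly contracting under such degenerations, while ensuring that $f_0(1)$ depends on $p_0$ with the correct rate and that $z^*$ lies in the open disc $\D$ rather than on the attached arc, is what forces the use of $\mathcal{C}^5$ regularity and is ultimately responsible for the extra $\sqrt{\delta}$ in the singular case.
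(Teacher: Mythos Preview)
Your overall architecture---local chart reducing to the models $\R^n$ and $(\R^+)^n$, explicit model discs, then a Bishop-type fixed-point perturbation followed by an inverse-function argument to hit $p$---is exactly the paper's approach. But there is a genuine gap in the singular step that you flag as ``the hard part'' without resolving.

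The Bishop equation you write down can only enforce that the perturbed disc is attached to the \emph{smooth graph} $K_h=\{(\mathbf{x},h(\mathbf{x})):\mathbf{x}\in B_n(0,1)\}$ extending the polyhedral piece, because the fixed-point equation encodes the equality $\Im f=h(\Re f)$ on the arc, not the inequality $\Re f\ge 0$. After perturbation the boundary component $U'(\xi)=\Re f(\xi)$ will in general leak out of $(\R^+)^n$, so attachment to $K$ itself is lost. The paper's remedy is to embed the model family $F'$ in a larger family $F'_{\boldsymbol{\tau}}$ carrying an extra control parameter $\boldsymbol{\tau}\in B_n(0,2)$; one solves the Bishop equation for every $\boldsymbol{\tau}$, then uses $\boldsymbol{\tau}$ (via a second implicit-function argument) to force the first tangential derivative $\partial_\theta U'_{\mathbf{z},t,\boldsymbol{\tau}}(1)$ to a prescribed value, after which a Taylor expansion in $\theta$ shows $U'\ge 0$ on a uniform arc. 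Your proposal contains no mechanism of this kind, and without it the contraction argument alone cannot guarantee the sign condition.

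Two smaller points. Your quadratic ansatz $\xi\mapsto a(\xi-1)^2+b$ does not send an arc of $\partial\D$ into $\R$: one has $(e^{i\theta}-1)^2=-4e^{i\theta}\sin^2(\theta/2)$, whose imaginary part is nonzero for $\theta\neq 0$. The paper builds the $(\R^+)$-model discs as $-\mathcal{T}_1u+iu$ with $u\in\mathcal{C}^\infty(\partial\D)$ vanishing on $\partial^+\D$ and with $\partial_xu(1),\ \partial_x\partial_yu(1)$ prescribed, which is precisely what makes the positivity argument work at second order in $\theta$. And ``tensor products'' of one-variable discs would give maps from $\D^n$; what is actually needed, and what the paper does, is a single disc $\D\to\C^n$ whose $n$ components share the variable $z$ but carry different scalar parameters.
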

We will also need the following lemma in complex dimension one.

\begin{lemma}\label{le_passagetogeneralcase3} Let $\theta_0 \in (0, \pi/2), \beta \in (0,1)$ and let $c>0$. Let $\psi$ be a subharmonic function on $\D$ and continuous on $\overline{\D}$.  Assume that 
\begin{align}\label{ine_giasutrenpsi}
\psi(e^{i\theta}) \le c|\theta|^{\beta} \quad \text{for} \quad \theta \in (-\theta_0,\theta_0) \quad \text{and} \quad  \sup_{\theta \in [-\pi,\pi]} \psi(e^{i\theta}) \le c.
\end{align}
Then, there exists a constant $C$ depending only on $(\theta_0,\beta,c)$ so that for any $z \in \D,$  we have
\begin{align*} 
\psi(z) \le  C |1-z|^{\beta}.
\end{align*}  
\end{lemma}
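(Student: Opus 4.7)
The plan is to use the Poisson integral representation. Since $\psi$ is subharmonic on $\D$ and continuous on $\overline{\D}$, the maximum principle gives
$$\psi(z) \le \frac{1}{2\pi} \int_{-\pi}^{\pi} P(z, e^{i\theta})\, \psi(e^{i\theta})\, d\theta, \qquad P(z, e^{i\theta}) = \frac{1-|z|^2}{|e^{i\theta}-z|^2}.$$
Set $\delta := |1-z|$. It suffices to prove the estimate when $\delta$ is sufficiently small, since for $\delta$ bounded below by a fixed positive constant the conclusion follows immediately from the global bound $\psi(e^{i\theta}) \le c$ in $(\ref{ine_giasutrenpsi})$. Accordingly, split the integration domain into the near part $I_1 := (-\theta_0, \theta_0)$ and the far part $I_2 := [-\pi,\pi] \setminus I_1$, and estimate each separately.

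For $\theta \in I_2$ and $\delta$ small, the triangle inequality gives $|e^{i\theta}-z| \ge |e^{i\theta}-1| - \delta \ge c(\theta_0) > 0$. The Poisson kernel on $I_2$ is therefore bounded by $C(\theta_0)(1-|z|^2) \le 2C(\theta_0)\delta$, and combining with the uniform bound $\psi(e^{i\theta}) \le c$ shows the $I_2$ contribution is $O(\delta)$, which is $O(\delta^\beta)$ since $\beta \le 1$ and $\delta \le 1$.

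The main term is the integral over $I_1$. Writing $z = re^{i\phi}$ and using the identity $|e^{i\theta}-z|^2 = (1-r)^2 + 4r \sin^2((\theta-\phi)/2)$, one obtains $|e^{i\theta}-z|^2 \ge c_1[(1-r)^2 + (\theta-\phi)^2]$ as $r \to 1$. Decomposing $1-z = \delta_1 + i\delta_2$, we have $1-r = \delta_1 + O(\delta^2)$ and $\phi = -\delta_2 + O(\delta^2)$, so $|e^{i\theta}-z|^2 \ge c_2[\delta_1^2 + (\theta+\delta_2)^2]$ for $\delta$ small. Since also $1-|z|^2 \le 2\delta_1$ and $\psi(e^{i\theta}) \le c|\theta|^\beta$ on $I_1$, the $I_1$ contribution is bounded by
$$C_3 \int_{-\theta_0}^{\theta_0} \frac{\delta_1\, |\theta|^\beta}{\delta_1^2 + (\theta+\delta_2)^2}\, d\theta.$$
Substituting $\theta = \delta_1 u - \delta_2$ and using $|\delta_1 u - \delta_2|^\beta \le \delta^\beta(1 + |u|^\beta)$ (sub-additivity of $x \mapsto x^\beta$, valid since $0 < \beta \le 1$), this reduces to $C_4 \delta^\beta \int_\R (1+|u|^\beta)/(1+u^2)\, du$, a finite integral because $\beta < 1 < 2$. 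The $I_1$ contribution is thus $O(\delta^\beta) = O(|1-z|^\beta)$, as required. The main obstacle is precisely this sharp Poisson kernel estimate on $I_1$: one must correctly track the tangential offset $\delta_2 = -\Im(1-z)$, since $z$ can approach $1$ non-radially. The change of variables $\theta = \delta_1 u - \delta_2$ absorbs this offset and exposes the natural parabolic scaling of the problem, reducing it to a scale-invariant convergent integral.
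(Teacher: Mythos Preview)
Your approach is correct and constitutes a genuinely different proof from the paper's. The paper instead builds a majorant $\psi_1\in\mathcal{C}^\beta(\partial\D)$ with $\psi_1(1)=0$ and $\psi\le\psi_1$ on $\partial\D$ (using cut-offs and the fact that $|\theta|^\beta$ is itself $\mathcal{C}^\beta$), extends $\psi_1$ harmonically to $\D$, invokes the classical result that harmonic extension preserves $\mathcal{C}^\beta$-regularity, and concludes $\psi(z)\le\psi_1(z)\le\psi_1(1)+\|\psi_1\|_{\mathcal{C}^\beta}|1-z|^\beta$. This avoids any explicit kernel estimate at the price of quoting that regularity result; your argument, by contrast, is fully self-contained.

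One technical step deserves care. The passage from $|e^{i\theta}-z|^2\ge c_1[(1-r)^2+(\theta-\phi)^2]$ to $|e^{i\theta}-z|^2\ge c_2[\delta_1^2+(\theta+\delta_2)^2]$ is not justified as written: although $1-r=\delta_1+O(\delta^2)$, the $O(\delta^2)$ error can be of the same order as $\delta_1$ itself. For instance, take $z=(1-\epsilon)e^{i\phi}$ with $\epsilon\ll\phi^2$ and $\phi\to 0$; then $1-r=\epsilon$ while $\delta_1\approx\phi^2/2$, so $(1-r)^2\not\gtrsim\delta_1^2$. The remedy is simply not to switch variables: keep $(1-r,\phi)$, use $1-|z|^2\le 2(1-r)$ in the numerator, substitute $\theta=(1-r)u+\phi$, and bound $|(1-r)u+\phi|^\beta\le(1-r)^\beta|u|^\beta+|\phi|^\beta$, noting $1-r\le\delta$ and $|\phi|\lesssim\delta$ for $\delta$ small. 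With this adjustment the rest of your computation goes through unchanged.
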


\begin{proof} Observe that the function $|\theta|^{\beta}$ is H\"older continuous of order $\beta$ in $\theta.$ By  using this fact and suitable cut-off functions in $\mathcal{C}^{\infty}(\partial \D),$ we see that there exists a function $\psi_1$ in $\mathcal{C}^{\beta}(\partial \D)$ so that 
$$\psi_1(e^{i\theta})= c|\theta|^{\beta} \quad \text{for} \quad \theta \in (-\theta_0/2, \theta_0/2)$$
 and 
$$ \psi_1 (e^{i\theta}) \ge c \quad \text{for} \quad \theta \in [-\pi,-3\theta_0/4] \cup [3\theta_0/4,\pi].$$
 By (\ref{ine_giasutrenpsi}), we have $\psi(e^{i\theta}) \le \psi_1(e^{i\theta})$ on $\partial\D.$   Extend $\psi_1$ harmonically to $\D.$ Denote also by $\psi_1$ its harmonic extension.  It is classical that $\psi_1 \in \mathcal{C}^{\beta}(\overline{\D}),$ see (\ref{ine_danhgiachuaCkcuauvoibien}) for details. Since $\psi$ is subharmonic on $\D$ and $\psi \le \psi_1$ on $\partial \D,$ we have $\psi \le \psi_1$ on $\D.$ As a result, 
$$\psi(z) \le \psi_1(z) \le \psi_1(1)+ \|\psi_1\|_{\mathcal{C}^{\beta}} |1-z|^{\beta},$$
 for any $z \in \D.$ The desired inequality follows because $\psi_1(1)=0.$  The proof is finished.
\end{proof}

\begin{proof}[Proof of Theorem \ref{the1}.] By \cite[Th. 2.7]{DinhMaVietanh}, the compact set $X$ is itself $(\mathcal{C}^{\beta}, \mathcal{C}^{\beta})$-regular for any $\beta \in (0,1)$.  Let $\tilde{\phi}$ be the function on $X$ defined by 
$$\tilde{\phi}(p):=\min_{p' \in K} [\phi(p')+ A \dist(p,p')^{\alpha/2}]$$
for $p\in X$ and $A \gg \|\phi\|_{\mathcal{C}^{\alpha}}$ is a fixed big constant.  It is not difficult to see that $\tilde{\phi}$ is $\mathcal{C}^{\alpha/2}$ and $\tilde{\phi}= \phi$ on $K.$ Namely, we have 
$$|\tilde{\phi}(p)- \tilde{\phi}(p')| \le A \dist(p,p')^{\alpha/2}$$
for all $p,p'\in X.$ By $(\mathcal{C}^{\alpha/2}, \mathcal{C}^{\alpha/2})$-regularity of $X,$ we have  $\tilde{\phi}_X=\tilde{\phi}_X^*$ which is $\omega_0$-p.s.h. and of H\"older class $\mathcal{C}^{\alpha/2}.$ Hence,  the proof of Theorem \ref{the1} is finished if we can show that 
\begin{align}\label{eq_phingaKX}
\phi_K^*= \tilde{\phi}_X.
\end{align}
 Clearly, by definition of $\phi_K^*$ and $\tilde{\phi}_X,$ we have  $\phi_K^* \ge \tilde{\phi}_X.$ Thus, to prove (\ref{eq_phingaKX}), it is enough to prove that 
$$\phi_K \le \tilde{\phi}$$
 on $X$ because this implies $\phi_K^* \le \tilde{\phi}$ thanks to the continuity of the last function. Since $A$ is big enough and $\phi_K$ is bounded on $X$, we only need to check that 
\begin{align} \label{ine_phiKnga}
\phi_K(p) \le \tilde{\phi}(p),
\end{align}
 for $p$ close to $K.$ This inequality is clear for $p\in K.$

 Fix a $p \not  \in K$ close to $K.$ Let $p_0$ be a point in  $K$ such that 
$$\dist(p,p_0)= \min_{p' \in K}\dist(p,p').$$ 
Put $\delta= \dist(p,p_0).$ It is shown in the proof of \cite[Th. 2.7]{DinhMaVietanh} that $\tilde{\phi}(p) \ge \tilde{\phi}(p_0)+ A' \delta^{\alpha/2}$ for some big constant $A'$ independent of $p$ ($A' \rightarrow \infty$ if $A \rightarrow \infty$). Hence, in order to prove (\ref{ine_phiKnga}), it suffices to prove that 
\begin{align}\label{ine_phiKpp0}
\phi_K(p) \le \tilde{\phi}(p_0)+ A' \delta^{\alpha/2}=\phi(p_0)+ A' \delta^{\alpha/2}.
\end{align}
As already said above, we only need to consider $\delta$ small.  Precisely, we will suppose that $\delta < r_0,$ where $r_0$ is  the constant in Proposition \ref{pro_familydiscK}. Let $f$, $W_{p_0}$,$c_0, \theta_0$ be the analytic disc corresponding to $(p_0,p)$, the open neighborhood of $p_0$ and the constants respectively in that proposition. There is  $z^* \in \D$ with  $|1-z^*| \le \sqrt{c_0\delta}$ so that $f(z^*)= p.$

 Let  $\psi$ be   an  $\omega_0$-p.s.h. function on $X$ so that $\psi \le \phi$ on $K.$ Since $W_{p_0}$ is biholomorphic to the unit ball of $\C^n,$ there exists  a smooth potential $\psi_{\omega_0}$ of $\omega_0$ on $W_{p_0}, i.e,$ we have  $$dd^c \psi_{\omega_0}=\omega_0 \quad  \text{ on} \quad W_{p_0}.$$
 Hence, $\psi_0:= \psi + \psi_{\omega_0}$ is a p.s.h. function on $W_{p_0}$ and $\psi_0 \le \phi_0:= \phi+ \psi_{\omega_0}$ on $W_{p_0} \cap K.$ By the smoothness of $\psi_{\omega_0},$ the function $\phi_0$ is also H\"older continuous of order $\alpha$ on any compact subset of $W_{p_0},$ hence on $\overline{f(\D)}.$  Define $\psi_1:= \psi_0 \circ f,$  and  $\phi_1:= \phi_0 \circ f.$   
Observe that  $\psi_1$ is a p.s.h. function on $\D$ and continuous on $\overline{\D}.$ We also have 
\begin{align}\label{equ_giatricuapsi1vaphi_1}
\psi_1(z^*)=\psi_0(p) \quad \text{and} \quad \phi_1(1)= \phi_0\big(f(1)\big). 
\end{align}
 Since $\|f\|_{\mathcal{C}^1} \le c_0$, the function $\phi_1$ is H\"older continuous of order $\alpha$ with a H\"older constant independent of $p,p_0$ and $f.$ On the other hand, since the disc $f$ is $[e^{-i\theta_0},e^{i\theta_0}]$-attached to $K,$ we have $\psi_1(e^{i\theta}) \le \phi_1(e^{i\theta})$ for $\theta \in [-\theta_0,\theta_0].$ This together with the H\"older continuity of $\phi_1$ yield that 
 \begin{align*} 
\psi_1(e^{i\theta}) \le \phi_1(1)+c |\theta|^{\alpha},  
\end{align*}
for $\theta \in [-\theta_0,\theta_0]$ and for some positive constant $c.$ 
Applying Lemma \ref{le_passagetogeneralcase3} to the subharmonic function $\big(\psi_1 - \phi_1(1)\big)$ gives 
\begin{align} \label{equ_giatricuapsi1vaphi_111}
\psi_1(z^*) \le \phi_1(1) + C |1-z^*|^{\alpha} \quad \text{ for some positive constant } \, C.
\end{align}
Combining (\ref{equ_giatricuapsi1vaphi_1}), (\ref{equ_giatricuapsi1vaphi_111}) and the definitions of $\psi_0, \phi_0,$ one obtains 
$$\psi(p) \le  \phi_0\big(f(1)\big)+ C |1-z^*|^{\alpha},$$
for every $\omega_0$-p.s.h. $\psi$ on $X$ with $\psi \le \phi$ on $K.$ Taking the supremum over all such $\psi$ in the last inequality and using the definition of $\phi_K$ give 
$$\phi_K(p) \le \phi_0\big(f(1)\big)+ C |1-z^*|^{\alpha} \le  \phi(p_0)+ \|\phi\|_{\mathcal{C}^{\alpha}} |1-z^*|^{\alpha}+ C  |1-z^*|^{\alpha} \le \phi(p_0)+ A' \delta^{\alpha/2}$$
because $\phi \in \mathcal{C}^{\alpha}$ and $|1-z^*| \le \sqrt{c_0 \delta}.$
Now consider the case  where $K$ has no singularity. Define
$$\tilde{\phi}'(p):=\min_{p' \in K} [\phi(p')+ A \dist(p,p')^{\alpha}]$$
for $p\in X$ and  some fixed big constant $A \gg \|\phi\|_{\mathcal{C}^{\alpha}}$. 
By using the same argument as above with $\tilde{\phi}'$ in place of $\tilde{\phi}$ and the fact that $|1-z^*| \le c_0 \delta,$  we get the desired conclusion.  The proof is finished. 
\end{proof}

\begin{proof}[ Proof of Theorems  \ref{the_sphere} and \ref{the_sphere2}]
We first prove Theorem \ref{the_sphere}. Recall that we have canonical inclusions: $\R^{n+1} \subset \C^{n+1} \subset \P^{n+1}.$ Let $\Sb^n_{\C}$ be the complexification of $\Sb^n$ in $\P^{n+1}$ defined by the equation 
$$z_0^2+ \cdots+ z_n^2= z_{n+1}^2,$$
where $[z_0: \cdots: z_{n+1}]$ are the homogeneous coordinates on $\P^{n+1}.$ We see that $\Sb^n$ is a compact generic submanifold of $\Sb^n_{\C}$. Choose $X= \Sb^n_{\C},$ $K=\Sb^n,$ $\phi=0$ and  $L= \mathcal{O}(1)|_{X}$ is the restriction of the hyperplane line bundle of $\P^{n+1}$ to $X.$ Observe that the restriction $H^0(X, L^k)|_{\Sb^n}$ of $H^0(X, L^k)$ to $\Sb^n$ is a complex vector space of (complex) dimension $\dim_{\R}\mathcal{P}_k(\Sb^n).$ As $K$ is non-pluripolar in $\Sb^n_{\C},$ any nonzero holomorphic function on an open subset of $\Sb^n_{\C}$ can not annihilate on the whole $K.$ As a result, we have  $\dim_{\R} \mathcal{P}_k(\Sb^n)= \dim_{\R} \mathcal{P}_k(K).$ This allows one to choose a common basis for the two vector spaces $H^0(X, L^k)|_{\Sb^n}$ and $\mathcal{P}_k(K)$ when defining Fekete points. Therefore,  Fekete points in the complex case are those defined on $K$ as in Introduction. Theorem \ref{speed_sphere} is now  a direct corollary of Theorem \ref{the2} with the choice of $(X,L,K, \phi)$ as above, $\gamma=1$ and $\alpha=1-\epsilon,$ for $\epsilon>0$.

Consider the case where $K= \Sb^n,$ the equilibrium measure $\mu_{eq}(K,0)$ coincides with the normalized volume form on $\Sb^n$ induced by the Euclidean metric on $\R^{n+1}$ because $\mu_{eq}(K,\phi)$ is preserved by the actions of the orthogonal matrix group on $\Sb^n.$ Theorem \ref{the_sphere2} is hence obtained in a  similar way by using the fact that $\Sb^n$ has no boundary.  The proof is finished. 
\end{proof}


\begin{remark} \label{re_dimK2n} We discuss here very briefly the case where $\dim_{\R} K=2n$ in which the regularity of $K$ can be improved. For simplicity, we consider the following simple model in the complex dimension $1.$ Let $X= \P^1= \C \cup \{\infty\}$ as in Remark \ref{re_phanviduoptimal}. Let  $K$ be the compact convex polygon in $\C.$ Denote by $S_1, S_2, \cdots,S_m$ the consecutive vertices of $K.$ Let $\pi<\gamma_j< 2\pi$ be the exterior angle at $S_j$ of $K,$ for $1\le j \le m.$  Put $\gamma= \max_{1\le j \le m}\gamma_j.$  Then, $K$ is $(\mathcal{C}^{\alpha}, \mathcal{C}^{\alpha \pi/ \gamma})$-regular. When $\gamma_j=\pi$ for all $j,$ we re-obtain  \cite[Th. 2.7]{DinhMaVietanh}.  The idea for the proof is as follows.  Let $\phi \in \mathcal{C}^{\alpha}(K).$ In order to get the above regularity of $K,$ it is enough to show that given any p.s.h. function $\psi$ on $\C$ so that $\psi \le \phi$ on $K,$ then for every $j,$ we have $\psi(z) \le \phi(S_j)+ A |z- S_j|^{\alpha \pi/\gamma}$ for every $z$ close to $S_j$ and for some fixed big constant $A.$ Let $L_1$ be the open domain of $\C$ limited by the two rays $S_1 S_{m}$ and $S_1S_{2}$ which does not contain $K.$ Using an affine change of coordinates, we can suppose that $S_1=0,$ the ray $S_1S_2$ is $\{z=(x,0): x\ge 0\}$ and $L_1 \supset \H:= \{ \Im z > 0\}.$ Using the map $(z-i)/(z+i)$ sending $\H$ biholomorphically to $\D,$ one easily sees  that the map $\Psi(z):=(z^{\pi/\gamma}-i)(z^{\pi/\gamma}+i)$ is a biholomorphism from $L_1$ to $\D.$ Clearly, $\Psi$ is H\"older continuous of order $\pi/\gamma$ on an open neighborhood of $S_1$ in $\overline{L}_1.$ An application of Lemma \ref{le_passagetogeneralcase3} to $\psi \circ \Psi^{-1}$ gives the desired result. 
\end{remark}

\section{ Two special families of analytic discs } \label{seclocal}
\subsection{Hilbert transform} \label{subsec_Hilberttransform}
Denote by $z=x+ i y$ the complex variable on $\C$ and by $\xi=e^{i\theta}$ the variable on $\partial \D.$ For any $m \in \N$ and $r >0,$ let $B_m(0,r)$ be the Euclidean ball centered at $0$ of radius $r$ of $\R^m$ and let $B_m^*(0,r)= B_m(0,r)\backslash \{0\}.$ Denote by $|\cdot|$ the Euclidean norm on $\R^m.$ The same notations will be used for $\C^n$ that we sometimes identify with $\R^{2n}.$ Let $Z$ be  a submanifold of $\R^m.$ The Euclidean metric on $\R^m$ induces a metric on $Z.$  For $\beta \in (0,1)$ and $k \in \N$, let $\mathcal{C}^{k,\beta}(Z)$ be the space of real-valued functions on $Z$ which are differentiable up to the order $k$ and whose $k^{th}$ derivatives are H\"older continuous of order $\beta.$ This is a Banach space with the  $\mathcal{C}^{k,\beta}$-norm given by
$$\|v\|_{k,\beta,Z}:=\|v\|_{k,Z}+ \sup_{\xi \not= \xi', \xi,\xi' \in Z} \frac{\|D^k v(\xi)- D^k v(\xi')\|}{|\xi - \xi'|^{\beta}},$$
where  $\|\cdot\|_{k,Z}:=\|\cdot\|_{\mathcal{C}^k(Z)}$ and $D^k v$ denotes the $k^{th}$-differential of $v.$  In the proof, we will only use this norm for $Z=\overline{\D}$ or $\partial \D.$ When $Z$ is clear from the context, we will remove the subscript $Z$ from the above notation of norm.   For any tuple $v=(v_0,\cdots,v_m)$ consisting of functions in  $\mathcal{C}^{k,\beta}(Z)$, we define its $\mathcal{C}^{k,\beta}$-norm to be the maximum of the ones of  its components.

Recall that an arbitrary continuous function $u_0(\xi)$ on $\partial \D$ can be extended uniquely to be a harmonic function on $\D$ which is continuous on $\overline{\D}$. Since this correspondence is bijective, without stating explicitly, we will freely identify $u_0$ with its harmonic extension on $\D.$ We will write $u_0(z)=u_0(x+iy)$ to indicate the harmonic extension of $u_0(e^{i\theta}).$ 
It is well-known  that the Cauchy transform of $u_0,$ given by 
$$\mathcal{C}u_0(z):= \frac{1}{2\pi} \int_{-\pi}^{\pi} u_0(e^{i\theta}) \frac{e^{i\theta}+ z}{e^{i\theta}-z} d\theta,$$
is a holomorphic function on $\D$ whose real part is $u_0.$ 
 Decomposing the last formula into the real and imaginary parts, we obtain that 
\begin{align} \label{equ_harmonicextension}
u_0(z)= \frac{1}{2 \pi} \int_{-\pi}^{\pi} \frac{(1- |z|^2)}{|e^{i\theta}-z|^2} u_0(e^{i\theta}) d\theta.
\end{align}
and 
\begin{align*} 
\mathcal{T}u_0(z)= \frac{1}{2 \pi } \int_{-\pi}^{\pi } \frac{(z e^{- i\theta}- \bar{z} e^{i\theta})}{i |e^{i\theta}-z|^2} u_0(e^{i\theta}) d\theta.
\end{align*}
The function $\mathcal{T} u_0$ is harmonic on $\D$ but is not always continuous up to the boundary of $\D.$ Let $k$ be an arbitrary natural number and let  $\beta$ be an arbitrary number in $(0,1).$ A result of Privalov (see \cite[Th. 4.12]{MerkerPorten} or \cite[Sec. 6.1]{BER}) implies that if $u_0$ belongs to  $\mathcal{C}^{k,\beta}(\partial\D)$, then $\mathcal{T}u_0$ is continuous up to $\partial \D$ and  $\|\mathcal{T}u_0\|_{k,\beta,\partial \D}$  is bounded by  $ \|u_0\|_{k,\beta,\partial \D}$ times a constant independent of $u_0.$ Hence, the linear self-operator of $\mathcal{C}^{k, \beta}(\partial \D)$ defined by sending $u_0$ to the  restriction of $\mathcal{T}u_0$ onto $\partial \D$ is bounded and called  \emph{the Hilbert transform}. For simplicity, we also denote it by $\mathcal{T}.$   In the method of analytic discs,  it is convenient to use a modified version $\mathcal{T}_1$ of $\mathcal{T}$ defined by
 $$\mathcal{T}_1 u_0:= \mathcal{T}u_0 - \mathcal{T}u_0(1).$$
Hence we  always have $\mathcal{T}_1 u_0(1)=0$ and
\begin{align}\label{eq_daohamcuaT}
\partial_{\theta}\mathcal{T}_1 u_0=\partial_{\theta} \mathcal{T} u_0= \mathcal{T}\partial_{\theta} u_0,
\end{align}
see \cite[p.121]{MerkerPorten} for a proof.  The boundedness of $\mathcal{T}$ on $\mathcal{C}^{k, \beta}(\partial \D)$  implies that there is a constant $C_{k,\beta}>1$ such that for any $v \in \mathcal{C}^{k,\beta}(\partial \D)$ we have  
\begin{align} \label{ine_chuancuaT}
\|\mathcal{T}_1 v\|_{k,\beta, \partial \D} \le C_{k,\beta} \|v\|_{k,\beta,\partial \D}.
\end{align}
Extending $u_0, \mathcal{T}_1 u_0$ harmonically to $\D.$ By construction, the function $f(z):= -\mathcal{T}_1 u_0(z) + i u_0(z)$ is holomorphic on  $\D$ and continuous on $\overline{\D}$ provided that $u_0$ is in $\mathcal{C}^{\beta}(\partial \D)$ with $0< \beta <1.$ By \cite[Th. 4.2]{MerkerPorten2}, $\|f\|_{k,\beta, \overline{\D}}$ is bounded by $ \|f\|_{k,\beta,\partial \D}$ times a constant depending only on $(k,\beta).$
Since $\| u_0\|_{k,\beta, \overline{\D}} \le \|f\|_{k,\beta, \overline{\D}}$ and $\|f\|_{k,\beta,\partial\D}\le  (1+ C_{k,\beta}) \|u_0\|_{k,\beta,\partial \D}$ by (\ref{ine_chuancuaT}), we have
\begin{align} \label{ine_danhgiachuaCkcuauvoibien}
\| u_0\|_{k,\beta,\overline{\D}} \le C'_{k,\beta} \| u_0\|_{k,\beta, \partial \D}, 
\end{align}
for  some constant $C'_{k,\beta}$ depending only on $(k,\beta).$ A direct consequence of the above inequalities is that when $u_0$ is smooth on $\partial \D$, the associated holomorphic function $f$ is also smooth on $\overline{\D}.$


\subsection{Analytic discs half-attached to $\R^n$ in $\C^n$} \label{subsec_1}
The goal of this subsection is to construct a special family of analytic discs half-attached to $\R^n$ in $\C^n.$ The main result is Proposition \ref{pro_specialanalyticdisc} presented at the end of the subsection. The reader should keep in mind that the idea that we use below will be constantly applied later.

 In what follows, we identify $\C^n$ with $\R^n+ i \R^n.$ Let $\mathbf{z} \in B^*_{2n}(0,1).$ Let $u=(u_1,\cdots,u_n)$ be a vector with components $u_j  \in C^{k, \beta}(\partial \D)$ for $1 \le  j \le n$ such that $u \equiv 0$ on $\partial^+ \D.$ Then, $\mathcal{T}_1 u:= (\mathcal{T}_1 u_1, \cdots, \mathcal{T}_1 u_n) $ is a vector in $\big( C^{k, \beta}(\partial \D)\big)^n.$  As above, extend $u$ and  $\mathcal{T}_1u$  harmonically to $\D.$
By the last subsection, $u$ and $\mathcal{T}_1 u$ belong to $\big(\mathcal{C}^{k,\beta}(\overline{\D})\big)^n.$  It follows that the map 
$$f:= -\mathcal{T}_1u+ i u$$
 is a $\mathcal{C}^{k,\beta}$ mapping from $\overline{\D}$ to $\C^n$ which is holomorphic on  $\D$ and $f|_{\partial^+ \D} \subset \R^n.$ In other words, $f$ is a $\mathcal{C}^{k,\beta}$ analytic disc half-attached to $\R^n$ in $\C^n$ with $f(1)=0.$  We are going to choose $u$ depending on the parameter $\mathbf{z}$ in a small enough ball centered at $0$ such that there exist $z^* \in \D$ and a constant $c_0$ independent of $\mathbf{z}$ for which
\begin{align} \label{condition00}
\|f\|_{3,\overline{\D}} \le c_0, 
\end{align}
and
\begin{align}\label{condition0}
f(z^*)=\mathbf{z} \quad \text{and} \quad |1- z^*|\le c_0 |\mathbf{z}|.
\end{align}
Recall that we will systematically identify continuous functions on $\partial \D$ with their harmonic extension to $\D.$ Hence, for any continuous function $u$ on $\partial \D,$ we can speak of its derivatives in $(x,y)$ as the ones of its harmonic extension, where $z=x+iy \in \D.$

\begin{lemma} \label{le_version8existenceu} There exists a function  $u \in \mathcal{C}^{\infty}(\partial \D)$ vanishing on $\partial^+ \D$ so that $\partial_x u(1)=-1.$
\end{lemma}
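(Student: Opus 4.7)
The plan is to realize $\partial_x u(1)$ as an explicit linear functional on boundary data via the Poisson integral, and then simply scale a bump function to hit the target value $-1$.

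First I would observe that since the candidate function $u \in \mathcal{C}^\infty(\partial \D)$ will vanish on the entire open arc $\partial^+\D$, its harmonic extension is not only in $\mathcal{C}^\infty(\overline\D)$ (via (\ref{ine_danhgiachuaCkcuauvoibien}), or a direct Poisson-kernel argument with smooth data) but in fact extends harmonically across $\partial^+\D$ by Schwarz reflection. In particular $\partial_x u(1)$ is well defined, and all tangential derivatives of $u$ at $1$ vanish automatically.

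Next I would derive a closed formula for $\partial_x u(1)$. Using (\ref{equ_harmonicextension}) with $z = 1 - t$, $t \in (0,1)$,
\begin{equation*}
u(1-t) = \frac{1}{2\pi}\int_{-\pi}^{\pi} \frac{2t - t^2}{|e^{i\theta} - (1-t)|^2}\, u(e^{i\theta})\, d\theta.
\end{equation*}
Because $u$ vanishes on $\partial^+\D$, the integrand is supported in $\{\theta : |\theta| \geq \pi/2\}$, where $|e^{i\theta}-(1-t)|$ stays bounded below by an absolute positive constant for all small $t$. Dominated convergence therefore lets me take the limit $t \to 0^+$ inside the integral, which together with $u(1)=0$ yields
\begin{equation*}
\partial_x u(1) \;=\; -\lim_{t\to 0^+} \frac{u(1-t)}{t} \;=\; -\frac{1}{\pi}\int_{\pi/2}^{3\pi/2} \frac{u(e^{i\theta})}{|e^{i\theta} - 1|^2}\, d\theta.
\end{equation*}
The right-hand side is a continuous (in fact bounded) linear functional on the subspace of $\mathcal{C}^0(\partial\D)$ consisting of functions supported in $\partial^-\D$.

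Finally I would construct $u$ by scaling: pick any nonzero nonnegative $\chi \in \mathcal{C}^\infty(\partial \D)$ whose support is a compact subset of the open arc $\partial^-\D \setminus \{\pm i\}$ (so that $\chi$ and all its derivatives vanish at $\pm i$ and the extension by $0$ on $\partial^+\D$ is genuinely $\mathcal{C}^\infty$ on $\partial \D$). The number
\begin{equation*}
I \;=\; \frac{1}{\pi}\int_{\pi/2}^{3\pi/2} \frac{\chi(e^{i\theta})}{|e^{i\theta} - 1|^2}\, d\theta
\end{equation*}
is strictly positive and finite, so $u := \chi/I$ has the required properties: $u \in \mathcal{C}^\infty(\partial \D)$, $u \equiv 0$ on $\partial^+\D$, and $\partial_x u(1) = -1$.

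There is no real obstacle: the only point deserving a line of care is the dominated-convergence step in the normal-derivative computation, which is painless because the vanishing of $u$ near $1$ keeps the Poisson kernel away from its singularity on the support of the integrand.
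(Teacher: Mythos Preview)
Your proof is correct and follows essentially the same route as the paper: both compute $\partial_x u(1)$ as the explicit integral $\frac{1}{2\pi}\int_{-\pi}^\pi \frac{u(e^{i\theta})}{\cos\theta-1}\,d\theta$ (which is your $-\frac{1}{\pi}\int \frac{u}{|e^{i\theta}-1|^2}\,d\theta$, since $|e^{i\theta}-1|^2=2(1-\cos\theta)$) and then pick a smooth bump supported in $\partial^-\D$ and scale. Your version is just a bit more explicit about the dominated-convergence justification and the scaling step.
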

\begin{proof} Differentiating (\ref{equ_harmonicextension}) gives 
$$\partial_x u(1)=\frac{1}{2\pi} \int_{-\pi}^{\pi} \frac{u(e^{i\theta})}{\cos\theta -1}d\theta.$$
Note that the last integral is well-defined because $u$ vanishes on $\partial^+ \D.$ It is easy to choose a smooth $u$ so that the above integral is equal to $-1$ and $u \equiv 0$ on $\partial^+ \D.$ The proof is finished.   
\end{proof}

\begin{lemma} \label{le_taylortai1aacuau} Let $u$ be a functions as in Lemma \ref{le_version8existenceu}.  Then there exist two smooth functions $g_1, g_2$ defined on $[0,1]$ so that $u(1-s+i s)= s+ s^2 g_1(s)$ and $-\mathcal{T}_1u(1-s+ i s)=  s+ s^2g_2(s),$ for every $s \in [0,1].$
\end{lemma}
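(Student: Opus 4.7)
The plan is to derive both expansions by a second-order Taylor expansion along the path $z(s) = 1 - s + is$ for $s \in [0,1]$, viewed inside $\overline{\D}$. The starting point is that, since $u \in \mathcal{C}^{\infty}(\partial\D)$, the discussion following (\ref{ine_danhgiachuaCkcuauvoibien}) in Section \ref{subsec_Hilberttransform} ensures that both the harmonic extension of $u$ and the associated holomorphic function $f = -\mathcal{T}_1 u + i u$ are smooth on $\overline{\D}$; in particular, the remainder in any finite Taylor expansion at $1$ is automatically a smooth function of $s$ on $[0,1]$.

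For the zeroth-order terms, $u(1) = 0$ because $1 \in \partial^+\D$ and $u$ vanishes there, while $\mathcal{T}_1 u(1) = 0$ by the very definition of $\mathcal{T}_1$. For the first-order term of $u$, along $z(s) = 1 - s + is$ one has $dx/ds = -1$ and $dy/ds = 1$, so $\frac{d}{ds} u(z(s))\big|_{s=0} = -\partial_x u(1) + \partial_y u(1)$. By hypothesis $\partial_x u(1) = -1$, and I claim $\partial_y u(1) = 0$. Indeed, $1 = e^{i 0}$ lies in the interior (relative to $\partial\D$) of $\partial^+\D = \{\xi \in \partial \D : \Re \xi \ge 0\}$, so $u$ vanishes on an open arc around $1$; differentiating along that arc gives $\partial_\theta u(e^{i\theta})\big|_{\theta=0} = 0$, and since the tangent vector to $\partial\D$ at $1$ is purely imaginary, this equals the tangential derivative $\partial_y u(1)$ of the harmonic extension. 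Hence the first-order coefficient equals $1$, yielding $u(1-s+is) = s + s^2 g_1(s)$ with $g_1 \in \mathcal{C}^{\infty}([0,1])$.

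For $-\mathcal{T}_1 u$, the Cauchy--Riemann equations applied to the holomorphic function $f = -\mathcal{T}_1 u + i u$ give $\partial_x(-\mathcal{T}_1 u) = \partial_y u$ and $\partial_y(-\mathcal{T}_1 u) = -\partial_x u$. Therefore
\[
\frac{d}{ds}(-\mathcal{T}_1 u)(z(s))\Big|_{s=0} = -\partial_y u(1) - \partial_x u(1) = 0 - (-1) = 1,
\]
and the same Taylor-remainder argument produces $-\mathcal{T}_1 u(1-s+is) = s + s^2 g_2(s)$ with $g_2 \in \mathcal{C}^{\infty}([0,1])$.

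The main obstacle here is really only a bookkeeping subtlety: one must correctly observe that $1$ is interior to $\partial^+\D$ relative to $\partial\D$ (so that the tangential derivative of $u$ at $1$ vanishes), and then be careful with the signs imposed by the Cauchy--Riemann equations. Beyond that, the statement reduces to a second-order Taylor expansion of two smooth functions, with the smoothness up to the boundary supplied by the regularity theory of the Hilbert transform already set up in Section \ref{subsec_Hilberttransform}.
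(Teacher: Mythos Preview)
Your proof is correct and follows essentially the same route as the paper: compute the zeroth- and first-order terms at $z=1$ via the vanishing of $u$ on $\partial^+\D$ and the Cauchy--Riemann equations for $f=-\mathcal{T}_1 u + i u$, then invoke a second-order Taylor expansion with smooth remainder. The only cosmetic difference is that the paper writes out $g_1,g_2$ explicitly as integral remainders, whereas you simply appeal to smoothness on $\overline{\D}$.
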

\begin{proof} 
On the other hand, the Cauchy-Riemann equations imply $-\partial_x \mathcal{T}_1u(1)= \partial_y u(1)=0$ (because $u$ vanishes on $\partial^+ \D$) and $-\partial_y \mathcal{T}_1u(1)= -\partial_x u(1)= 1.$  Define 
$$g_1(s):= \frac{1}{2}\int_0^1 (\partial^2_x- 2 \partial_x\partial_y+ \partial^2_y)u\big( t(1-s+i s)+ (1-t)\big) dt$$
and  
$$\quad g_2(s):= - \frac{1}{2}\int_0^1 (\partial^2_x- 2 \partial_x\partial_y+ \partial^2_y) \mathcal{T}_1 u\big( t(1-s+i s)+ (1-t)\big) dt.$$  
A direct application of Taylor's expansions to $u$ and $-\mathcal{T}_1u$ shows that $g_1$ and $g_2$ satisfy the desired property.
\end{proof}

We will repeatedly use the following known version of the inverse function theorem. 

\begin{lemma}\label{le_implicitfunction2} Let $m\in \N^*.$ Let $\Phi_0$ be a function from $B_m(0,1)$ to $\R^m.$ Assume that there are a  nondegenerate square matrix $A$ of order $m$ and a $M$-Lipschitz function $g$ on $B_m(0,1)$ for some constant $M>0$ such that 
$$\Phi_0(\mathbf{z})= A \mathbf{z}+ g(\mathbf{z}),$$
 for every $\mathbf{z} \in B_m(0,1)$ and $g(0)=0,$ $|A^{-1}|_{\ln} M <1,$ where $|A^{-1}|_{\ln}$ is the norm of the linear self-map of $\R^m$ associated with $A^{-1}.$ Then, for every $0<r<1$ and every $\tilde{\mathbf{z}} \in B_m\big(0, \frac{1-|A^{-1}|_{\ln} M }{|A^{-1}|_{\ln}} r \big),$ there exists a unique point $\mathbf{z}^* \in B_m(0,r)$ such that $\Phi_0(\mathbf{z}^*)= \tilde{\mathbf{z}}.$ 
\end{lemma}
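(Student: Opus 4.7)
The plan is to rewrite the equation $\Phi_0(\mathbf{z}) = \tilde{\mathbf{z}}$ as a fixed-point problem for a contraction on a closed ball in $\R^m$, and then apply the Banach fixed-point theorem. Since $A$ is nondegenerate, the equation $A\mathbf{z} + g(\mathbf{z}) = \tilde{\mathbf{z}}$ is equivalent to $\mathbf{z} = A^{-1}\tilde{\mathbf{z}} - A^{-1}g(\mathbf{z})$, so I would define
\[
T(\mathbf{z}) := A^{-1}\tilde{\mathbf{z}} - A^{-1}g(\mathbf{z}),
\]
and look for fixed points of $T$ inside the closed ball $\overline{B_m(0,r)}$.

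The contraction property is immediate: using that $g$ is $M$-Lipschitz,
\[
|T(\mathbf{z}) - T(\mathbf{z}')| \le |A^{-1}|_{\ln} \,|g(\mathbf{z}) - g(\mathbf{z}')| \le |A^{-1}|_{\ln} M \,|\mathbf{z} - \mathbf{z}'|,
\]
and the hypothesis $|A^{-1}|_{\ln} M < 1$ gives a genuine contraction constant.

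Next I would check that $T$ preserves the ball. Because $g(0)=0$ and $g$ is $M$-Lipschitz, one has $|g(\mathbf{z})| \le M|\mathbf{z}|$, hence
\[
|T(\mathbf{z})| \le |A^{-1}|_{\ln} |\tilde{\mathbf{z}}| + |A^{-1}|_{\ln} M\, |\mathbf{z}|.
\]
For $\mathbf{z} \in \overline{B_m(0,r)}$ and $\tilde{\mathbf{z}}$ in the open ball of radius $\frac{1-|A^{-1}|_{\ln} M}{|A^{-1}|_{\ln}} r$, this yields $|T(\mathbf{z})| < (1 - |A^{-1}|_{\ln} M)r + |A^{-1}|_{\ln} M\, r = r$, so $T$ maps $\overline{B_m(0,r)}$ into $B_m(0,r) \subset B_m(0,1)$, and in particular $g \circ T$ is well-defined at each step of the iteration.

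Finally, the Banach fixed-point theorem applied to the complete metric space $\overline{B_m(0,r)}$ gives a unique fixed point $\mathbf{z}^*$ of $T$ in $\overline{B_m(0,r)}$, and the strict inequality above forces $\mathbf{z}^* \in B_m(0,r)$. By construction $\Phi_0(\mathbf{z}^*) = \tilde{\mathbf{z}}$, and uniqueness is inherited from the uniqueness of the fixed point. I do not anticipate any serious obstacle here: the only mild point to keep track of is the distinction between the open and closed balls, which the strict bound on $|\tilde{\mathbf{z}}|$ handles automatically.
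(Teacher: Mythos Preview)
Your proof is correct and follows essentially the same fixed-point argument as the paper's: rewrite $\Phi_0(\mathbf{z})=\tilde{\mathbf{z}}$ as $\mathbf{z}=A^{-1}(\tilde{\mathbf{z}}-g(\mathbf{z}))$, check that this map is a contraction sending $\overline{B_m(0,r)}$ into itself, and invoke the Banach fixed-point theorem. You are in fact slightly more careful than the paper in distinguishing the open and closed balls, using the strict bound on $|\tilde{\mathbf{z}}|$ to place the fixed point in $B_m(0,r)$.
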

\begin{proof} Since $g$ is $M$-Lipschitz on $B_m(0,1),$ we have 
$$|g(\mathbf{z})- g(\mathbf{z}')| \le M |\mathbf{z}- \mathbf{z}'|,$$  
for all $\mathbf{z}, \mathbf{z}' \in B_m(0,1).$ In particular, we have $|g(\mathbf{z})| \le M |\mathbf{z}|$ because $g(0)=0.$ Let $\tilde{\mathbf{z}}$ be a point in $B_m\big(0, \frac{1-|A^{-1}|_{\ln}M }{|A^{-1}|_{\ln}} r \big).$ The equation $\Phi_0(\mathbf{z})=\tilde{\mathbf{z}}$ is equivalent to $\mathbf{z}= A^{-1}\big(\tilde{\mathbf{z}} - g(\mathbf{z})\big).$ Let $r \in (0,1).$ Define 
$$R(\mathbf{z}):= A^{-1}\big(\tilde{\mathbf{z}} - g(\mathbf{z})\big),$$
 for $\mathbf{z} \in \overline{B}_m(0,r).$ Then $R$ is a self-map of  $\overline{B}_m(0,r).$ Indeed, we have 
 $$|R(\mathbf{z})| \le |A^{-1}|_{\ln} |\tilde{\mathbf{z}} - g(\mathbf{z})| \le  |A^{-1}|_{\ln} \big(|\tilde{\mathbf{z}}|+ M |\mathbf{z}|\big) \le |A^{-1}|_{\ln}\big(  \frac{1-|A^{-1}|_{\ln} M }{|A^{-1}|_{\ln}} r+ M r\big) = r,$$
 for any $\mathbf{z} \in \overline{B}_m(0,r).$ Additionally, similar estimates also gives
$$|R(\mathbf{z})- R(\mathbf{z}')| \le |A^{-1}|_{\ln}M |\mathbf{z}- \mathbf{z}'|.$$  
Since $|A^{-1}|_{\ln} M<1,$ $R$ is a contraction of $\overline{B}_m(0,r).$ Since the last metric space is compact, the fixed point theorem applied to $R$ implies that $R$ has a unique fixed point $\mathbf{z}^* \in \overline{B}_m(0,r).$  Equivalently, there is a unique point $\mathbf{z}^* \in \overline{B}_m(0,r)$ for which $\Phi_0(\mathbf{z}^*)= \tilde{\mathbf{z}}.$ 
\end{proof}

For any two vectors $\mathbf{v}^j=(v^j_1, \cdots, v^j_n) \in \R^n$ with $j=1$ or $2,$  we denote by $\mathbf{v^1}\cdot \mathbf{v}^2$ the vector in $\R^n$ whose $l^{th}$ component is $v^1_l v^2_l$ for $1\le l\le n.$  Let $u$ be a function as in Lemma \ref{le_version8existenceu}. By abuse of notation, denote also by $u$ the vector of $\mathcal{C}^{\infty}(\partial\D)^n$ whose components are all equal to $u.$ We define 
\begin{align} \label{defcuau_version8}
u_{\mathbf{z},t}(e^{i\theta}):= t  u(e^{i\theta}) \cdot \frac{\Im \mathbf{z}}{|\mathbf{z}|}, 
\end{align}
for any $\mathbf{z} \in B^*_{2n}(0,1)$ and $t \in (0,1].$ Extend $u_{\mathbf{z},t}$ harmonically to $\D.$ Define 
\begin{align} \label{defcuaF_version8}
F(z, \mathbf{z},t):=  t (\Re \mathbf{z}- \Im \mathbf{z})  -\mathcal{T}_1(u_{\mathbf{z},t})(z)+ iu_{\mathbf{z},t}(e^{i\theta}),
\end{align}
for any $z \in \overline{\D},$ $\mathbf{z} \in B^*_{2n}(0,1)$ and $t \in (0,1].$ We have following properties of $F.$ 

\begin{proposition}\label{pro_specialanalyticdisc} The map  $F: \overline{\D} \times B^*_{2n}(0,1) \times (0,1] \rightarrow  \C^n$ is smooth and the three following conditions hold:

$(i)$ for any $\mathbf{z} \in B^*_{2n}(0,1)$ and $t \in (0,1],$ the mapping $F(\cdot, \mathbf{z},t)$ is a smooth analytic disc half-attached to $\R^n$ in $\C^n,$ and $F(1,\mathbf{z},t)=  t (\Re \mathbf{z}- \Im \mathbf{z})  \in B_n(0,2t) \subset \R^n,$

$(ii)$ there exists a constant $r_0>0$ so that for any $\mathbf{z} \in B^*_{2n}(0,r_0)$ and $t \in (0,1],$  there exists $\mathbf{z}^* \in B^*_{2n}(0,1)$ for which 
$$F(1-|\mathbf{z}^*|+ i |\mathbf{z}^*|, \mathbf{z}^*,t)= t \mathbf{z}$$
 and $|\mathbf{z}^*| \le 2| \mathbf{z}|,$ 

$(iii)$ there exists a constant $c_0>1$ so that  for any $\mathbf{z} \in B^*_{2n}(0,1)$ and $t \in (0,1],$ we have
\begin{align} \label{ine_danhgiadaohamchoF} 
 \|F(\cdot, \mathbf{z},t)\|_{3} \le t c_0  \, \text{ and} \quad  \|D_{\mathbf{z}} F(\cdot, \mathbf{z},t) \|_{2} \le  t c_0 |\mathbf{z}|^{-1},
 \end{align} 
 where $D_{\mathbf{z}}$ is the differential with respect to $\mathbf{z}.$
\end{proposition}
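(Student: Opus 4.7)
The smoothness of $F$ is immediate from that of the Hilbert transform acting on $\mathcal{C}^\infty(\partial \D)$. For part $(i)$, since $u$ vanishes on $\partial^+ \D$ so does $u_{\mathbf{z},t}$; by the discussion in Section~\ref{subsec_Hilberttransform} the map $-\mathcal{T}_1 u_{\mathbf{z},t} + i u_{\mathbf{z},t}$ extends holomorphically to $\D$ and smoothly up to $\overline{\D}$, its imaginary part vanishes on $\partial^+ \D$, so $F(\cdot, \mathbf{z}, t)$ is half-attached to $\R^n$; the normalization $\mathcal{T}_1 u(1)=0$ gives $F(1,\mathbf{z},t)=t(\Re \mathbf{z}-\Im \mathbf{z})$, whose Euclidean norm is at most $\sqrt{2}\,t|\mathbf{z}|<2t$. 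For part $(iii)$, the bound $\|u_{\mathbf{z},t}\|_{3,\partial\D}\le t\|u\|_3$ follows from $|\Im \mathbf{z}|/|\mathbf{z}|\le 1$, so combining \eqref{ine_chuancuaT} with \eqref{ine_danhgiachuaCkcuauvoibien} immediately yields $\|F(\cdot,\mathbf{z},t)\|_3\le c_0 t$; the $\mathbf{z}$-derivative estimate is analogous, the only singular contribution coming from differentiating $\Im \mathbf{z}/|\mathbf{z}|$, which produces a factor of order $|\mathbf{z}|^{-1}$.

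The heart of the proof, and the main obstacle, is part $(ii)$. The plan is to restrict attention to the radial path $z=1-s+is$, $s\in[0,1]$, along which Lemma~\ref{le_taylortai1aacuau} supplies an exact second-order expansion of $u$ and $-\mathcal{T}_1 u$ at $z=1$. Substituting this expansion into \eqref{defcuaF_version8} will give
\begin{align*}
F(1-s+is,\mathbf{w},t)=t(\Re \mathbf{w}-\Im \mathbf{w})+t\bigl(s+s^2 g_2(s)\bigr)\frac{\Im \mathbf{w}}{|\mathbf{w}|}+it\bigl(s+s^2 g_1(s)\bigr)\frac{\Im \mathbf{w}}{|\mathbf{w}|}.
\end{align*}
Specializing to $s=|\mathbf{w}|$ the linear-in-$s$ contributions telescope into $t\mathbf{w}$, producing
\begin{align*}
F(1-|\mathbf{w}|+i|\mathbf{w}|,\mathbf{w},t)=t\bigl(\mathbf{w}+G(\mathbf{w})\bigr),\qquad G(\mathbf{w}):=|\mathbf{w}|\,\Im \mathbf{w}\,\bigl(g_2(|\mathbf{w}|)+ig_1(|\mathbf{w}|)\bigr).
\end{align*}

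Solving $F(1-|\mathbf{z}^*|+i|\mathbf{z}^*|,\mathbf{z}^*,t)=t\mathbf{z}$ thus reduces to solving $\mathbf{z}^*+G(\mathbf{z}^*)=\mathbf{z}$, which we intend to handle by Lemma~\ref{le_implicitfunction2} with $A=I$. The hardest point to verify will be control of $G$ despite its non-smoothness at the origin: writing $G$ as a product of the individually bounded and Lipschitz factors $\Im \mathbf{w}$, $|\mathbf{w}|$, and $g_j(|\mathbf{w}|)$, one checks that on $B_{2n}(0,r)$ the product has sup-norm $O(r^2)$ and Lipschitz constant $O(r)$, so a standard rescaling to the unit ball brings Lemma~\ref{le_implicitfunction2} into force for a sufficiently small $r_0$. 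This furnishes, for every $\mathbf{z}\in B^*_{2n}(0,r_0)$, a unique solution $\mathbf{z}^*$ in a ball of radius less than $1$; the bound $|\mathbf{z}^*|\le 2|\mathbf{z}|$ follows from $\mathbf{z}^*=\mathbf{z}-G(\mathbf{z}^*)$ together with $|G(\mathbf{z}^*)|\le \tfrac12 |\mathbf{z}^*|$, while $\mathbf{z}^*\ne 0$ because $\mathbf{z}\ne 0$ and $G(0)=0$.
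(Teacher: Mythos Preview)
Your proof is correct and follows essentially the same route as the paper's: parts $(i)$ and $(iii)$ are immediate from the construction, and for $(ii)$ you use Lemma~\ref{le_taylortai1aacuau} at $s=|\mathbf{w}|$ to write $F(1-|\mathbf{w}|+i|\mathbf{w}|,\mathbf{w},t)=t\mathbf{w}+tG(\mathbf{w})$ with $G$ Lipschitz of order $O(r)$ on $B_{2n}(0,r)$, then invoke Lemma~\ref{le_implicitfunction2}. The only cosmetic difference is that the paper keeps the factor $t$ and applies Lemma~\ref{le_implicitfunction2} with $A=t\,\Id$ (so the Lipschitz bound needed is $M<t$), whereas you divide through by $t$ and take $A=I$; your rescaling step is also unnecessary since Lemma~\ref{le_implicitfunction2} already allows any radius $r<1$, but this does no harm.
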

\begin{proof} The properties $(i)$ and $(iii)$ automatically hold by our construction. It remains to prove $(ii).$ Fix $t \in (0,1].$  For every $\mathbf{z} \in B^*_{2n}(0,1),$ define $\Phi(\mathbf{z}): =F(1-|\mathbf{z}|+ i |\mathbf{z}|, \mathbf{z},t)$ and $\Phi(0):=0.$ Applying Lemma \ref{le_taylortai1aacuau} to each component of  $u_{\mathbf{z},t}$ and $s= |\mathbf{z}|,$ using (\ref{defcuau_version8}) and (\ref{defcuaF_version8}), there exists a smooth map $g_0: [0,1] \rightarrow \R^n$ such that  
$$\Phi(\mathbf{z})=  t (\Re \mathbf{z}- \Im \mathbf{z}) +  t  \Im \mathbf{z}+    t|\mathbf{z}|^2 g_0(|\mathbf{z}|) \cdot \frac{\Im T(\mathbf{z})}{|\mathbf{z}|} =t \mathbf{z}+t|\mathbf{z}| g_0(|\mathbf{z}|) \cdot  \Im T(\mathbf{z}) .$$
Put 
$$g(\mathbf{z}):= t |\mathbf{z}|  g_0(|\mathbf{z}|) \cdot \Im T(\mathbf{z}).$$
 Let $r_0< 1/16  \min\{\|g_0\|^{-1}_{1}, 1\}.$  Observe that  $g(0)=0$ and
$$\|g\|_{1, B^*_{2n}(0,2r_0)} \le t/4 < t/2$$
Thus, $g$ is $t/2$-Lipschitz on $\overline{B}_{2n}(0, 2 r_0).$  Applying Lemma \ref{le_implicitfunction2} to $\Phi$ in place of $\Phi_0,$ $A= t \Id$ and $g$ as above shows that for any $\mathbf{z} \in B^*_{2n}(0,r_0),$ there exists $\mathbf{z}^* \in B^*_{2n}(0,2 r_0)$ for which $\Phi(\mathbf{z}^*)= t \mathbf{z}.$ Moreover, the last equation implies that 
$$t |\mathbf{z}| \ge t |\mathbf{z}^*| - |g(\mathbf{z}^*)| \ge  t |\mathbf{z}^*| - t/2 |\mathbf{z}^*|.$$
Hence, $|\mathbf{z}^*| \le 2 |\mathbf{z}|.$ The proof is finished.
\end{proof}
For each $\mathbf{z} \in B^*_{2n}(0,r_0),$ define $f(z):=F(z, \mathbf{z}^*,1)$ and $z^*:=1-|\mathbf{z}^*|+ i |\mathbf{z}^*|.$ It is clear that $f$ and $z^*$ satisfy the two conditions (\ref{condition00}) and (\ref{condition0}).

\subsection{Analytic discs partly attached to $(\R^+)^n$ in $\C^n$} \label{subsec_R+}
The goal of this subsection is to construct a family $F'$ of analytic discs which somewhat resembles the one in Proposition \ref{pro_specialanalyticdisc} and partly attached to $(\R^+)^n$ in $\C^n,$ where $\R^+$ is the set of nonnegative real numbers. The arguments used in the last subsection do not permit us to control the position of the part of the boundary of the disc in $\R^n.$ The idea is to construct  discs which look like the image of $F$ under the map $(z_1,\cdots,z_n) \longmapsto (z_1^2,\cdots,z_n^2),$ this image is half-attached to $(\R^+)^n,$ where $F$ is the family in the last subsection.

At the end of this subsection, we also introduce an another family $F'_{\boldsymbol{\tau}}$ of discs half-attached to $\R^n$ parametrized by $\boldsymbol{\tau} \in B_n(0,2)$ which contains $F'$ as a subfamily. Let us explain why we need such $F'_{\boldsymbol{\tau}}.$ In the general case considered in Section \ref{secgeneral}, the required analytic discs in Proposition \ref{pro_familydiscK} can be obtained as a small perturbation of $F'.$ Due to the nonsmoothness of $(\R^+)^n$ (or the submanifold $K$ with singularity in the general case), any family of discs partly attached to $(\R^+)^n$ is generally no longer so when being perturbed. Hence, in order to control the perturbed family, one should embed $F'$ in the bigger family $F'_{\boldsymbol{\tau}}$ which is more stable under perturbation.

Define 
\begin{align} \label{def_rho12}
\rho_1(\theta):= \frac{1}{2\pi(\cos\theta -1)} \quad \text{and} \quad \rho_2(\theta):=- \frac{\sin \theta}{2\pi(\cos\theta-1)^2} , 
\end{align}
for $\theta \in [-\pi, \pi].$ 

\begin{lemma} \label{le_tinhcacdaohamcuau} Let $u$ be harmonic function on $\D$ and smooth on $\overline{\D}.$ Assume that $u$ vanishes on $\partial^+ \D.$ Then, we have $\partial_y u(1)=0,$ $\partial^2_y u(1)=\partial_x u(1),$ $\partial^2_x u(1)= - \partial_x u(1)$  and  
$$\partial_x u(1)= \int_{-\pi}^{\pi} u(e^{i\theta}) \rho_1(\theta) d\theta \quad \text{and} \quad \partial_x \partial_y u(1)=\int_{-\pi}^{\pi} u(e^{i\theta}) \rho_2(\theta) d\theta.$$
\end{lemma}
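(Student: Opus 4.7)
The plan is to split the five claimed identities into three groups and handle each by an elementary mechanism: tangential vanishing on the arc $\partial^+\D$, harmonicity, and differentiation under the integral in the Poisson representation (\ref{equ_harmonicextension}).

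First I would exploit the fact that $u\equiv 0$ on $\partial^+\D$, so the one-variable function $\theta\mapsto u(e^{i\theta})$ vanishes identically on $(-\pi/2,\pi/2)$; hence every $\theta$-derivative of it at $\theta=0$ is zero. Writing $\partial_\theta=-\sin\theta\,\partial_x+\cos\theta\,\partial_y$ and evaluating at $\theta=0$ immediately gives $\partial_y u(1)=0$. Differentiating once more, using the Leibniz rule together with the already established vanishing of the first tangential derivative, I obtain $0=\partial_\theta^2 u(e^{i\theta})|_{\theta=0}=-\partial_x u(1)+\partial_y^2 u(1)$, which yields $\partial_y^2 u(1)=\partial_x u(1)$. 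The normal identity $\partial_x^2 u(1)=-\partial_x u(1)$ is then just the Laplace equation $\partial_x^2 u+\partial_y^2 u=0$ at $z=1$.

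For the integral formulas I would differentiate the Poisson representation (\ref{equ_harmonicextension}) under the integral sign. Setting $P(z,e^{i\theta})=(1-|z|^2)/|e^{i\theta}-z|^2$ with $z=x+iy$, and using $|e^{i\theta}-z|^2=1-2x\cos\theta-2y\sin\theta+x^2+y^2$, a direct calculation gives
\begin{equation*}
\partial_x P(z,e^{i\theta})\big|_{z=1}=\frac{1}{\cos\theta-1},\qquad \partial_x\partial_y P(z,e^{i\theta})\big|_{z=1}=\frac{-\sin\theta}{(1-\cos\theta)^2};
\end{equation*}
dividing by $2\pi$ produces exactly $\rho_1$ and $\rho_2$. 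The kernels are singular at $\theta=0$, but this is harmless: the boundary trace $u(e^{i\theta})$ vanishes identically on $(-\pi/2,\pi/2)$, so the effective support of each integrand is bounded away from $\theta=0$ and the differentiation under the integral is legitimate.

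The only mildly delicate point is the mixed second derivative. Here I would first check that $\partial_y P(z,e^{i\theta})|_{z=1}\equiv 0$, because both the factor $1-|z|^2$ and the boundary term $y$ vanish at $z=1$. Consequently, when $\partial_x$ is applied to $\partial_y P=N_y/D^2$, only the term where $\partial_x$ hits the numerator $N_y$ survives, and this single surviving term simplifies against $D^2|_{z=1}=4(1-\cos\theta)^2$ to produce the stated clean formula for $\rho_2$. This bookkeeping is the main obstacle; everything else in the proof is routine.
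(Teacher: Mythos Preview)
Your proposal is correct and follows essentially the same approach as the paper: the first three identities come from tangential differentiation along $\partial^+\D$ combined with harmonicity (the paper records these as equation~(\ref{eq_daohamvhailantheotheta''}) and (\ref{eq_daohamcap12tai1cuau})), and the two integral formulas come from differentiating the Poisson kernel at $z=1$. Your explicit remark that the effective support of the integrand is bounded away from $\theta=0$, and your observation that $\partial_y P|_{z=1}\equiv 0$ so only $(\partial_x N_y)/D^2$ survives in the mixed derivative, are useful clarifications that the paper leaves implicit in its direct kernel computation.
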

\begin{proof}
Firstly, observe that for an arbitrary $\mathcal{C}^2$ function $u(x+ i y)$ on $\overline{\D},$ we have 
\begin{align} \label{eq_daohamvhailantheotheta''}
\partial_{\theta} u(e^{i\theta})|_{\theta=0}=\partial_y u(1)\quad  \text{and} \quad \partial^2_{\theta} u(e^{i\theta})|_{\theta=0}= \partial_y^2 u(1)- \partial_x u(1).
\end{align}
Now let $u$ be the function in the statement.  The last two equalities combined with the fact that $u|_{\partial^+ \D}\equiv 0$ imply that 
\begin{align} \label{eq_daohamcap12tai1cuau}
\partial_y u(1)=0,\quad \partial_y^2 u(1)= \partial_{\theta}^2 u(1)+ \partial_x u(1)=\partial_x u(1).
\end{align}
Since $\Delta u(z)=0,$  we get $\partial^2_x u(1)= - \partial^2_y u(1)= -\partial_x u(1).$ On the other hand, it is computed in the proof of Lemma \ref{le_version8existenceu}  that $\partial_x u(1)= \int_{-\pi}^{\pi} u(e^{i\theta}) \rho_1(\theta) d\theta.$ 
Differentiating the Poisson kernel at $(1,0)$ gives 
\begin{align*}
\partial_x|_{x=1} \partial_y|_{y=0} \frac{1- |z|^2}{2\pi |e^{i\theta}- z|^2}&= 2\partial_x|_{x=1}  \frac{-y |e^{i\theta}-z|^2+(1-x^2-y^2)\sin \theta- y(1- |\mathbf{z}|^2) }{2\pi |e^{i\theta}- (x+i y)|^4}\bigg|_{y=0}\\
&=2 \sin\theta \, \partial_x|_{x=1} \frac{1- x^2}{2\pi(x^2- 2 x \cos\theta+1)^2}\\
&= 4 \sin \theta \, \frac{-x}{2\pi(x^2- 2 x \cos\theta+1)^2}\bigg|_{x=1} = - \frac{\sin \theta}{2\pi(\cos\theta-1)^2}= \rho_2(\theta).
\end{align*} 
Combining this with (\ref{equ_harmonicextension}) shows that $\partial_x \partial_y u(1)=\int_{-\pi}^{\pi} u(e^{i\theta}) \rho_2(\theta) d\theta.$ The proof is finished.
\end{proof}

\begin{corollary} \label{cor_khaitrientaylorcuaftai1} Let $u$ be a function as in Lemma \ref{le_tinhcacdaohamcuau}.  There are smooth functions $g_1,$ $g_2$ defined on $[0,1]$ and $g_3$ defined on $[-\pi/2,\pi/2]$ so that for any $s \in [0,1],$ we have  
$$u(1- s)=- s \partial_x u(1) - s^2 \frac{\partial_x u(1)}{2} + s^3 g_1(s) \quad  \text{and} \quad -\mathcal{T}_1 u(1-s)=  s^2 \frac{\partial_x \partial_y u(1)}{2}+ s^3 g_2(s),$$
and 
$$-\mathcal{T}_1 u(e^{i\theta})=-\partial_x u(1) \theta- \frac{\partial_x \partial_y u(1)}{2} \theta^2+ \theta^3 g_3(\theta),$$
for any $\theta \in [-\pi/2, \pi/2].$ Moreover, there is a constant $c$ independent of $u$ for which $\|g_j\|_0 \le c \|u\|_{4,\partial \D}$ for $j=1,2,3.$
\end{corollary}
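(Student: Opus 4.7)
The plan is to derive all three formulas by Taylor expansion at the base point, with the low-order coefficients computed using Lemma \ref{le_tinhcacdaohamcuau}, the Cauchy--Riemann equations for the holomorphic function $-\mathcal{T}_1 u + iu$ on $\D$, and the harmonicity of $-\mathcal{T}_1 u$. Each $g_j$ will be defined as the Taylor remainder divided by the appropriate power of the argument, and the uniform bound $\|g_j\|_0 \le c\|u\|_{4,\partial\D}$ will follow from the integral form of the remainder together with the continuity (\ref{ine_chuancuaT}) of the Hilbert transform and the interior estimate (\ref{ine_danhgiachuaCkcuauvoibien}).

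Concretely, for the first expansion I would write
\begin{equation*}
u(1-s)=u(1)-s\,\partial_x u(1)+\frac{s^2}{2}\,\partial_x^2 u(1) - \int_0^s \frac{(s-t)^2}{2}\,\partial_x^3 u(1-t)\,dt.
\end{equation*}
Since $1\in\partial^+\D$ we have $u(1)=0$, and by Lemma \ref{le_tinhcacdaohamcuau} we have $\partial_x^2 u(1)=-\partial_x u(1)$; this immediately yields the stated formula with $g_1(s)$ equal to $s^{-3}$ times the integral remainder (which is smooth in $s$ when $u$ is smooth on $\overline{\D}$). For the second expansion one applies the same Taylor scheme to $-\mathcal{T}_1 u$ along the radial segment $\{1-s:0\le s\le 1\}$. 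The vanishing $-\mathcal{T}_1 u(1)=0$ is built into the definition of $\mathcal{T}_1$; the Cauchy--Riemann equations for $-\mathcal{T}_1 u + iu$ combined with $\partial_y u(1)=0$ from Lemma \ref{le_tinhcacdaohamcuau} give $\partial_x(-\mathcal{T}_1 u)(1)=\partial_y u(1)=0$; differentiating once more in $x$ yields $\partial_x^2(-\mathcal{T}_1 u)(1)=\partial_x\partial_y u(1)$. The expansion then follows with $g_2(s)$ equal to $s^{-3}$ times the corresponding integral remainder.

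For the third expansion I would work on the circle itself and Taylor expand in $\theta$ at $\theta=0$. The key step is to apply identity (\ref{eq_daohamvhailantheotheta''}) to $-\mathcal{T}_1 u$ rather than to $u$, obtaining
\begin{align*}
\partial_\theta(-\mathcal{T}_1 u)(e^{i\theta})\big|_{\theta=0} &= \partial_y(-\mathcal{T}_1 u)(1),\\
\partial_\theta^2(-\mathcal{T}_1 u)(e^{i\theta})\big|_{\theta=0} &= \partial_y^2(-\mathcal{T}_1 u)(1)-\partial_x(-\mathcal{T}_1 u)(1).
\end{align*}
The Cauchy--Riemann equations give $\partial_y(-\mathcal{T}_1 u)(1)=-\partial_x u(1)$; the harmonicity of $-\mathcal{T}_1 u$ combined with the identity $\partial_x^2(-\mathcal{T}_1 u)(1)=\partial_x\partial_y u(1)$ from the previous step yields $\partial_y^2(-\mathcal{T}_1 u)(1)=-\partial_x\partial_y u(1)$; together with $\partial_x(-\mathcal{T}_1 u)(1)=0$ this produces the quadratic coefficient $-\tfrac12\partial_x\partial_y u(1)$, as required. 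The degree-three integral remainder in $\theta$ defines $g_3$.

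The only technical point is the uniform bound. The integral remainder in each of the three expansions involves a third derivative of $u$ or of $-\mathcal{T}_1 u$ on the segment or arc, hence is controlled in sup norm by the $\mathcal{C}^3$-norm on $\overline{\D}$. By the interior estimate (\ref{ine_danhgiachuaCkcuauvoibien}) (for any fixed $\beta\in(0,1)$) this is bounded by the $\mathcal{C}^{3,\beta}(\partial\D)$-norm; combined with the continuity (\ref{ine_chuancuaT}) of $\mathcal{T}_1$ and the elementary bound $\|u\|_{3,\beta,\partial\D}\le \|u\|_{4,\partial\D}$, this gives $\|g_j\|_0 \le c\|u\|_{4,\partial\D}$ for a constant $c$ independent of $u$. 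There is no deep obstacle in the proof: it is essentially a bookkeeping exercise, and the only care required is to use harmonicity and the Cauchy--Riemann equations consistently to express every derivative of $-\mathcal{T}_1 u$ at $1$ that appears in terms of the two quantities $\partial_x u(1)$ and $\partial_x\partial_y u(1)$.
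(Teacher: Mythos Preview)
Your proposal is correct and follows essentially the same route as the paper: both proofs compute the low-order Taylor coefficients at $1$ (respectively at $\theta=0$) using Lemma~\ref{le_tinhcacdaohamcuau}, the Cauchy--Riemann relations for $-\mathcal{T}_1 u + iu$, and harmonicity, then define $g_j$ as the third-order Taylor remainder and bound it via $\|u\|_{3,\overline{\D}},\|\mathcal{T}_1 u\|_{3,\overline{\D}}\lesssim \|u\|_{4,\partial\D}$ using (\ref{ine_chuancuaT}) and (\ref{ine_danhgiachuaCkcuauvoibien}). The only cosmetic difference is that you write the remainders explicitly in integral form, whereas the paper leaves them implicit.
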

\begin{proof} This is an analogue of  Lemma \ref{le_taylortai1aacuau}. Recall that we have 
$$- \partial_x \mathcal{T}_1 u(1)= \partial_y u(1)=0 \quad  \text{and} \quad - \partial_x \mathcal{T}_1 u(z)= \partial_y u(z).$$
Thus, $-\partial^2_x \mathcal{T}_1 u(z)= \partial_x \partial_y u(z).$ Letting $z=1$ in the last equality gives $-\partial^2_x \mathcal{T}_1 u(1)= \partial_x \partial_y u(1).$ Using the last equalities and (\ref{eq_daohamcap12tai1cuau}) and Taylor's expansions at $s=0$ for $u(1-s)$ and for $-\mathcal{T}_1u(1-s),$ we get $g_1,g_2$ and the first two equalities. By (\ref{eq_daohamvhailantheotheta''}), we have 
$$-\partial_{\theta}\mathcal{T}_1 u(e^{i\theta})=- \partial_y \mathcal{T}_1 u(1)=- \partial_x u(1)$$
and 
$$-\partial^2_{\theta}\mathcal{T}_1 u(e^{i\theta})=-\partial^2_y\mathcal{T}_1 u(1) - \partial_x \mathcal{T}_1 u(1)=-\partial^2_y\mathcal{T}_1 u(1)= - \partial_x \partial_y u(1).$$ 
This combined with Taylor's expansion at $\theta=0$ of $-\mathcal{T}_1 u(e^{i\theta})$ gives $g_3$ and the third equality. Since $g_j$ are the remainder in Taylor's expansions up to the order $2,$ we also see that there is a constant $c$ independent of $u$ so that for $1\le j \le 3,$ 
$$\|g_j\|_0 \le \max\{ \|u\|_{3,\overline{\D}}, \|\mathcal{T}_1 u\|_{3,\overline{\D}}\} \le c \|u\|_{4,\partial \D},$$
by (\ref{ine_danhgiachuaCkcuauvoibien}) and (\ref{ine_chuancuaT}).  The proof is finished.
\end{proof}

Let $\tilde{z} \in \D,$ $\delta \in (0,1]$ and $\gamma \in (0,1].$ We want to construct a function $u$ on $\partial \D$ which is differentiable enough such that $\partial_x u(1)$ and $\partial_x \partial_y u(1)$ equal to prescribed values. Note that we always identify $u$ with its harmonic extension on $\D.$ Precisely, we want to choose $u$ so that 
\begin{align} \label{eq_choiceofdaohamu}
 - \delta \partial_x u(1) -\delta^2 \frac{\partial_x u(1)}{2}= \Im \tilde{z} \quad  \text{and} \quad   \delta^2 \frac{\partial_x \partial_y u(1)}{2}= \Re \tilde{z}- \gamma.
\end{align}
The last system is equivalent to 
 \begin{align} \label{eq_choice2ofdaohamu}
\partial_x u(1)= -\frac{2\Im \tilde{z}}{\delta(2+\delta)} \quad  \text{and} \quad  \partial_x \partial_y u(1)= -\frac{2(\gamma- \Re \tilde{z})}{\delta^2} \cdot
\end{align}
In order to construct a such $u$ satisfying the last property, we will need the following lemma.

 \begin{lemma} \label{le_chonmayhamtuyentinhdoclap} Let $m$ be a positive integer. Let $\{a_j\}_{1\le j\le m}$ be real smooth functions on $\partial \D$ such that they are linearly independent in $\mathcal{C}^{\infty}(\partial \D).$ Then there exist $b_j \in \mathcal{C}^{\infty}(\partial \D)$ with $1\le j \le m$ so that 
$$\int_{-\pi}^{\pi} b_j(e^{i\theta}) a_{j'}(e^{i\theta}) d\theta= \delta^j_{j'},$$
 for all $1\le j,j' \le m,$ where $\delta^j_{j'}$ is the Kronecker delta.   
 \end{lemma}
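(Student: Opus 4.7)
The plan is to reduce the problem to a piece of finite-dimensional linear algebra on the span $V := \mathrm{span}\{a_1,\dots,a_m\} \subset \mathcal{C}^\infty(\partial \D)$, using the $L^2$-pairing $\langle a, b \rangle := \int_{-\pi}^\pi a(e^{i\theta}) b(e^{i\theta})\, d\theta$. Since the desired $b_j$ are not required to lie in any particular subspace, it is natural to look for them inside $V$ itself, as suitable linear combinations of the $a_l$'s; then smoothness will come for free.

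Concretely, I would form the Gram matrix $G = (G_{jj'})_{1\le j,j'\le m}$ with $G_{jj'} := \int_{-\pi}^\pi a_j(e^{i\theta}) a_{j'}(e^{i\theta})\, d\theta$, and check that $G$ is invertible. It is symmetric and positive semi-definite by construction, since for any $c=(c_j) \in \R^m$ we have $c^{T}Gc = \int_{-\pi}^\pi \bigl(\sum_j c_j a_j(e^{i\theta})\bigr)^2 d\theta \ge 0$. If $c^{T}Gc=0$, then $\sum_j c_j a_j$ vanishes almost everywhere on $\partial \D$, hence identically by continuity, which forces $c=0$ by the assumed linear independence of $\{a_j\}$ in $\mathcal{C}^\infty(\partial \D)$. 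Thus $G$ is positive definite, in particular invertible.

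I would then define
$$b_j := \sum_{l=1}^m (G^{-1})_{jl}\, a_l \quad \text{for } 1\le j \le m,$$
which is a finite linear combination of smooth functions, hence smooth. A direct computation gives
$$\int_{-\pi}^\pi b_j(e^{i\theta}) a_{j'}(e^{i\theta})\, d\theta = \sum_{l=1}^m (G^{-1})_{jl}\, G_{lj'} = (G^{-1}G)_{jj'} = \delta^j_{j'},$$
which is the required biorthogonality relation.

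There is no serious obstacle here: the statement is essentially the existence of a dual basis with respect to a nondegenerate symmetric bilinear form on a finite-dimensional subspace of $\mathcal{C}^\infty(\partial \D)$. The only point that needs a (trivial) justification is passing from linear independence in $\mathcal{C}^\infty(\partial \D)$ to the nondegeneracy of the $L^2$-pairing restricted to $V$, and this is immediate from the continuity of the $a_j$.
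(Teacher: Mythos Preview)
Your proof is correct. It differs from the paper's argument in a mildly interesting way: the paper works in the full space $\mathcal{C}^{\infty}(\partial \D)$, defines the linear functionals $L_j(v)=\int a_j v\,d\theta$, argues that the $L_j$ are linearly independent (via density of $\mathcal{C}^{\infty}$ in $L^2$), and then invokes the abstract linear-algebra fact that for linearly independent functionals one can find elements $b_j$ with $L_{j'}(b_j)=\delta^j_{j'}$. Your argument instead stays inside the finite-dimensional span $V=\mathrm{span}\{a_1,\dots,a_m\}$, observes that the $L^2$-pairing is positive definite there (this is where continuity of the $a_j$ enters), and writes down the dual basis explicitly via the inverse Gram matrix. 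Your route is more constructive and avoids the appeal to density; the paper's route is slightly more abstract but equally short. Either is perfectly adequate for this lemma.
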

 \begin{proof} Let $L_j: \mathcal{C}^{\infty}(\partial \D) \rightarrow \R$ be the linear functional defined by 
$$L_j(v)= \int_{-\pi}^{\pi} a_j(e^{i\theta}) v(e^{i\theta}) d \theta,$$ 
for $v \in \mathcal{C}^{\infty}(\partial \D)$ and $1\le j \le m.$  The linear independence of $a_j$ and the density of smooth functions in $L^2(\partial \D)$ imply that $\{L_j\}_{1\le j \le m}$ are linearly independent. A basic result of linear algebra says that for any $1\le j \le m,$ 
$$\big(\bigcap_{j' \not =j} \Ker L_{j'} \big) \backslash \Ker L_{j} \not = \varnothing.$$
In the other words, there is  $b_j \in \mathcal{C}^{\infty}(\partial \D)$ satisfying $L_{j'}(b_j)= \delta^j_{j'}.$ The proof is finished.
 \end{proof}

\begin{lemma} \label{le_dinhnghiau0u1}
There exist  two functions $u_1(e^{i\theta}), u_2(e^{i\theta}) \in \mathcal{C}^{\infty}(\D)$ vanishing on $\partial^+\D $ such that 
\begin{align} \label{eq_u0u1daoham}
\partial_x u_1(1)=\partial_x\partial_y u_2(1)=1 \quad  \text{and} \quad \partial_x u_2(1)=\partial_x\partial_y u_1(1)=0.
\end{align}
\end{lemma}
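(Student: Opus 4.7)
I will translate the derivative conditions (\ref{eq_u0u1daoham}) into two linear integral conditions via Lemma \ref{le_tinhcacdaohamcuau}, and then produce the required functions by showing that two naturally associated linear functionals are linearly independent on the space of smooth functions vanishing on $\partial^+\D$. This is essentially a variant of the argument used in Lemma \ref{le_chonmayhamtuyentinhdoclap}, applied in a restricted setting.

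Set $V := \{v \in \mathcal{C}^\infty(\partial\D) : v \equiv 0 \text{ on } \partial^+\D\}$. By Lemma \ref{le_tinhcacdaohamcuau}, for $u \in V$ we have $\partial_x u(1) = L_1(u)$ and $\partial_x\partial_y u(1) = L_2(u)$, where
$$L_k(v) := \int_{-\pi}^\pi v(e^{i\theta})\rho_k(\theta)\, d\theta, \qquad k\in\{1,2\}.$$
Thus the system (\ref{eq_u0u1daoham}) is equivalent to finding $u_1, u_2 \in V$ with $L_k(u_j) = \delta^j_k$. The integrals make sense on $V$: the only singularities of $\rho_1, \rho_2$ (see (\ref{def_rho12})) sit at $\theta = 0$, which lies in the interior of $\partial^+\D$ where every $v \in V$ vanishes, so $v \rho_k$ is smooth and supported in the closed arc $\partial^-\D := \partial\D \setminus \partial^+\D$.

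The main step is to verify that $L_1$ and $L_2$ are linearly independent on $V$. Suppose $c_1 L_1 + c_2 L_2 \equiv 0$ on $V$. Testing against arbitrary $v \in V$ with support compactly contained in the interior of $\partial^-\D$ (on which $\rho_1, \rho_2$ are smooth) forces the pointwise identity $c_1 \rho_1(\theta) + c_2 \rho_2(\theta) = 0$ there, hence on all of $\partial^-\D$ by continuity. By (\ref{def_rho12}) this reads $c_1(\cos\theta - 1) = c_2 \sin\theta$ for $\theta$ in the corresponding subset of $[-\pi,\pi]$; evaluating at $\theta = \pi$ yields $c_1 = 0$, and then $c_2 \sin\theta \equiv 0$ forces $c_2 = 0$.

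Given this independence, the linear map $\Lambda : V \to \R^2$ defined by $\Lambda(v) := (L_1(v), L_2(v))$ is surjective: otherwise its image would lie in the kernel of some nonzero linear form on $\R^2$, contradicting the independence just established. Any preimages $u_1, u_2 \in V$ of the standard basis vectors $(1,0)$ and $(0,1)$ of $\R^2$ satisfy the lemma, with smoothness of the associated harmonic extensions on $\overline\D$ automatic from (\ref{ine_danhgiachuaCkcuauvoibien}). The only mildly delicate point in this plan is the linear independence check above; no substantial obstacle is anticipated.
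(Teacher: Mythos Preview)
Your proof is correct and follows essentially the same approach as the paper: translate the derivative conditions into the integral conditions $\int u_j \rho_k\, d\theta = \delta^j_k$ via Lemma \ref{le_tinhcacdaohamcuau}, establish linear independence of the associated functionals, and then pick a ``dual basis.'' The only cosmetic difference is that the paper introduces a cutoff $\chi$ supported in $\partial^-\D$, sets $a_k = \chi\rho_k$, and applies Lemma \ref{le_chonmayhamtuyentinhdoclap} directly to the smooth functions $a_1,a_2$ (taking $u_j = \chi b_j$), whereas you work directly on the subspace $V$ of functions vanishing on $\partial^+\D$ and check independence of $L_1,L_2$ by hand; both routes are equally short.
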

\begin{proof} By  Lemma \ref{le_tinhcacdaohamcuau}, the condition (\ref{eq_u0u1daoham}) is equivalent to
 $$\int_{-\pi}^{\pi} u_1 \rho_1 d\theta= \int_{-\pi}^{\pi} u_2 \rho_2 d\theta=1 \quad \text{and} \quad \int_{-\pi}^{\pi} u_1 \rho_2 d\theta= \int_{-\pi}^{\pi} u_2 \rho_1 d\theta=0.$$
 Put $\partial^- \D= \partial \D \backslash \partial^+ \D.$ Let $\chi \in \mathcal{C}^{\infty}(\partial \D)$ with $\supp \chi \subset \partial^- \D$ and $\chi \not \equiv 0.$  
Let  $a_1=\chi \rho_1(\theta), \,a_2= \chi \rho_2(\theta).$ Observe that these functions are linearly independent in $\mathcal{C}^{\infty}(\partial\D).$ This allows us to apply Lemma \ref{le_chonmayhamtuyentinhdoclap} to $a_1, a_2.$ Hence,  we obtain $b_1, b_2 \in \mathcal{C}^{\infty}(\partial \D)$ with 
$$\int_{-\pi}^{\pi} b_j a_{j'} d \theta= \delta^{j}_{j'}.$$
Let $u_1:= \chi b_1$ and $u_2:= \chi b_2.$  One easily checks that $u_1$ and $u_2$ satisfies the desired property. The proof is finished.
\end{proof}
Define 
\begin{align}\label{choice of u}
u_{\tilde{z},\delta, \gamma}(e^{i\theta})=-u_1(e^{i\theta}) \frac{2\Im \tilde{z}}{\delta(2+\delta)}-  u_2(e^{i\theta}) \frac{2(\gamma- \Re \tilde{z})}{\delta^2} \cdot
\end{align}
We deduce from Lemma \ref{le_dinhnghiau0u1} and (\ref{eq_choice2ofdaohamu}) that $u_{\tilde{z},\delta, \gamma}$ enjoys the property (\ref{eq_choiceofdaohamu}). The following explains our choice of $u_{\tilde{z},\delta, \gamma}.$

\begin{lemma} \label{le_analyticdiscuz0deltagamma} Let $(\tilde{z},\delta,\gamma) \in \D \times (0,1]^2$ so that 
\begin{align} \label{ine_conditionsondeltagamma}
\gamma \ge  2 |\tilde{z}| \quad \text{and} \quad  2 \sqrt{\gamma} \ge \delta \ge \frac{\sqrt{\gamma}}{2}.
\end{align}
Then, there are a positive constant $\theta_c$ independent of $(\tilde{z},\delta,\gamma)$ and a smooth function $g_{\tilde{z}, \delta, \gamma}(s)$ defined on $[0,1]$ depending smoothly on the parameter $(\tilde{z},\delta,\gamma)$ such that $\|g_{\tilde{z}, \delta, \gamma}\|_1$ is bounded independently of $(\tilde{z},\delta,\gamma)$  and the analytic disc 
$$f_{\tilde{z}, \delta, \gamma}:=\gamma -\mathcal{T}_1 u_{\tilde{z},\delta, \gamma}+ i u_{\tilde{z},\delta, \gamma}$$
 is $[e^{-i \theta_0}, e^{i\theta_0}]$-attached to $\R^+$ in $\C$ and $f_{\tilde{z}, \delta, \gamma}(1-\delta)= \tilde{z} + \delta^3 g_{\tilde{z}, \delta, \gamma}(\delta).$ Moreover, the quantities
$$\delta \|D_{\delta}\, g_{\delta}(\cdot)\|_0 \quad \text{and} \quad \|D_{(\tilde{z},\gamma)}g_{\tilde{z}, \delta, \gamma}(\cdot)\|_0$$
are bounded independently of $(\tilde{z},\delta,\gamma),$ where $g$ is considered as a function of  $(s,\tilde{z},\delta,\gamma)$ and $D_{(\tilde{z},\delta,\gamma)}$ is the differential with respect to $(\tilde{z},\delta,\gamma).$
\end{lemma}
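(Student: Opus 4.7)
The plan is to verify the three assertions directly from Corollary~\ref{cor_khaitrientaylorcuaftai1} and the prescribed derivative values of Lemma~\ref{le_dinhnghiau0u1}, exploiting the parameter balance $\delta \sim \sqrt{\gamma}$ and $|\tilde{z}| \le \gamma/2$. Write $u_{\tilde{z},\delta,\gamma} = c_1 u_1 + c_2 u_2$ with $c_1 := -\tfrac{2\Im\tilde{z}}{\delta(2+\delta)}$ and $c_2 := -\tfrac{2(\gamma-\Re\tilde{z})}{\delta^2}$; under the hypotheses one checks $|c_1| \le \sqrt{\gamma}$ and $|c_2| \le 12$, so $\|u_{\tilde{z},\delta,\gamma}\|_{4,\partial\D}$ is bounded by a universal constant. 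Since $u_{\tilde{z},\delta,\gamma}$ vanishes on $\partial^+\D$, the imaginary part $\Im f_{\tilde{z},\delta,\gamma} = u_{\tilde{z},\delta,\gamma}$ vanishes there too, and $f$ maps $\partial^+\D$ into $\R$; it remains to force $\Re f \ge 0$ on an arc of universal size and to compute $f(1-\delta)$.

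For the nonnegativity of $\Re f$, Corollary~\ref{cor_khaitrientaylorcuaftai1} and (\ref{eq_choice2ofdaohamu}) give
\[
\Re f(e^{i\theta}) = \gamma + A\theta + B\theta^2 + \theta^3 g_3(\theta),
\]
with $A := \tfrac{2\Im\tilde{z}}{\delta(2+\delta)}$, $B := \tfrac{\gamma-\Re\tilde{z}}{\delta^2}$, and $\|g_3\|_0 \le M$ for some universal $M$. The critical algebraic estimate is
\[
\frac{A^2}{4B} = \frac{(\Im\tilde{z})^2}{(2+\delta)^2(\gamma-\Re\tilde{z})} \le \frac{(\gamma/2)^2}{4(\gamma/2)} = \frac{\gamma}{8},
\]
so the quadratic $Q(\theta) := \gamma + A\theta + B\theta^2 \ge 7\gamma/8$ globally, and $B \in [1/8, 6]$. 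To choose $\theta_c$ independent of the parameters, I split $[-\theta_c,\theta_c]$ at $|\theta| = C_1\sqrt{\gamma}$ for a fixed $C_1 \ge 16$. On $|\theta| \ge C_1\sqrt{\gamma}$, the bound $|A\theta| \le \sqrt{\gamma}|\theta| \le \theta^2/C_1$ yields $Q(\theta) \ge (B - 1/C_1)\theta^2 \ge \theta^2/16$, so $\Re f \ge \theta^2/16 - M|\theta|^3 \ge 0$ whenever $|\theta| \le 1/(32M)$. On $|\theta| \le C_1\sqrt{\gamma}$ the cubic is of size at most $MC_1^3\gamma^{3/2}$, which is $\le \gamma/4$ once $\gamma$ is below a universal threshold $\gamma_0$; for $\gamma \ge \gamma_0$ the uniform lower bound $Q \ge 7\gamma_0/8$ dominates the cubic after a further universal shrinkage of $\theta_c$. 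Intersecting these size constraints delivers a universal $\theta_c > 0$.

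For the evaluation at $z = 1-\delta$, apply Corollary~\ref{cor_khaitrientaylorcuaftai1} with $s = \delta$ and $u = u_{\tilde{z},\delta,\gamma}$. The values of $\partial_x u(1)$ and $\partial_x\partial_y u(1)$ from (\ref{eq_choice2ofdaohamu}) were engineered precisely so that the lower-order Taylor terms telescope: direct substitution yields
\[
u(1-\delta) = \Im\tilde{z} + \delta^3 g_1(\delta), \qquad -\mathcal{T}_1 u(1-\delta) = -(\gamma-\Re\tilde{z}) + \delta^3 g_2(\delta),
\]
so that $f(1-\delta) = \gamma - (\gamma-\Re\tilde{z}) + i\Im\tilde{z} + \delta^3(g_2 + ig_1)(\delta) = \tilde{z} + \delta^3 g_{\tilde{z},\delta,\gamma}(\delta)$ with $g_{\tilde{z},\delta,\gamma} := g_2 + ig_1$. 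By linearity of the Taylor remainder in $u$, $g = c_1 h^{(1)} + c_2 h^{(2)}$, where $h^{(i)}$ are the fixed smooth remainder functions attached to $u_i$ with universal $\mathcal{C}^1$ norms on $[0,1]$. This gives both smoothness of $g$ in all variables and $\|g\|_1$ bounded. The weighted parameter derivatives then reduce to routine estimates on $c_1, c_2$: $|\partial_\delta c_1| \le \gamma/\delta^2 \le 4$ and $|\partial_\delta c_2| \le 6\gamma/\delta^3$, so $\delta|\partial_\delta c_i|$ is bounded; analogous explicit computations control the derivatives in $(\tilde{z},\gamma)$ with the weights dictated by the constraints $\delta^2 \ge \gamma/4$ and $|\Im\tilde{z}| \le \gamma/2$.

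The principal obstacle is the nonnegativity statement, because it must hold on an interval independent of the parameters even as $\gamma \to 0$: this is precisely what forces the multi-scale dissection above, matching the natural scaling $\theta \sim \sqrt{\gamma}$ that is built into the hypotheses relating $\delta$, $\gamma$, and $\tilde{z}$.
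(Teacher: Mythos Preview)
Your proof is correct, and for the evaluation $f_{\tilde{z},\delta,\gamma}(1-\delta)=\tilde{z}+\delta^3 g(\delta)$ together with the parameter estimates on $g$ it is essentially the same as the paper's. The genuine difference is in how you handle the nonnegativity of $\Re f_{\tilde{z},\delta,\gamma}(e^{i\theta})$ on a uniform arc.

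You estimate the full quadratic $Q(\theta)=\gamma+A\theta+B\theta^2$ from below by $7\gamma/8$, and then run a two-scale argument in $|\theta|$ (cutting at $C_1\sqrt{\gamma}$) together with a separate threshold in $\gamma$ to absorb the cubic remainder. This is sound, but it requires three cases and two auxiliary universal constants. The paper proceeds more directly: it splits the coefficient of $\theta^2$ in half, writing $\Re f=\tilde{f}_1+\tilde{f}_2$ with
\[
\tilde{f}_1(\theta)=\gamma+A\theta+\tfrac{B}{2}\theta^2,\qquad \tilde{f}_2(\theta)=\tfrac{B}{2}\theta^2+\theta^3 g_3(\theta).
\]
The discriminant of $\tilde{f}_1$ is, up to a positive factor, $\frac{(\Im\tilde{z})^2}{(2+\delta)^2}-\gamma(\gamma-\Re\tilde{z})\le (\Im\tilde{z})^2-\gamma^2/2\le 0$ by $\gamma\ge 2|\tilde{z}|$, so $\tilde{f}_1\ge 0$ for all $\theta$. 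Since $B/2\ge 1/16$, one has $\tilde{f}_2(\theta)\ge \theta^2(1/16-|\theta|\|g_3\|_0)\ge 0$ once $|\theta|\le 1/(16\|g_3\|_0)$. This immediately gives a universal $\theta_0$ with no case distinction and no dependence on $\gamma$. The device your multi-scale argument is substituting for is precisely this half-and-half split: one half of the quadratic absorbs the linear term via the discriminant, the other half absorbs the cubic.
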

\begin{proof}  Corollary (\ref{cor_khaitrientaylorcuaftai1}), (\ref{choice of u}) and (\ref{eq_choiceofdaohamu}) show that there exist  smooth functions $g_1,g_2$ defined on $[0,1]$ depending smoothly on  $(\tilde{z},\delta,\gamma)$  for which 
$$f(1- \delta)= \gamma + \Re \tilde{z} - \gamma+ \delta^3 g_1(\delta) + i\big(\Im \tilde{z}+ \delta^3 g_2(\delta)\big)= \tilde{z}+ \delta^3\big(g_1(\delta)+ i g_2(\delta)).$$
Hence, it is immediate to see that the function
$$g_{\tilde{z}, \delta, \gamma}(\delta):=g_1(\delta)+ i g_2(\delta)$$
 satisfies $f(1-\delta)= \tilde{z} + \delta^3 g(\delta).$ By the hypothesis on $(\tilde{z},\delta,\gamma),$ we have 
\begin{align}\label{ine_deltaz0gammadieukien}
\big|\frac{2\Im \tilde{z}}{\delta(2+\delta)}\big| \le 2, \quad   \frac{1}{4} \le \frac{2(\gamma- \Re \tilde{z})}{\delta^2}\le  12.
\end{align}
This yields that  $\|g_1\|_1$ and $\|g_2\|_1$ are bounded  independently of $(\tilde{z},\delta,\gamma),$ hence, so is $\|g\|_1.$ Estimating $\delta\|D_{\delta}\, g_{\tilde{z}, \delta, \gamma}(\cdot)\|_0$ and $\|D_{(\tilde{z},\gamma)}g_{\tilde{z}, \delta, \gamma}(\cdot)\|_0$ is done similarly.  

Now we prove that $f_{\tilde{z}, \delta, \gamma}$ is partly attached to $\R^+.$ To this end, it suffices to check the sign of the real part of  $f_{\tilde{z}, \delta, \gamma}.$ Using again Corollary (\ref{cor_khaitrientaylorcuaftai1}), (\ref{choice of u}) and (\ref{eq_choiceofdaohamu}) implies that for $\theta \in [-\pi/2, \pi/2],$ we have
$$\Re f_{\tilde{z}, \delta, \gamma}(e^{i\theta})=\gamma+ \frac{2\Im \tilde{z}}{\delta(2+\delta)}\theta + \frac{2(\gamma- \Re \tilde{z})}{\delta^2} \theta^2 + \theta^3 g_3(\theta),$$
where $g_3(\theta)$ is a smooth function on $[-\pi/2,\pi/2]$ whose supnorm is bounded  independently of $(\tilde{z},\delta,\gamma).$ Let $c$ be a such upper bound of $\|g_3\|_0.$   Put 
$$\tilde{f}_1(\theta):=\gamma+ \frac{2\Im \tilde{z}}{\delta(2+\delta)}\theta + \frac{\gamma- \Re \tilde{z}}{\delta^2} \theta^2$$
and 
$$\tilde{f}_2(\theta):= \frac{\gamma- \Re \tilde{z}}{\delta^2} \theta^2 + \theta^3 g_3(\theta).$$
We have $\Re f_{\tilde{z}, \delta, \gamma}(e^{i\theta})= \tilde{f}_1(\theta)+ \tilde{f}_2(\theta).$ By the second inequality of
(\ref{ine_deltaz0gammadieukien}), one sees that 
$$\tilde{f}_2(\theta) \ge \theta^2/8 + \theta^3 g_3(\theta) \ge \theta^2\big(1/8 - |\theta| \|g_3\|_0) \ge 0$$
provided that $|\theta| \le \min\{\pi/2, 1/(8c) \}.$  Observe that $\tilde{f}_1(\theta)$ is a quadratic polynomial in $\theta.$ Its discriminant equals to 
$$\frac{1}{\delta^2}\bigg[ \frac{\Im^2 \tilde{z}}{(2+ \delta)^2} - \gamma (\gamma - \Re \tilde{z}) \bigg] \le \frac{1}{\delta^2}\big[\Im^2 \tilde{z} - \gamma^2/2 \big] \le 0,$$
because $\delta \ge 0$ and $\gamma \ge 2 |\tilde{z}|.$ This means that $\tilde{f}_1(\theta)\ge 0$ for all $\theta.$ Hence, $\Re f_{\tilde{z}, \delta, \gamma}(e^{i\theta}) \ge 0$ for  
$|\theta| \le \theta_0:=\min\{\pi/2, 1/(8c) \}.$ The proof is finished.
\end{proof}

 Now let $\mathbf{z}=(z_1,\cdots,z_n) \in B^*_{2n}(0,\frac{1}{2n})$ and let $t \in (0,1].$ In the formula (\ref{choice of u}), let 
$$\gamma= 2 |\mathbf{z}|,\quad \delta= \sqrt{|\mathbf{z}|} \quad  \text{and} \quad \tilde{z}=z_j,$$
where $z_j$ is the $j^{th}$ component of $\mathbf{z},$ denote by $u'_{\mathbf{z}; j}$ the function $u_{\tilde{z}, \delta, \gamma}$ with the above choice of $(\tilde{z},\delta,\gamma).$

Define  $u'_{\mathbf{z},t}$ to be the vector of $\mathcal{C}^{\infty}(\D)^n$ whose $j^{th}$ component is equal $t u'_{\mathbf{z}; j},$ for $1\le j \le n.$ Extend $u'_{\mathbf{z},t}$ harmonically to $\D.$ Define 
\begin{align} \label{def_dinhghiaF'}
F'(z, \mathbf{z},t):= 2t (|\mathbf{z}|, \cdots, |\mathbf{z}|) -\mathcal{T}_1 (u'_{\mathbf{z},t})(z) + i u'_{\mathbf{z},t},
\end{align}
for $z \in \overline{\D},$ where  $(|\mathbf{z}|, \cdots, |\mathbf{z}|) \in \R^n.$ Then, $F'$ is a family of analytic discs half-attached to $\R^n$ and $F'(1,\mathbf{z},t)= t (|\mathbf{z}|, \cdots, |\mathbf{z}|) \in (\R^+)^n.$ The following is just a reformulation of  (\ref{eq_choice2ofdaohamu}).
\begin{align} \label{eq_daohamtheoxycuau'zt}
 \partial_x  u'_{\mathbf{z},t}(1)= -\frac{2t\Im \mathbf{z}}{\sqrt{|\mathbf{z}|}(2+ \sqrt{|\mathbf{z}|})} \quad  \text{and} \quad  \partial_x \partial_y u'_{\mathbf{z},t}(1)= -\frac{2t(2|\mathbf{z}|- \Re \mathbf{z})}{|\mathbf{z}|},
\end{align} 
where we wrote $|\mathbf{z}|$ for $(|\mathbf{z}|,\cdots, |\mathbf{z}|)$ in the last equality.

\begin{proposition}\label{pro_specialanalyticdisc'}  The map  $F': \overline{\D} \times B^*_{2n}(0,\frac{1}{2n}) \times (0,1] \rightarrow  \C^n$ is smooth and there exists two constants $r'_0 \in (0,1/4)$ and $c_0 >1$ such that the three following conditions hold:

$(i)$ for any $\mathbf{z} \in B^*_{2n}(0, \frac{1}{2n})$ and $t \in (0,1],$ the mapping $F'(\cdot, \mathbf{z},t)$ is a smooth analytic disc $[e^{ic_0^{-1}}, e^{ic_0^{-1}}]$-attached to $(\R^+)^n$ in $\C^n,$ and 
$$F'(1,\mathbf{z},t)=  2t (|\mathbf{z}|, \cdots, |\mathbf{z}|)  \in B_n(0,1) \cap (\R^+)^n,$$

$(ii)$ for any $\mathbf{z} \in B^*_{2n}(0,r'_0)$ and $t \in (0,1],$  there exists an $\mathbf{z}^* \in B^*_{2n}(0, 2 r'_0)$ for which 
$$F'(1-|\mathbf{z}^*|, \mathbf{z}^*,t)= t \mathbf{z}$$
and $|\mathbf{z}^*| \le 2| \mathbf{z}|,$ 

$(iii)$ for any $\mathbf{z} \in B^*_{2n}(0, \frac{1}{2n})$ and $t \in (0,1],$ we have
\begin{align} \label{ine_danhgiadaohamchoF'} 
 \|F'(\cdot, \mathbf{z},t)\|_{5} \le t c_0  \, \text{ and} \quad  \|D_{\mathbf{z}} F'(\cdot, \mathbf{z},t) \|_{4} \le  t c_0 |\mathbf{z}|^{-1},
 \end{align} 
 where $D_{\mathbf{z}}$ is the differential with respect to $\mathbf{z}.$
\end{proposition}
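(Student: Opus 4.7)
The plan is to mirror the proof of Proposition \ref{pro_specialanalyticdisc}, with Lemma \ref{le_analyticdiscuz0deltagamma} taking over the role played there by Lemma \ref{le_taylortai1aacuau}. I will treat (i), (iii), and then the delicate (ii) in turn.

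For (i), by construction each component $F'_j(\cdot,\mathbf{z},t)/t$ coincides with the disc $f_{z_j,\sqrt{|\mathbf{z}|},2|\mathbf{z}|}$ of Lemma \ref{le_analyticdiscuz0deltagamma}. The hypothesis (\ref{ine_conditionsondeltagamma}) holds because $|z_j|\le|\mathbf{z}|$ gives $\gamma=2|\mathbf{z}|\ge 2|z_j|$, while $\delta/\sqrt{\gamma}=1/\sqrt{2}\in[1/2,2]$. The lemma yields a uniform angle $\theta_c$ such that each component is $[e^{-i\theta_c},e^{i\theta_c}]$-attached to $\R^+$, so $F'(\cdot,\mathbf{z},t)$ is attached to $(\R^+)^n$ on that arc (take $c_0\ge\theta_c^{-1}$). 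The boundary value $F'(1,\mathbf{z},t)=2t|\mathbf{z}|(1,\ldots,1)$ has Euclidean norm $\le 2t\sqrt{n}\,|\mathbf{z}|<1$ and lies in $(\R^+)^n$. For (iii), the formulas (\ref{choice of u}) and (\ref{eq_daohamtheoxycuau'zt}) together with the elementary bounds $|\Im z_j|\le|\mathbf{z}|$ and $|2|\mathbf{z}|-\Re z_j|\le 3|\mathbf{z}|$ show that every scalar coefficient of $u_1,u_2$ appearing in $u'_{\mathbf{z};j}$ is uniformly bounded in $\mathbf{z}$; hence $\|u'_{\mathbf{z},t;j}\|_{5,\partial\D}\le ct$. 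The Hilbert-transform bound (\ref{ine_chuancuaT}) together with the interior estimate (\ref{ine_danhgiachuaCkcuauvoibien}) then gives $\|F'(\cdot,\mathbf{z},t)\|_{5,\overline{\D}}\le tc_0$. Differentiating those coefficients with respect to $\mathbf{z}$ introduces at most factors of order $|\mathbf{z}|^{-1}$ (coming from $\partial_{\mathbf{z}}(\Re z_j/|\mathbf{z}|)$ and similar expressions), and the same scheme delivers $\|D_{\mathbf{z}}F'(\cdot,\mathbf{z},t)\|_{4}\le tc_0|\mathbf{z}|^{-1}$.

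For (ii), I would set $\Phi(\mathbf{z}^*):=F'(1-\sqrt{|\mathbf{z}^*|},\mathbf{z}^*,t)$, this being the evaluation point at which Lemma \ref{le_analyticdiscuz0deltagamma} provides a clean expansion. Applying that lemma componentwise yields
\begin{equation*}
\Phi(\mathbf{z}^*)=t\mathbf{z}^*+t|\mathbf{z}^*|^{3/2}G(\mathbf{z}^*),
\end{equation*}
with $\|G\|_0$ bounded uniformly in $(\mathbf{z}^*,t)$. The derivative clause of Lemma \ref{le_analyticdiscuz0deltagamma} (both $\delta\|D_\delta g\|_0$ and $\|D_{(\tilde z,\gamma)}g\|_0$ are uniformly bounded), combined with the chain rule applied to $\delta(\mathbf{z}^*)=\sqrt{|\mathbf{z}^*|}$ and $\gamma(\mathbf{z}^*)=2|\mathbf{z}^*|$, yields $\|D_{\mathbf{z}^*}G\|_0=O(|\mathbf{z}^*|^{-1})$. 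Consequently the perturbation $g(\mathbf{z}^*):=t|\mathbf{z}^*|^{3/2}G(\mathbf{z}^*)$ satisfies $g(0)=0$ and is $M$-Lipschitz on $\overline{B}_{2n}(0,2r'_0)$ with $M=O(t\sqrt{r'_0})$. Writing $\Phi=A\mathbf{z}^*+g$ with $A=t\,\Id$ and choosing $r'_0$ small enough that $|A^{-1}|_{\ln}M<1/2$, Lemma \ref{le_implicitfunction2} applied with target $\tilde{\mathbf{z}}=t\mathbf{z}$ supplies the required $\mathbf{z}^*\in B^*_{2n}(0,2r'_0)$; the bound $|\mathbf{z}^*|\le 2|\mathbf{z}|$ follows from $t|\mathbf{z}^*|\le t|\mathbf{z}|+|g(\mathbf{z}^*)|\le t|\mathbf{z}|+(t/2)|\mathbf{z}^*|$.

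The main obstacle is (ii). The delicate balance is that the square-root reparametrization $\delta=\sqrt{|\mathbf{z}^*|}$ built into (\ref{choice of u}) forces the chain rule to produce a factor $|\mathbf{z}^*|^{-1}$ when differentiating $G$, while the Taylor remainder only supplies $|\mathbf{z}^*|^{3/2}$. These combine to $|\mathbf{z}^*|^{1/2}$, which is just small enough to make $g$ a contraction on a sufficiently small ball; this is precisely the role of the uniform bound $\delta\|D_\delta g\|_0=O(1)$ recorded in Lemma \ref{le_analyticdiscuz0deltagamma}. The remaining work is the same style of contraction-plus-inverse-function-theorem argument as in the proof of Proposition \ref{pro_specialanalyticdisc}, with constants and arithmetic adapted to the half-attachment geometry of $(\R^+)^n$.
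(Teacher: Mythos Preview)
Your proposal is correct and follows essentially the same route as the paper: for (i) and (iii) you verify that the triple $(\tilde z,\delta,\gamma)=(z_j,\sqrt{|\mathbf z|},2|\mathbf z|)$ satisfies the hypothesis of Lemma~\ref{le_analyticdiscuz0deltagamma} and read off the attachment and norm bounds; for (ii) you define $\Phi'(\mathbf z)=F'(1-\sqrt{|\mathbf z|},\mathbf z,t)$, use Lemma~\ref{le_analyticdiscuz0deltagamma} to write $\Phi'(\mathbf z)=t\mathbf z+t|\mathbf z|^{3/2}G(\mathbf z)$, show that the remainder is $O(t\sqrt{|\mathbf z|})$-Lipschitz via the derivative clauses of that lemma, and invoke Lemma~\ref{le_implicitfunction2}. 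This is exactly the paper's argument, and your closing remark about the $|\mathbf z^*|^{3/2}\cdot|\mathbf z^*|^{-1}=|\mathbf z^*|^{1/2}$ balance is precisely the mechanism the paper exploits when estimating $D_{\mathbf z}g'$.

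One small point worth flagging: you evaluate at $1-\sqrt{|\mathbf z^*|}$, whereas the proposition as stated asks for $F'(1-|\mathbf z^*|,\mathbf z^*,t)=t\mathbf z$. Your choice is in fact the correct one, since $\delta=\sqrt{|\mathbf z|}$ in the construction of $u'_{\mathbf z;j}$ and Lemma~\ref{le_analyticdiscuz0deltagamma} gives the expansion at $1-\delta$; the paper's own proof uses exactly this expansion, and the downstream application (the definition of $\Phi'^h$ in Section~\ref{subsec_withboundary}) also evaluates at $1-\sqrt{|\mathbf z|}$. So the discrepancy is a typo in the statement rather than an issue with your argument.
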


\begin{proof}  Since $F'(z,\mathbf{z},t)= t F'(z,\mathbf{z},1),$ it is enough to verify the three above conditions for $t=1.$ It is clear that  $(\gamma, \delta, \tilde{z})=(2|\mathbf{z}|, \sqrt{|\mathbf{z}|}, z_j)$ satisfies the condition (\ref{ine_conditionsondeltagamma}) for $1 \le j \le n$.  Hence, direct consequences of Lemma \ref{le_analyticdiscuz0deltagamma} and (\ref{def_dinhghiaF'}) show that there exists a constant $c_0>1$ for which  the property $(i)$ and $(iii)$ hold. It remains to verify $(ii).$ We will use the same idea as in the proof of Proposition \ref{pro_specialanalyticdisc}.

Fix $t \in (0, 1].$ Let $\Phi'(\mathbf{z}): =F'(1-|\mathbf{z}|, \mathbf{z},t)$ for $\mathbf{z} \in B^*_{2n}(0,\frac{1}{2n})$ and $\Phi'(0):=0.$ By the above reason and Lemma \ref{le_analyticdiscuz0deltagamma}, there exists a smooth map $g_{\mathbf{z}}(s): [0,1] \rightarrow \R^n$ depending smoothly on $\mathbf{z} \in  B^*_{2n}(0, \frac{1}{2n})$ such that  
$$\Phi'(\mathbf{z})=t \mathbf{z}+t|\mathbf{z}|^{3/2} g_{\mathbf{z}}(\sqrt{|\mathbf{z}|}).$$
Note that the homogeneity of $F'$ in $t$ implies that $g_{\mathbf{z}}$ is independent of $t.$ Put 
$$g'(\mathbf{z}):= t |\mathbf{z}|^{3/2}  g_{\mathbf{z}}(\sqrt{|\mathbf{z}|}).$$
Observe that  $g'(0)=0$ and
\begin{align*}
D_{\mathbf{z}} g'(\mathbf{z})= \frac{3t |\mathbf{z}|^{1/2}}{2}  g_{\mathbf{z}}(|\sqrt{|\mathbf{z}|})+ t  |\mathbf{z}|^{3/2}\big\{D_{\mathbf{z}} g_{\mathbf{z}}(\sqrt{|\mathbf{z}|})+  D_s g_{\mathbf{z}}(\sqrt{|\mathbf{z}|})D_{\mathbf{z}}\sqrt{|\mathbf{z}|}\big\}.
\end{align*}
Lemma \ref{le_analyticdiscuz0deltagamma} for $(\gamma, \delta, \tilde{z})=(2|\mathbf{z}|, \sqrt{|\mathbf{z}|}, z_j)$ implies that  $\sqrt{|\mathbf{z}|}D_{\mathbf{z}} g_{\mathbf{z}}(\sqrt{|\mathbf{z}|})$ and $D_{s} g_{\mathbf{z}}(\sqrt{|\mathbf{z}|})$ are bounded independently of $\mathbf{z}.$ As a consequence, we have
$$|D_{\mathbf{z}} g'(\mathbf{z}) | \le c |\mathbf{z}|^{1/2} t,$$
for some constant $c$ independent of $(\mathbf{z},t).$ Let $r'_0:= (2c)^{-2}/3.$ The last inequality yields that $|D_{\mathbf{z}} g'(\mathbf{z}) | \le t/2$ for $\mathbf{z} \in B^*_{2n}(0,3 r'_0 ).$  Thus, $g'$ is $t/2$-Lipschitz on $\overline{B}_{2n}(0, 2 r'_0).$  Applying Lemma \ref{le_implicitfunction2} to $\Phi'$ in place of $\Phi_0,$ $A= t \Id$ and $g'$ as above shows that for any $\mathbf{z} \in B^*_{2n}(0,r'_0),$ there exists $\mathbf{z}^* \in B^*_{2n}(0,2 r'_0)$ for which $\Phi'(\mathbf{z}^*)= t \mathbf{z}.$ Moreover, the last equation implies that 
$$t |\mathbf{z}| \ge t |\mathbf{z}^*| - |g(\mathbf{z}^*)| \ge  t |\mathbf{z}^*| - t/2 |\mathbf{z}^*|.$$
Hence, $|\mathbf{z}^*| \le 2 |\mathbf{z}|.$ The proof is finished.
\end{proof}

As explained at the beginning, let us now introduce a new parameter $\boldsym{\tau} \in B_n(0,2)$ and a family $F'_{\boldsym{\tau}}$ of analytic discs half-attached to $\R^n$ contains $F'$ as a subfamily. 

\begin{lemma} \label{le_dunghamungaphayu''} Let $u_1$ be the function in Lemma  \ref{le_dinhnghiau0u1}. Then, the function $\tilde{u}:= 10 u_1$ is smooth on $\partial \D$ and vanishes on $\partial^+ \D$ and  
\begin{align} \label{ine_daohamcuaungaphayu''}
\partial_x \tilde{u}(1)=10, \quad  \partial_{x} \partial_y \tilde{u}(1)=0.
\end{align}
\end{lemma}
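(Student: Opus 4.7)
The statement is essentially a direct corollary of Lemma \ref{le_dinhnghiau0u1} together with the linearity of derivatives, so my plan is simply to unpack what that earlier lemma already gives us.

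First I would observe that Lemma \ref{le_dinhnghiau0u1} provides a function $u_1 \in \mathcal{C}^{\infty}(\partial \D)$ (and hence smooth on $\overline{\D}$ by harmonic extension) that vanishes on $\partial^+\D$ and satisfies the normalization $\partial_x u_1(1) = 1$ and $\partial_x\partial_y u_1(1) = 0$. Smoothness of $\tilde{u} = 10 u_1$ on $\partial\D$ and the vanishing $\tilde{u}|_{\partial^+\D} \equiv 0$ are immediate from the corresponding properties of $u_1$ since multiplication by a constant preserves both.

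Next, by linearity of the partial derivative operators, we have
\begin{equation*}
\partial_x \tilde{u}(1) = 10\, \partial_x u_1(1) = 10 \cdot 1 = 10,
\end{equation*}
and
\begin{equation*}
\partial_x \partial_y \tilde{u}(1) = 10\, \partial_x\partial_y u_1(1) = 10 \cdot 0 = 0,
\end{equation*}
which gives exactly the two identities in \eqref{ine_daohamcuaungaphayu''}.

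There is no real obstacle here; the only thing one might want to double-check is that identifying the boundary function $u_1$ with its harmonic extension to $\D$ is consistent with the derivatives $\partial_x$ and $\partial_x\partial_y$ at $1$ being well-defined, but this is precisely the convention fixed in Section \ref{subsec_Hilberttransform} and already used in Lemma \ref{le_tinhcacdaohamcuau} and Lemma \ref{le_dinhnghiau0u1}. Thus the proof reduces to a one-line verification, and the lemma should be viewed as merely a convenient renaming and rescaling of $u_1$ to fix the value $\partial_x\tilde{u}(1) = 10$ for later use.
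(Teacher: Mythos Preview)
Your proof is correct and matches the paper's approach exactly: the paper simply states that the lemma ``is obvious by the properties of $u_1$,'' and your argument spells out precisely that observation via linearity.
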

\begin{proof} This is obvious by the properties of $u_1.$ The proof is finished.
\end{proof}
Let $\mathbf{z}$ and $t$ be as above.  Put 
\begin{align} \label{de_dinhnghiacuau'}
u'_{\mathbf{z},t, \boldsymbol{\tau}}(z):=u'_{\mathbf{z},t}(z)+ t \boldsymbol{\tau} \cdot \tilde{u}(z),
\end{align}
where $\boldsymbol{\tau} \in B_n(0,2).$ The parameter  $\boldsymbol{\tau}$ will play a role as a control parameter.  Define 
\begin{align} \label{de_dinhnghiacuaF'}
F'_{\boldsymbol{\tau}}(z, \mathbf{z},t):=2t (|\mathbf{z}|, \cdots, |\mathbf{z}|) - \mathcal{T}_1 u'_{\mathbf{z},t,\boldsymbol{\tau}}(z)+ i u'_{\mathbf{z},t,\boldsymbol{\tau}}(z),
\end{align}
for $\mathbf{z} \in B^*_{2n}(0, \frac{1}{2n}), t\in (0,1]$ and $\boldsymbol{\tau} \in B_n(0,2).$  By construction, $F'_{\boldsymbol{\tau}}$ is a family of discs half-attached to $\R^n$ and when $\boldsymbol{\tau}=0$ we have $F'_{0} \equiv F'$ which is the family constructed earlier. By choosing the constant $c_0$ in Proposition \ref{pro_specialanalyticdisc'}  big enough, for any $\mathbf{z} \in B^*_{2n}(0, \frac{1}{2n}), t\in (0,1]$ and $\boldsymbol{\tau} \in B_n(0,2),$ we have
\begin{align} \label{ine_danhgiadaohamchoF'tau} 
 \|D^j_{\boldsym{\tau}}F'_{\boldsym{\tau}}(\cdot, \mathbf{z},t)\|_{4} \le t c_0  \, \text{ and} \quad  \|D^j_{\boldsym{\tau}}D_{\mathbf{z}} F'_{\boldsym{\tau}}(\cdot, \mathbf{z},t) \|_{3} \le  t c_0 |\mathbf{z}|^{-1},
 \end{align} 
 for $j=0,1$  and
 \begin{align} \label{eq_daohamtheotautai0cuau'tau}
 D^2_{\boldsym{\tau}} F'_{\boldsym{\tau}}(1, \mathbf{z},t) \equiv 0, \quad D_{\boldsym{\tau}} u'_{\mathbf{z},t,\boldsym{\tau}}(1)=10t,
 \end{align}
where the right-hand side of the last equality denotes the diagonal matrix of order $n$ whose coefficients on the diagonal are all equal to $10t.$

\section{Analytic discs partly attached to $K$ } \label{secgeneral}

Fix a smooth Riemannian metric  on $X.$  Let $p_0$ be an arbitrary point of $K.$ Our goal is to construct special families of analytic discs  partly attached to $K$ in a small neighborhood of $p_0$ in $X.$  Since $K$ is a generic submanifold, its dimension is at least $n.$ We first study the case where $\dim_{\R} K=n.$ Then we deduce the case of higher dimension by considering  (local) generic submanifolds of $K$.  In what follows, the notations $\gtrsim$ and $\lesssim$ respectively mean $\ge$ and $\le$ up to a positive constant depending only on the geometry of $(K,X).$ 


\subsection{The case where $K$ has no singularity}
In this subsection, we consider the case where $K$ has no singularity and $\dim_{\R} K=n$.  The $\mathcal{C}^3$-differentiability of $K$ is enough for our proof. The local coordinates described in Lemma \ref{le_localcoordinates} below are used widely in the Cauchy-Riemann geometry. Since we need to use concrete estimates uniform in $p_0$, a complete proof will be presented. We refer to the beginning of Subsection \ref{subsec_Hilberttransform} for the notation of the norms of maps below.

\begin{lemma} \label{le_localcoordinates} There exist  constants $c_1, r_K>1$ depending only on $(K,X)$ and a local chart $(W_{p_0}, \Psi)$ around $p_0,$ where $\Psi: W_ {p_0} \rightarrow B_{2n}(0,r_K)$ is biholomorphic with $\Psi(p_0)=0$  such that the two following conditions hold:  

$(i)$ we have 
\begin{align*} 
\|\Psi\|_1 \le c_1, \quad \|\Psi^{-1}\|_1 \le c_1,
\end{align*}

$(ii)$ there is a $\mathcal{C}^{3}$ map $h$ from $\overline{B}_n(0,1)$ to $\R^n$ so that $h(0)=Dh(0)=0,$ where $Dh$ denotes the differential of $h,$ and
 $$\Psi(K \cap W_{p_0}) \supset \{(\mathbf{x}, h(\mathbf{x})): \mathbf{x} \in \overline{B}_n(0,1)\},$$
 where the canonical coordinates on $\C^n= \R^n + i \R^n$ are denoted by $\mathbf{z}= \mathbf{x}+ i \mathbf{y},$ and 
\begin{align} \label{ine_C1alphanorm}
\|h\|_{3} \le c_1.
\end{align}
\end{lemma}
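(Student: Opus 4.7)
The plan is to build $\Psi$ as a composition of three pieces: a holomorphic chart centered at $p_0$ coming from a fixed finite atlas of $X$, a translation sending $p_0$ to $0$, and a unitary rotation of $\C^n$ that straightens the tangent space $T_{p_0}K$ to $\R^n$. The graph description of $K$ will then be an immediate consequence of the implicit function theorem, and uniformity of the constants will come from compactness of $K$ combined with continuous dependence of $T_pK$ on $p$.

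Since $X$ is compact, fix once and for all a finite holomorphic atlas $\{(U_i,\varphi_i)\}$ with $\varphi_i(U_i)=B_{2n}(0,4)$ and with the $\mathcal{C}^3$-norms of every $\varphi_i$, of every $\varphi_i^{-1}$, and of every transition map bounded by a universal constant. For any $p_0\in K$ pick an index $i$ so that $\varphi_i(p_0)\in B_{2n}(0,1)$ (possible by a Lebesgue number argument), and compose $\varphi_i$ with the translation sending $\varphi_i(p_0)$ to $0$. Since $K$ is generic with $\dim_{\R}K=n$, the tangent space $T_{p_0}K$ is totally real: if $T_{p_0}K$ met $J(T_{p_0}K)$ nontrivially, then its complex span would have complex dimension at most $n-1$ and hence lie in a complex hyperplane, contradicting genericity. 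Consequently there is a unitary matrix $U(p_0)\in\U(n)$ mapping the image of $T_{p_0}K$ in $\C^n$ onto $\R^n$; define $\Psi$ on a small neighborhood $W_{p_0}$ to be the postcomposition of the chart above with $U(p_0)$ and with a dilation chosen so that the image is exactly $B_{2n}(0,r_K)$ for a common $r_K$.

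By construction $\Psi$ is biholomorphic onto $B_{2n}(0,r_K)$, $\Psi(p_0)=0$, and the tangent space of $\Psi(K\cap W_{p_0})$ at $0$ equals $\R^n\subset\C^n$. Writing $\mathbf{z}=\mathbf{x}+i\mathbf{y}$, the projection $(\mathbf{x},\mathbf{y})\mapsto\mathbf{x}$ restricted to $\Psi(K\cap W_{p_0})$ has invertible differential at $0$, so the implicit function theorem produces a $\mathcal{C}^3$ map $h$ with $h(0)=0$ and $Dh(0)=0$ such that $\Psi(K\cap W_{p_0})$ contains the graph $\{(\mathbf{x},h(\mathbf{x}))\}$ over a neighborhood of $0$ in $\R^n$; one additional dilation (which only multiplies the universal constants by a bounded factor) lets us assume this graph is defined on $\overline{B}_n(0,1)$. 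The uniform bounds $\|\Psi\|_{1},\|\Psi^{-1}\|_{1},\|h\|_{3}\le c_1$ then follow from the uniform control on the atlas, from the fact that $U(p_0)$ takes values in the compact group $\U(n)$, and from the quantitative form of the implicit function theorem applied to a family of $\mathcal{C}^3$-submanifolds with uniformly bounded geometry.

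The one genuine subtlety is arranging that the assignment $p_0\mapsto U(p_0)$ is continuous, with uniformly bounded first derivatives, so that the composed map $\Psi$ really does inherit the required $\mathcal{C}^1$-control and is not merely bounded pointwise. Locally on each chart this is produced by applying Gram--Schmidt to a $\mathcal{C}^2$ orthonormal frame of $TK|_{K\cap U_i}$, which exists because $K$ is $\mathcal{C}^3$; a partition of unity subordinate to the finite atlas then patches these local choices into a globally defined rotation with a uniform bound on its first derivatives. Once this point has been handled, every other estimate in the lemma is a routine compactness argument, and the construction serves as the model for the higher-dimensional and piecewise-smooth analogues taken up later.
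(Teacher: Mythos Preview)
Your overall architecture---fixed finite atlas, translation to the origin, a complex-linear change of coordinates straightening $T_{p_0}K$ to $\R^n$, then the implicit function theorem---matches the paper's proof exactly. There is, however, a genuine error in the third step: you assert that a \emph{unitary} matrix $U(p_0)\in\U(n)$ can send an arbitrary totally real $n$-plane to $\R^n$. This is false for $n\ge 2$. Unitary maps preserve the imaginary part of the Hermitian form, i.e.\ the standard symplectic form $\omega$, and $\R^n$ is Lagrangian for $\omega$; hence the $\U(n)$-orbit of $\R^n$ is precisely the Lagrangian Grassmannian $\U(n)/\mathrm{O}(n)$, not the full space of totally real $n$-planes. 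For a concrete counterexample in $\C^2$, the real span of $(1,0)$ and $(i,1)$ is totally real but has $\omega\big((1,0),(i,1)\big)=-1\ne 0$, so no unitary carries it to $\R^2$.

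The paper fixes this by using a general complex-linear $\Psi'\in\GL(n,\C)$ rather than a unitary one; the uniform bounds $\|\Psi'\|_1,\|\Psi'^{-1}\|_1\le c_1$ then come from compactness of $K$ together with the fact that genericity ($T_{p_0}K+iT_{p_0}K=\C^n$) is an open, nondegenerate condition varying continuously with $p_0$. Once you replace ``unitary'' by ``invertible complex-linear with uniformly bounded norm and inverse,'' your argument goes through. Incidentally, your final paragraph about arranging $p_0\mapsto U(p_0)$ to be $\mathcal{C}^1$ is unnecessary: the lemma asks only for uniform bounds at each fixed $p_0$, not for any regularity of the chart in $p_0$, and the paper makes no such construction.
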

\begin{proof} We cover $X$ by a finite family of charts $(W_j, \Psi_j)$, where $W_j$ is an open subset of $X$ and $\Psi_j$ is a biholomorphic map from $W_j$ to the ball $B_{2n}(0,2).$ We choose these charts so that $\Psi_j^{-1}\big(B_{2n}(0,1)\big)$ also cover $X.$ This choice is independent of $p_0.$ Consider a chart $(W_{j_0},\Psi_{j_0})$ such that $p_0$ belongs to $\Psi_{j_0}^{-1}\big(B_{2n}(0,1)\big).$ Define $W_{p_0}:=W_{j_0}$ and $\Psi:= \Psi_{j_0}-\Psi_{j_0}(p_0).$   Let $\mathbf{z}=(z_1,\cdots,z_n)$ be the coordinates on $\C^n.$ Identify $K\cap W_{p_0}$ with $\Psi(K\cap W_{p_0})$ for convenience.  By the hypothesis on $K,$ we have 
$$T^{\R}_{\mathbf{z}}K+ i T^{\R}_{\mathbf{z}}K= \C^n.$$
This implies that there are a positive constant $c_1$ independent of $p_0$ and  a linear change of coordinates $\Psi'=(\Psi'_1,\cdots, \Psi'_n)$ of $\C^n$ such that 
\begin{align} \label{ine_linearchange}
\|\Psi'\|_1 \le c_1, \quad \|\Psi'^{-1}\|_1 \le c_1
\end{align}
and
$$\Psi'(p_0)=0 \quad  \text{ and } \quad \Psi' (T^{\R}_{p_0}K)=\{\Im \Psi'_k=0, 1 \le k \le n\},$$
where $T^{\R}_{\mathbf{z}}K$ is considered naturally  as an affine subspace of $\C^n.$ Replacing $\Psi$ by $\Psi' \circ \Psi$,  we can suppose that $\Psi(p_0)=0$ and 
$$\Psi_* (T^{\R}_{p_0}K)=\{\Im z_k=0, 1 \le k \le n\}=\{ \mathbf{z}= \mathbf{x}+ i 0\} \equiv \R^n.$$   
By rescaling $\Psi$ (by a constant independently of $p_0$)  if necessary, the submanifold 
$$K\cap W_{p_0} \cap \{\mathbf{z} \in \C^n: \Re \mathbf{z} \in \overline{B}_n(0,1)\}$$
is the graph of a $\mathcal{C}^{3}$ map $h=(h_1,\cdots, h_n)$ over $\overline{B}_n(0,1)$ of  $\R^n.$ By construction, we have $h(0)=Dh(0)=0.$ The compactness of $K$ and (\ref{ine_linearchange}) insure that there is a positive constant $c_1$ independent of $p_0$ such that $\|h\|_{3} \le c_1.$ The proof is finished. 
\end{proof}
From now on, we only use the local coordinates introduced in Lemma \ref{le_localcoordinates} and identify points in $W_{p_0}$ with those in $B_{2n}(0,r_K)$ via $\Psi.$   Property $(i)$ of Lemma \ref{le_localcoordinates} implies that the distance on $X$ is uniformly comparable with the Euclidean distance measured by the local coordinates given in Lemma \ref{le_localcoordinates}. Hence, in what follows, we make no distinction between these two distances. The estimate (\ref{ine_C1alphanorm}) implies that 
\begin{align}\label{ine_chuanC2}
 |h(\mathbf{x})| \le c_1 |\mathbf{x}|^{2}, \quad  |Dh(\mathbf{x})| \le c_1 |\mathbf{x}| \quad  \text{for } |\mathbf{x}| \le 1.
\end{align}

For each $\mathbf{z} \in B^*_{2n}(0,1)$ and $t \in (0,1],$ let $u_{\mathbf{z},t}$ be the map defined in  (\ref{defcuau_version8}).  Let $F$ and $c_0$ be the family of analytic discs and the constant respectively in Proposition \ref{pro_specialanalyticdisc}.  In order to construct an analytic disc half-attached to $K$, it suffices to find a  map 
$$U: \partial \D \rightarrow B_n(0,1) \subset \R^n,$$
which is H\"older continuous, satisfying the following Bishop-type equation 
\begin{align}\label{Bishoptype}
U_{\mathbf{z},t}(\xi)=t (\Re\mathbf{z}- \Im \mathbf{z}) - \mathcal{T}_1\big(h(U_{\mathbf{z},t}) \big)(\xi) - \mathcal{T}_1 u_{\mathbf{z},t}(\xi),
\end{align}      
where $\mathbf{z}$ and $t$ are parameters in $B^*_{2n}(0,1)$ and $(0,1)$ respectively. Indeed, suppose that (\ref{Bishoptype}) has a solution. For simplicity,  we use the same notation $U_{\mathbf{z},t}(z)$ to denote the harmonic extension of $U_{\mathbf{z},t}(\xi)$ to $\D.$ Let $P_{\mathbf{z},t}(z)$ be  the harmonic extension of $h\big(U_{\mathbf{z},t}(\xi)\big)$ to $\D.$ 
Define
$$F^h(z, \mathbf{z},t) := U_{\mathbf{z},t}(z)+ i  P_{\mathbf{z},t}(z)+ i  u_{\mathbf{z},t}(z)$$
which is a family of analytic discs parametrized by $(\mathbf{z},t).$ For any $\xi \in \partial^+ \D,$ the defining formula of $F^h$ and the fact that $u_{\mathbf{z},t} \equiv 0$ on $\partial^+ \D$ imply that  
$$F^h(\xi, \mathbf{z},t)=U_{\mathbf{z},t}(\xi)+ i  P_{\mathbf{z},t}(\xi)=U_{\mathbf{z},t}(\xi)+ i h\big(U_{\mathbf{z},t}(\xi)\big) \in K$$
by Property $(ii)$ of Lemma \ref{le_localcoordinates}. In other words, $F^h$ is half-attached to $K.$ Moreover we have  
$$F^h(1, \mathbf{z},t) =t (\Re\mathbf{z}- \Im \mathbf{z})+ i h(t\Re\mathbf{z}- t\Im \mathbf{z}).$$
In what follows, it is convenient to regard  $U_{\mathbf{z},t}(z)$ as a function of three variables $(z,\mathbf{z},t).$ 

\begin{proposition}\label{pro_Bishopequation}
There is a small positive number $t_1 \in (0,1)$ independent of $(\mathbf{z},p_0)$ so that  for any $t \in (0, t_1]$ and any $\mathbf{z} \in B^*_{2n}(0,1),$ the equation (\ref{Bishoptype}) has a unique solution $U_{\mathbf{z},t}$ such that $U_{\mathbf{z},t}(\xi)$ is  $\mathcal{C}^{2, \frac{1}{2}}$ in $\xi,$ the partial derivative  $D_{\mathbf{z}}U_{\mathbf{z},t} $ exists and  is $\mathcal{C}^{1,\frac{1}{2}}$ in $\xi \in \partial \D.$ Moreover, the two following  estimates hold:
\begin{align}\label{ine_danhgiachuancuaU}
\|U_{\mathbf{z},t}(\cdot)\|_{2, \frac{1}{2}} \le 4 c_0 t, \quad \|D_{\mathbf{z}} U_{\mathbf{z},t}(\cdot)\|_{1, \frac{1}{2}} \le 4 c_0 t |\mathbf{z}|^{-1}.
\end{align} 
\end{proposition}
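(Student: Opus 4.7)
The plan is the standard Bishop-type fixed-point strategy. I will recast (\ref{Bishoptype}) as a fixed-point problem for the map
$$\Phi_{\mathbf{z},t}(U) := t(\Re\mathbf{z}-\Im\mathbf{z}) - \mathcal{T}_1(h\circ U) - \mathcal{T}_1 u_{\mathbf{z},t}$$
on a ball of $\mathcal{C}^{2, 1/2}(\partial\D, \R^n)$ of radius $O(t)$, and apply Banach's fixed-point theorem. The crucial algebraic input is that $h$ vanishes to second order at the origin (Lemma \ref{le_localcoordinates}), so the nonlinear term $h \circ U$ is quadratically small when $U$ is small, which is what allows the argument to close for $t$ small.

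First I would establish the composition estimates
$$\|h\circ U\|_{2, 1/2} \lesssim \|U\|_{2,1/2}^2, \qquad \|h\circ U - h\circ V\|_{2,1/2} \lesssim \bigl(\|U\|_{2,1/2} + \|V\|_{2,1/2}\bigr)\|U-V\|_{2,1/2},$$
valid for $U, V$ of small $\mathcal{C}^{2, 1/2}$-norm. These follow from Taylor's formula using $h(0)=Dh(0)=0$ and $\|h\|_3 \leq c_1$, combined with the Banach-algebra-type structure of $\mathcal{C}^{2, 1/2}(\partial\D)$. Together with the boundedness of $\mathcal{T}_1$ on $\mathcal{C}^{2, 1/2}(\partial\D)$ and the bound $\|u_{\mathbf{z},t}\|_{2, 1/2} \lesssim t$ (obtained from Proposition \ref{pro_specialanalyticdisc}(iii) via the embedding $\mathcal{C}^3 \hookrightarrow \mathcal{C}^{2, 1/2}$), these estimates let me show that for a sufficiently small $t_1$, the map $\Phi_{\mathbf{z},t}$ sends the ball $\mathcal{B}_t := \{\|U\|_{2,1/2} \leq 4c_0 t\}$ into itself and is a $\tfrac{1}{2}$-contraction. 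Banach's theorem then produces the unique solution $U_{\mathbf{z},t}$ with $\|U_{\mathbf{z},t}\|_{2, 1/2} \leq 4c_0 t$, which is the first half of (\ref{ine_danhgiachuancuaU}).

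For the $\mathbf{z}$-derivative I will formally differentiate (\ref{Bishoptype}) to obtain the linear equation
$$V = tM - \mathcal{T}_1\bigl(Dh(U_{\mathbf{z},t})\cdot V\bigr) - \mathcal{T}_1 D_{\mathbf{z}} u_{\mathbf{z},t},$$
where $M = D_{\mathbf{z}}(\Re\mathbf{z}-\Im\mathbf{z})$ is constant. Since $|Dh(U_{\mathbf{z},t})| \leq c_1 |U_{\mathbf{z},t}| \lesssim t$, the operator $V \mapsto \mathcal{T}_1(Dh(U_{\mathbf{z},t})\cdot V)$ has $\mathcal{C}^{1, 1/2}$-operator norm $O(t)$, so for $t_1$ small the linear equation has a unique solution $V \in \mathcal{C}^{1, 1/2}(\partial\D, \R^n)$; the a priori bound $\|D_{\mathbf{z}} u_{\mathbf{z},t}\|_{1, 1/2} \lesssim t|\mathbf{z}|^{-1}$ from Proposition \ref{pro_specialanalyticdisc}(iii) then delivers $\|V\|_{1, 1/2} \leq 4c_0 t|\mathbf{z}|^{-1}$. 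To identify this $V$ with the genuine derivative $D_{\mathbf{z}} U_{\mathbf{z},t}$, I will subtract the fixed-point equations at $\mathbf{z}$ and $\mathbf{z}+\varepsilon$, apply a second-order Taylor expansion to $h$ along the path from $U_{\mathbf{z},t}$ to $U_{\mathbf{z}+\varepsilon,t}$, and show the normalized difference quotient converges to $V$ in $\mathcal{C}^{1, 1/2}$ as $\varepsilon \to 0$, using the contraction property to absorb the error.

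The hardest step is the careful derivation of the quadratic composition estimates in the $\mathcal{C}^{2, 1/2}$ norm: this requires applying the chain rule through the second derivative and controlling mixed products such as $D^2h(U)\cdot DU\cdot DU$ in the Hölder seminorm, which is bookkeeping-intensive. A secondary but important concern is that all constants must be uniform in $p_0 \in K$; this is guaranteed since $\|h\|_3 \leq c_1$ uniformly by Lemma \ref{le_localcoordinates} and the operator norm of $\mathcal{T}_1$ is intrinsic to $\partial \D$.
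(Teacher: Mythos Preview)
Your proposal is correct and follows essentially the same fixed-point strategy as the paper: both set up the map $U\mapsto t(\Re\mathbf{z}-\Im\mathbf{z})-\mathcal{T}_1(h\circ U)-\mathcal{T}_1 u_{\mathbf{z},t}$, exploit $h(0)=Dh(0)=0$ to get quadratic smallness of the nonlinear term, and apply Banach's theorem on a ball of radius $4c_0t$. The only noteworthy difference is in the identification of $D_{\mathbf{z}}U_{\mathbf{z},t}$: the paper differentiates the Picard iterates $U^k_{\mathbf{z},t}$ (which are manifestly smooth in $\mathbf{z}$), shows the resulting $V^k_{\mathbf{z},t}$ converge to the solution $V_{\mathbf{z},t}$ of the linearized equation, and then integrates, whereas you propose a direct difference-quotient argument; both are standard and your inclusion of the constant term $tM$ from differentiating $t(\Re\mathbf{z}-\Im\mathbf{z})$ is harmless since it is $O(t)\le O(t|\mathbf{z}|^{-1})$.
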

Proposition \ref{pro_Bishopequation} except (\ref{ine_danhgiachuancuaU}) is a direct corollary of a more general result due to Tumanov, see \cite[Th. 4.19]{MerkerPorten2}.  Since we do not need the optimal regularity for $U_{\mathbf{z},t}$ (whereas it is the case for Tumanov's result), the proof is simpler. We will follow the presentation in \cite{MerkerPorten2}.  Firstly, we need the following preparatory lemma on the  norms of the H\"older spaces.

\begin{lemma} \label{le_danhgiag1g2chuanC11/2} Let  $g_1$ and $g_2$ be functions defined on $\partial \D$ with suitable differentiability. Then we have
\begin{align}\label{ine_danhgiatichC1/222}
\|g_1 g_2\|_{\frac{1}{2}} \le  \|g_1\|_{\frac{1}{2}} \|g_2\|_{\frac{1}{2}}
\end{align}
and
\begin{align}\label{ine_danhgiatichC1/2}
\|g_1 g_2\|_{1, \frac{1}{2}} \le  4 \|g_1\|_{1, \frac{1}{2}} \|g_2\|_{1, \frac{1}{2}}.
\end{align}
Moreover, there exists a positive constant $c(n)$ such that for any maps  $g_1,g_2$ from $\partial \D$ to $B_n(0,1)$ and any function $f$ on $B_n(0,1),$  we have
\begin{align}\label{ine_danhgiatichC1/233}
  \| f \circ g_1- f \circ g_2 \|_{1, \frac{1}{2}} \le  c(n)\{1+ \|g_2\|_{1, \frac{1}{2}}\} \|f\|_2\|g_1- g_2\|_{1, \frac{1}{2}}
\end{align}
and
\begin{align}\label{ine_danhgiatichC1/233'version2}
\| f \circ g_1\|_{1, \frac{1}{2}} \le c(n)\big\{ \|Df \circ g_1\|_0 \|g_1\|_{1, \frac{1}{2}} +\|f\|_2 \|g_1\|^2_{1, \frac{1}{2}}\big\}.
\end{align}
and
\begin{align}\label{ine_danhgiatichC1/233'}
\| f \circ g_1\|_{1, \frac{1}{2}} \le c(n)\{1+    \|g_1\|_{1, \frac{1}{2}}\} \|f\|_2\|g_1\|_{1, \frac{1}{2}}.
\end{align}
\end{lemma}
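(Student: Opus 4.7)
The five inequalities are all standard Hölder calculus on $\partial\D$, and I would prove them in the order stated, each building on the earlier ones. None of them requires any deep idea; the work is in choosing the right algebraic decompositions and in tracking the constants carefully so as to match the precise forms asserted.

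For \eqref{ine_danhgiatichC1/222}, the classical identity
$$(g_1 g_2)(\xi) - (g_1 g_2)(\xi') = g_1(\xi)\bigl(g_2(\xi) - g_2(\xi')\bigr) + g_2(\xi')\bigl(g_1(\xi) - g_1(\xi')\bigr)$$
yields $[g_1 g_2]_{1/2} \le \|g_1\|_0 [g_2]_{1/2} + \|g_2\|_0 [g_1]_{1/2}$, which combined with $\|g_1 g_2\|_0 \le \|g_1\|_0 \|g_2\|_0$ and the expansion $\|g_j\|_{1/2} = \|g_j\|_0 + [g_j]_{1/2}$ gives $\|g_1 g_2\|_{1/2} \le \|g_1\|_{1/2}\|g_2\|_{1/2}$. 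For \eqref{ine_danhgiatichC1/2}, Leibniz gives $D(g_1 g_2) = g_2 Dg_1 + g_1 Dg_2$; applying \eqref{ine_danhgiatichC1/222} to each summand in the $C^{1/2}$-norm and summing the $C^0$-, $C^1$- and $[D\,\cdot\,]_{1/2}$-contributions, together with the elementary inequality $\|g\|_{1/2} \le C\|g\|_{1,1/2}$ arising from $[g]_{1/2} \le (\mathrm{diam}\,\partial\D)^{1/2}\|Dg\|_0$, produces the constant $4$.

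For \eqref{ine_danhgiatichC1/233}, I would use the fundamental-theorem-of-calculus representation
$$f\circ g_1 - f\circ g_2 = H \cdot (g_1 - g_2), \qquad H(\xi) := \int_0^1 Df\bigl(g_2 + s(g_1 - g_2)\bigr)(\xi)\,ds,$$
and then invoke \eqref{ine_danhgiatichC1/2} to reduce the problem to bounding $\|H\|_{1,1/2}$. Differentiating under the integral and using the chain rule, $\|H\|_{1,1/2}$ is controlled by $\|f\|_2$ times a combination of $\|g_1\|_{1,1/2}$ and $\|g_2\|_{1,1/2}$; here one exploits the fact that $g_1, g_2$ take values in $B_n(0,1)$ (so $\|g_j\|_0 \le 1$) and splits $Dg_s = Dg_2 + sD(g_1 - g_2)$ so that the ``extra'' factors of $\|g_1 - g_2\|_{1,1/2}$ can be absorbed into the $\|g_1 - g_2\|_{1,1/2}$ already present on the right, recovering the advertised form $c(n)(1+\|g_2\|_{1,1/2})\|f\|_2 \|g_1-g_2\|_{1,1/2}$.

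For \eqref{ine_danhgiatichC1/233'version2}, the chain rule $D(f\circ g_1) = (Df\circ g_1)\cdot Dg_1$ combined with \eqref{ine_danhgiatichC1/222} applied in the $C^{1/2}$-norm gives the bound, once one notes the pointwise-composition estimate $[Df \circ g_1]_{1/2} \lesssim \|D^2 f\|_0\, [g_1]_{1/2}\cdot(\mathrm{diam}\,\partial\D)^{1/2} \lesssim \|f\|_2 \|g_1\|_{1,1/2}$ while leaving $\|Df \circ g_1\|_0$ untouched; the two terms on the right then appear naturally. Finally, \eqref{ine_danhgiatichC1/233'} is an immediate consequence of \eqref{ine_danhgiatichC1/233'version2} via the trivial bound $\|Df\circ g_1\|_0 \le \|f\|_2$. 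The one place I expect to be delicate is the bookkeeping in \eqref{ine_danhgiatichC1/233}: one must arrange the estimates so that the quadratic-looking terms arising from $g_s = sg_1 + (1-s)g_2$ collapse into the linear form required, and this is where the restriction $g_j \in B_n(0,1)$ is essential.
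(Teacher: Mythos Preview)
Your approach is essentially the same as the paper's for \eqref{ine_danhgiatichC1/222}, \eqref{ine_danhgiatichC1/2}, \eqref{ine_danhgiatichC1/233'version2} and \eqref{ine_danhgiatichC1/233'}: the same add--subtract identity, Leibniz rule, and chain-rule bookkeeping appear in both.

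For \eqref{ine_danhgiatichC1/233} there is a genuine, if minor, difference. The paper does \emph{not} pass through the integral mean-value representation $f\circ g_1-f\circ g_2=H\cdot(g_1-g_2)$. Instead it computes directly
\[
D\bigl(f(g_1)-f(g_2)\bigr)=Df(g_1)\,(Dg_1-Dg_2)+\bigl(Df(g_1)-Df(g_2)\bigr)\,Dg_2,
\]
and bounds each summand in $\mathcal C^{1/2}$ using \eqref{ine_danhgiatichC1/222}. This decomposition has the advantage that the factor $Dg_2$ is isolated from the outset, so the $\|g_2\|_{1,\frac12}$ on the right of \eqref{ine_danhgiatichC1/233} appears immediately, without having to ``absorb'' cross terms from the interpolated path $g_s$. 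Your integral approach is equally valid and standard, but the collapsing you flag as delicate is precisely what the paper's decomposition avoids; note also that the hypothesis $g_j(\partial\D)\subset B_n(0,1)$ controls only the $\mathcal C^0$ norms, not $\|g_j\|_{1,\frac12}$, so it does not by itself dispose of the quadratic-looking terms---you still need to write $g_1=g_2+(g_1-g_2)$ and regroup. Either route reaches the stated inequality (up to the value of $c(n)$).
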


\begin{proof} Write 
$$g_1(\xi) g_2(\xi)- g_1(\xi') g_2(\xi')= g_1(\xi)\big(g_2(\xi)- g_2(\xi')\big)+ \big( g_1(\xi)- g_2(\xi') \big) g_2(\xi'),$$
for any $\xi,\xi' \in \partial \D.$ Using the last equality and the definition of the $\mathcal{C}^{\frac{1}{2}},$ one easily gets (\ref{ine_danhgiatichC1/222}).  
 Since  $D(g_1 g_2)= (Dg_1) g_2 + g_1 Dg_2,$ using (\ref{ine_danhgiatichC1/222}) and the definition of $\mathcal{C}^{1,\frac{1}{2}}$ gives
\begin{align*}
\|g_1 g_2\|_{1, \frac{1}{2}}& \le \|g_1 g_2\|_{1}+ \| D(g_1 g_2)\|_{\frac{1}{2}}\\ 
&\le \|g_1\|_1 \|g_2\|_{1}+ \|D g_1\|_{\frac{1}{2}}\|g_2\|_1+ \|g_1\|_1  \|Dg_2\|_{\frac{1}{2}} \le 4 \|g_1\|_{1, \frac{1}{2}} \|g_2\|_{1, \frac{1}{2}}.
\end{align*}
Hence, (\ref{ine_danhgiatichC1/2}) follows.

Now we prove (\ref{ine_danhgiatichC1/233}).  Let $g_1,g_2,f$ be as in the hypothesis of  (\ref{ine_danhgiatichC1/233}).  We have
$$D\big(f(g_1)- f(g_2) \big)= Df(g_1) D g_1 - D f(g_2) D g_2= Df(g_1)(D g_1 - D g_2)+ \big(Df(g_1)- Df(g_2)\big) D g_2.$$
Applying (\ref{ine_danhgiatichC1/222}) to the last sum shows that there exists a positive constant $c'(n)$ depending only on $n$ so that 
\begin{align*} 
\| f(g_1)- f(g_2) \|_{1, \frac{1}{2}} &\le \| f(g_1)- f(g_2) \|_1 + \| D\big(f(g_1)- f(g_2) \big) \|_{\frac{1}{2}} \\
\nonumber
 &\le  c'(n) \bigg\{\|f\|_1 \|g_1 -g_2\|_1+ \|f\|_2\|g_1 -g_2\|_0 \|g_2\|_1+\\
 \nonumber
& \quad  + \| D f(g_1)\|_{\frac{1}{2}} \| Dg_1 -Dg_2\|_{\frac{1}{2}} +  \|f\|_2 \|g_1 - g_2\|_{\frac{1}{2}}  \|g_2\|_{1, \frac{1}{2}} \bigg\}\\
\nonumber
& \le  c'(n)\{1+    3 \|g_2\|_{1, \frac{1}{2}}\} \|f\|_2\|g_1- g_2\|_{1, \frac{1}{2}}. 
\end{align*}
Hence,  (\ref{ine_danhgiatichC1/233}) follows by choosing $c(n)=3c'(n).$ The inequality (\ref{ine_danhgiatichC1/233'version2})  is  deduced by using the same method and  (\ref{ine_danhgiatichC1/233'}) is a direct consequence of  (\ref{ine_danhgiatichC1/233'version2}). The proof is finished.
\end{proof}

\begin{proof}[Proof of Proposition \ref{pro_Bishopequation}] Let  $C_{1,1/2}$ and $C_{2,1/2}$ be the constants $C_{k,\beta}$ appearing in (\ref{ine_chuancuaT}) with $k=1,2$ and $\beta=1/2$. Let $c(n)$ be the constant in Lemma \ref{le_danhgiag1g2chuanC11/2}.  Define 
$$t_1:= \big(40 c_0 c_1 c(n) \max\{C_{1,1/2},C_{2,1/2}\}+4 c_0 \big)^{-2}.$$
Fix  $t \in (0,t_1)$ and $\mathbf{z} \in B^*_{2n}(0,1).$  Let $\mathcal{A}$ be the set of $\mathcal{C}^{1,\frac{1}{2}}$ maps 
$$U: \partial \D \rightarrow \overline{B}_n(0, 4 c_0 t)$$ 
such that $\|U\|_{1, \frac{1}{2}} \le 4 c_0 t.$  Endow $\mathcal{A}$ with the $\mathcal{C}^{1,\frac{1}{2}}$-norm making it become a closed subset of a suitable Banach space. Define 
\begin{align} \label{eq_definitionofG}
G(U):=t (\Re\mathbf{z}- \Im \mathbf{z}) - \mathcal{T}_1\big(h(U) \big) - \mathcal{T}_1 u_{\mathbf{z},t}.
\end{align} 
We will show that $G$ is a well-defined self-map of $\mathcal{A}$ and  is a contraction. 

Let $U \in \mathcal{A}.$ Note that since $\mathbf{z} \in B^*_{2n}(0,1),$ we have $|\Re \mathbf{z} - \Im \mathbf{z}| \le 2.$ By (\ref{ine_chuancuaT}) and Proposition \ref{pro_specialanalyticdisc}, we get
\begin{align} \label{ine_chuanC11/2cuahU}
\|G(U)\|_{1, \frac{1}{2}} \le 2 t+  \| \mathcal{T}_1\|_{1, \frac{1}{2}} \|h(U)\|_{1, \frac{1}{2}}+ \|F(\cdot,\mathbf{z},t)\|_2 \le C_{1,1/2} \|h(U)\|_{1, \frac{1}{2}}+ 3 c_0 t
\end{align}
because we chose  $c_0 >1.$
The inequality (\ref{ine_danhgiatichC1/233'version2}) for $f= h$ and $g_1=U$ combined with (\ref{ine_chuanC2}) yields that
\begin{align} \label{ine_chuanC1/2cuahU}
\|h(U)\|_{1, \frac{1}{2}} \le 2 c(n) c_1  \|U\|^2_{1, \frac{1}{2}}.  
\end{align}
We deduce from (\ref{ine_chuanC1/2cuahU}) and (\ref{ine_chuanC11/2cuahU}) that 
\begin{align} \label{ine_chuanC11/2G(U)}
\|G(U)\|_{1, \frac{1}{2}} \le 2 c_1 c(n) C_{1,1/2}\|U\|^2_{1, \frac{1}{2}}+  3c_0 t \le  4 c_0 t
\end{align}
and  similarly using (\ref{eq_definitionofG}) gives
\begin{align} \label{ine_chuanC11/2G(U)contraction}
\|G(U)-G(U')\|_{1, \frac{1}{2}}& \le 2 c_1 c(n) C_{1,1/2}\|U-U'\|^2_{1, \frac{1}{2}} \\
\nonumber
&\le 16 c_0 c_1 c(n) C_{1,1/2} t\|U-U'\|_{1, \frac{1}{2}} \le   t^{1/2}\|U-U'\|_{1, \frac{1}{2}},
\end{align}
for any $U,U'\in \mathcal{A}.$ The inequality (\ref{ine_chuanC11/2G(U)}) shows that $G$ is well-defined. And the contractivity of $G$ follows from  (\ref{ine_chuanC11/2G(U)contraction}).  By the fixed point theorem, $G$ has a unique fixed point $U_{\mathbf{z},t} \in \mathcal{A}.$ In other words, the equation (\ref{Bishoptype}) has a unique solution $U_{\mathbf{z},t} \in \mathcal{C}^{1,\frac{1}{2}}(\D)$ and  $\|U_{\mathbf{z},t}\|_{1, \frac{1}{2}} \le 4 c_0 t.$ 

 Now we explain why $U_{\mathbf{z},t} \in \mathcal{C}^{2,\frac{1}{2}}(\partial \D).$  Define $\mathcal{A}'$ to be the subset of $\mathcal{A}$ consisting of $U$ with $\|U\|_{2,\frac{1}{2}} \le 4 c_0 t.$ Since $\mathcal{A}'$ is a closed subset with respect to the $\mathcal{C}^{2,\frac{1}{2}}$-norm in a suitable Banach space and $h,F \in \mathcal{C}^3,$ similar arguments as above applied to the $\mathcal{C}^{2,\frac{1}{2}}$-norm show that for $t$ small enough, precisely $t\in (0,t_1),$  $G$ is a self-contraction of $\mathcal{A}'$ and $U_{\mathbf{z},t}$ is the unique fixed point of $G.$ Note that in this argument, we need to use the constant $C_{2,1/2}$ which explains its presence in the definition of $t_1.$  Therefore,  $U_{\mathbf{z},t} \in \mathcal{C}^{2,\frac{1}{2}}$ and it satisfies 
$$\|U_{\mathbf{z},t}\|_{2,\frac{1}{2}} \le 4 c_0 t.$$   

Now we investigate the dependence of $U_{\mathbf{z},t}$ on the parameter $\mathbf{z}.$ Observe that if $U_{\mathbf{z},t}$ is differentiable in $\mathbf{z}$ and $D_{\mathbf{z}}U_{\mathbf{z},t}$ is at least $\mathcal{C}^{\beta}$ in $\xi$ for some $\beta \in (0,1),$ then by (\ref{Bishoptype}) and $\mathcal{C}^{\beta}$-boundedness of $\mathcal{T}_1$ we must have
\begin{align}\label{equ_phuongtrinhdaohamzU}
D_{\mathbf{z}}U_{\mathbf{z},t}= - \mathcal{T}_1\big(Dh(U_{\mathbf{z},t}) D_{\mathbf{z}}U_{\mathbf{z},t} \big) - \mathcal{T}_1 D_{\mathbf{z}}u_{\mathbf{z},t}.
\end{align}
This leads us to study the equation
\begin{align}\label{equ_phuongtrinhchodaohamz}
V= - \mathcal{T}_1\big( H \cdot V \big) - \mathcal{T}_1 D_{\mathbf{z}}u_{\mathbf{z},t},
\end{align}
where $H(\xi)= Dh(U_{\mathbf{z},t}(\xi))$ is a $\mathcal{C}^2$ matrix function in $\xi\in \partial \D.$ This equation is of the same type as (\ref{Bishoptype}). Since
\begin{align}\label{ine_danhgiadaohamtheozcuau1309}
\|D_{\mathbf{z}}u_{\mathbf{z},t}(\cdot)\|_2 \le  t c_0 |\mathbf{z}|^{-1} \quad (\text{see (\ref{ine_danhgiadaohamchoF})}),
\end{align}
and $H \in \mathcal{C}^2,$  the same arguments as above show that the equation (\ref{equ_phuongtrinhchodaohamz}) has a unique solution $V_{\mathbf{z},t}$ in $\mathcal{C}^{1, \frac{1}{2}}(\partial \D)$ with  
\begin{align}\label{ine_chuanC11/2cuaV}
\|V_{\mathbf{z},t}\|_{1, \frac{1}{2}} \le 4 c_0 |\mathbf{z}|^{-1} t.
\end{align} 
 Furthermore, if we define $V'^0_{\mathbf{z},t}= D_{\mathbf{z}}u_{\mathbf{z},t}$ and 
$$V'^{k+1}_{\mathbf{z},t}= - \mathcal{T}_1\big( H \cdot V'^k_{\mathbf{z},t} \big) - \mathcal{T}_1 D_{\mathbf{z}}u_{\mathbf{z},t}$$
 for $k \in \N^*,$ then 
\begin{align}\label{ine_hoitucuaVktoiV}
\|V'^k_{\mathbf{z},t}- V_{\mathbf{z},t}\|_{1, \frac{1}{2}} \le t^{k/2}|\mathbf{z}|^{-1}
\end{align}
thanks to the $t^{1/2}$-contractivity of the self-map defining  the recurrence relation of $V'^k_{\mathbf{z},t}.$ 

 We now relate $V_{\mathbf{z},t}$ to $U_{\mathbf{z},t}.$ Note that by (\ref{ine_danhgiadaohamchoF}), $u_{\mathbf{z},t} \in \mathcal{A}$ for $t \in (0,t_1).$   Let $\{U^k_{\mathbf{z},t}\}_{k \in \N}$ be the sequence in $\mathcal{A}$ defined by 
$$U^0_{\mathbf{z},t}= u_{\mathbf{z},t}, \quad U^k_{\mathbf{z},t}= G(U^{k-1}_{\mathbf{z},t}) \quad  \text{for} \quad k \ge 1.$$
Since $u_{\mathbf{z},t}$ is $\mathcal{C}^4$ in $(z,\mathbf{z})$ and $h \in \mathcal{C}^3$ and $\mathcal{T}_1$ is a linear  $\mathcal{C}^{2,\frac{1}{2}}$-bounded operator,   the functions $U^k_{\mathbf{z},t}$ are $\mathcal{C}^{2, \frac{1}{2}}$ in $(z,\mathbf{z})$  for all $k\ge 0.$ Define 
$$V^k_{\mathbf{z},t}:=D_{\mathbf{z}}U^k_{\mathbf{z},t} \in \mathcal{C}^{1, \frac{1}{2}} \quad  \text{in} \quad  (z, \mathbf{z}),$$ for $k\in \N.$
 By definition of $U^k_{\mathbf{z},t},$ the sequence $V^k_{\mathbf{z},t}$ is defined by the induction relation
$$V^{k+1}_{\mathbf{z},t}= - \mathcal{T}_1\big(Dh(U^{k}_{\mathbf{z},t})V^{k}_{\mathbf{z},t} \big) - \mathcal{T}_1 D_{\mathbf{z}}u_{\mathbf{z},t},$$
for $k\ge 0.$ Using (\ref{ine_danhgiadaohamtheozcuau1309}), the induction on $k$ and the above technique in the proof of (\ref{ine_chuanC11/2cuaV}),  we obtain that   
\begin{align} \label{ine_laidanhgiachuaC11/2cuaVk}
\|V^k_{\mathbf{z},t}\|_{1,\frac{1}{2}} \le  4 c_0 t|\mathbf{z}|^{-1}.
\end{align}
Since $G$ is $t^{1/2}$-contraction, we have 
\begin{align} \label{ine_danhgiahieusoUkvaU}
\|U^k_{\mathbf{z},t}- U_{\mathbf{z},t}\|_{1, \frac{1}{2}} \le  t^{k/2}.
\end{align}
We now compare $V'^k_{\mathbf{z},t}$ and $V^k_{\mathbf{z},t}.$ Their difference is 
\begin{align} \label{eq_hieugiuaVkvaVk'}
V^{k+1}_{\mathbf{z},t} -V'^{k+1}_{\mathbf{z},t}= - \mathcal{T}_1 \big[\big(Dh(U^{k}_{\mathbf{z},t})- Dh(U_{\mathbf{z},t})\big) V^k_{\mathbf{z},t} \big]  - \mathcal{T}_1 \big[ Dh(U_{\mathbf{z},t})(V^k_{\mathbf{z},t}- V'^k_{\mathbf{z},t})\big].
\end{align}
Applying (\ref{ine_danhgiatichC1/2}) and (\ref{ine_chuancuaT})  to each term in the right-hand side of (\ref{eq_hieugiuaVkvaVk'}) gives 
\begin{align} \label{ine_sosanhVkV'kmaichuaxong}
\|V^{k+1}_{\mathbf{z},t} -V'^{k+1}_{\mathbf{z},t}\|_{1,\frac{1}{2}} &\le 4C_{1, 1/2}\|Dh(U^k_{\mathbf{z},t})- Dh(U_{\mathbf{z},t})\|_{1, \frac{1}{2}} \|V^k_{\mathbf{z},t}\|_{1, \frac{1}{2}}+\\
\nonumber
&+ 4 C_{1,1/2}\|Dh(U_{\mathbf{z},t})\|_{1, \frac{1}{2}} \|V^k_{\mathbf{z},t}- V'^k_{\mathbf{z},t}\|_{1, \frac{1}{2}}.
\end{align}
The second term of the right-hand side of (\ref{ine_sosanhVkV'kmaichuaxong}) is less than or equal to 
$$8 c_1 c(n)C_{1,1/2}\|U_{\mathbf{z},t}\|_{1, \frac{1}{2}} \|V^k_{\mathbf{z},t}- V'^k_{\mathbf{z},t}\|_{1, \frac{1}{2}}$$
thanks to (\ref{ine_danhgiatichC1/233'}) and (\ref{ine_C1alphanorm}). By the first inequality of (\ref{ine_danhgiachuancuaU}) and (\ref{ine_laidanhgiachuaC11/2cuaVk}), the last quantity is less than or equal to 
$$32 c_0 c_1 c(n)C_{1,1/2} t |\mathbf{z}|^{-1}\|V^k_{\mathbf{z},t}- V'^k_{\mathbf{z},t}\|_{1, \frac{1}{2}}.$$
In a similar way, the first term of the right-hand side of (\ref{ine_sosanhVkV'kmaichuaxong}) is less than or equal to 
$$8 c_1C_{1,1/2}\|U^k_{\mathbf{z},t}- U_{\mathbf{z},t}\|_{1, \frac{1}{2}} \|V^k_{\mathbf{z},t}\|_{\frac{1}{2}}$$
thanks to (\ref{ine_danhgiatichC1/233}) and (\ref{ine_C1alphanorm}). By (\ref{ine_laidanhgiachuaC11/2cuaVk}) and (\ref{ine_danhgiahieusoUkvaU}), the last quantity is also less than or equal to 
$$32 c_0 c_1 c(n)C_{1,1/2} t^{(k+2)/2}|\mathbf{z}|^{-1}.$$
Hence, we just proved that 
$$\|V^{k+1}_{\mathbf{z},t} -V'^{k+1}_{\mathbf{z},t}\|_{1,\frac{1}{2}} \le  32 c_0 c_1 c(n)C_{1,1/2}\bigg[ t^{(k+2)/2}|\mathbf{z}|^{-1}+ t    \|V^k_{\mathbf{z},t}- V'^k_{\mathbf{z},t}\|_{1, \frac{1}{2}} \bigg]. $$
By induction on $k$ and   the last inequality,  one easily deduces that  
$$\|V^{k+1}_{\mathbf{z},t} -V'^{k+1}_{\mathbf{z},t}\|_{1, \frac{1}{2}} \le t^{k/2} |\mathbf{z}|^{-1}$$
 for all $k \in \N.$ Combining with the fact that $V'^k_{\mathbf{z},t} \rightarrow V_{\mathbf{z},t},$ we get $V^k_{\mathbf{z},t} \rightarrow V_{\mathbf{z},t}.$ Integrating the last limit with respect to $\mathbf{z}$, one sees that $U_{\mathbf{z},t}$ is differentiable on $\mathbf{z}$ and $D_{\mathbf{z}}U_{\mathbf{z},t}= V_{\mathbf{z},t}.$ In particular, $D_{\mathbf{z}}U_{\mathbf{z},t}$ belongs to $\mathcal{C}^{1,\frac{1}{2}}(\partial \D).$ The proof is finished.
\end{proof}

 Let $t$ be  a  real number  in $ (0, t_1)$ as in Proposition \ref{pro_Bishopequation}. Define the map 
$$\Phi^h: B_{2n}(0,1) \rightarrow \C^n$$
 by putting $\Phi^h(0)=0$ and $\Phi^h(\mathbf{z})= F^h(1-|\mathbf{z}|+ i|\mathbf{z}|, \mathbf{z},t)$ for $\mathbf{z}\not = 0.$  Let $\Phi$ and $g$ be the maps defined in the proof of Proposition \ref{pro_specialanalyticdisc}.  Recall that $\Phi(\mathbf{z})=F(1-|\mathbf{z}|+ i|\mathbf{z}|, \mathbf{z},t)$ and $\Phi(\mathbf{z})= t \mathbf{z}+ g(\mathbf{z})$ and $$\|g\|_{1, B^*_{2n}(0,2r_0)} \le t/4, \quad  g(0)=0.$$
We want to prove that  $\Phi^h\big( B_{2n}(0,1)\big)$ contains an open neighborhood of $0$ just as what we did for $\Phi.$ To this end, we compare below these two maps and their derivatives.

\begin{lemma} \label{le_differentiabilityofPhi} There is a positive constant $c_2$ independent of $\mathbf{z}, p_0$ and of $t$ such that  for all $\mathbf{z} \in B^*_{2n}(0,1),$   we have 
\begin{align}\label{ine_estimateonphi}
\big|\Phi^h(\mathbf{z})- \Phi(\mathbf{z}) \big| \le c_2 t^{2}|\mathbf{z}|,
\end{align}
and 
\begin{align} \label{ine_estimateonphi2}
\big|D_{\mathbf{z}}\Phi^h(\mathbf{z})- D_{\mathbf{z}}\Phi(\mathbf{z})\big| \le c_2 t^2.
\end{align}
\end{lemma}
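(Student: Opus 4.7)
The plan is to start from the explicit identity for $F^h-F$. Substituting the definitions $F^h = U_{\mathbf{z},t} + iP_{\mathbf{z},t} + iu_{\mathbf{z},t}$ and $F = t(\Re\mathbf{z}-\Im\mathbf{z}) - \mathcal{T}_1 u_{\mathbf{z},t} + iu_{\mathbf{z},t}$ and inserting the Bishop equation (\ref{Bishoptype}) (harmonically extended to $\overline{\D}$), one obtains
\begin{equation*}
(F^h - F)(z,\mathbf{z},t) = -\mathcal{T}_1\bigl(h(U_{\mathbf{z},t})\bigr)(z) + iP_{\mathbf{z},t}(z).
\end{equation*}
Thus both (\ref{ine_estimateonphi}) and (\ref{ine_estimateonphi2}) reduce to controlling this map and its $\mathbf{z}$-derivative at the point $z_0 := 1-|\mathbf{z}|+i|\mathbf{z}|$, which lies at distance $\sqrt{2}\,|\mathbf{z}|$ from $1$.

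For (\ref{ine_estimateonphi}) I would apply (\ref{ine_danhgiatichC1/233'version2}) with $f=h$, $g_1=U_{\mathbf{z},t}$, using $|Dh(\mathbf{x})|\le c_1|\mathbf{x}|$, $\|h\|_2\le c_1$, and $\|U_{\mathbf{z},t}\|_{1,1/2}\le 4c_0 t$ from (\ref{ine_danhgiachuancuaU}), to get $\|h(U_{\mathbf{z},t})\|_{1,1/2,\partial\D}\lesssim t^2$. Then (\ref{ine_chuancuaT}) and (\ref{ine_danhgiachuaCkcuauvoibien}) yield $\|-\mathcal{T}_1(h(U_{\mathbf{z},t})) + iP_{\mathbf{z},t}\|_{1,1/2,\overline{\D}}\lesssim t^2$. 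At $z=1$ the real part vanishes (since $\mathcal{T}_1 v(1)=0$ for every $v$) while the imaginary part equals $h(U_{\mathbf{z},t}(1)) = h(t(\Re\mathbf{z}-\Im\mathbf{z})) = O(t^2|\mathbf{z}|^2)$ by $|h(\mathbf{x})|\le c_1|\mathbf{x}|^2$, using $U_{\mathbf{z},t}(1) = t(\Re\mathbf{z}-\Im\mathbf{z})$ (read off from (\ref{Bishoptype}) at $\xi=1$). Combining this vanishing with the $\mathcal{C}^1$-bound $\lesssim t^2$ and $|z_0-1|\le\sqrt{2}|\mathbf{z}|$ yields (\ref{ine_estimateonphi}).

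For (\ref{ine_estimateonphi2}) I would use the chain rule $D_\mathbf{z}\Phi^h(\mathbf{z}) = D_zF^h(z_0)\cdot D_\mathbf{z} z_0 + D_\mathbf{z} F^h(z_0,\mathbf{z},t)$, and similarly for $\Phi$. The difference of the first terms is $\lesssim \|F^h-F\|_{1,\overline{\D}}\lesssim t^2$, already obtained. Differentiating the displayed identity in $\mathbf{z}$ with $z$ frozen gives a formula with boundary data $Dh(U_{\mathbf{z},t})\,D_\mathbf{z} U_{\mathbf{z},t}$. By the product inequality (\ref{ine_danhgiatichC1/2}), the estimate $\|Dh(U_{\mathbf{z},t})\|_{1,1/2}\lesssim t$ (from (\ref{ine_danhgiatichC1/233'version2}) applied to $Dh$ in place of $h$), and $\|D_\mathbf{z} U_{\mathbf{z},t}\|_{1,1/2}\lesssim t|\mathbf{z}|^{-1}$ from (\ref{ine_danhgiachuancuaU}), this data has $\mathcal{C}^{1,1/2}$-norm $\lesssim t^2|\mathbf{z}|^{-1}$, and (\ref{ine_chuancuaT})--(\ref{ine_danhgiachuaCkcuauvoibien}) transfer the bound to $\overline{\D}$. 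Again at $z=1$ the real part vanishes identically ($\mathcal{T}_1 v(1)=0$) while the imaginary part is $D_\mathbf{z}\bigl[h(t(\Re\mathbf{z}-\Im\mathbf{z}))\bigr] = O(t^2|\mathbf{z}|)$ by $|Dh(\mathbf{x})|\le c_1|\mathbf{x}|$. A first-order Taylor expansion from $1$ to $z_0$ against the bound $\lesssim t^2|\mathbf{z}|^{-1}$ therefore gives $|D_\mathbf{z}(F^h-F)(z_0,\mathbf{z},t)|\lesssim t^2|\mathbf{z}|\cdot|\mathbf{z}|^{-1} = t^2$, which is (\ref{ine_estimateonphi2}).

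The main obstacle is the singular factor $|\mathbf{z}|^{-1}$ carried by $\|D_\mathbf{z} U_{\mathbf{z},t}\|_{1,1/2}$: taken at a generic point of $\overline{\D}$ the naive bound is $t^2|\mathbf{z}|^{-1}$, which is far too weak for (\ref{ine_estimateonphi2}). The required gain comes entirely from the cancellations at $z=1$, namely $\mathcal{T}_1 v(1)\equiv 0$ together with the vanishing of $h$ and $Dh$ at the origin, paired with $|z_0-1|\lesssim |\mathbf{z}|$. The delicate point is to keep careful track of these two ingredients simultaneously rather than relying on supremum norms over $\overline{\D}$.
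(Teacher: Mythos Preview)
Your proposal is correct and follows essentially the same route as the paper: define $E=F^h-F=-\mathcal{T}_1(P_{\mathbf{z},t})+iP_{\mathbf{z},t}$, bound $\|E\|_{1,1/2,\overline{\D}}\lesssim t^2$ and $\|D_{\mathbf{z}}E\|_{1,1/2,\overline{\D}}\lesssim t^2|\mathbf{z}|^{-1}$ via the composition/product estimates of Lemma~\ref{le_danhgiag1g2chuanC11/2}, observe the (near-)vanishing of $E$ and $D_{\mathbf{z}}E$ at $z=1$, and then use $|z_0-1|\lesssim|\mathbf{z}|$ to gain the missing factor. Your write-up is in fact slightly more explicit than the paper's in justifying $\|Dh(U_{\mathbf{z},t})\|_{1,1/2}\lesssim t$ (via (\ref{ine_danhgiatichC1/233'version2}) applied to $Dh$), and your closing remark correctly identifies the only subtle point, namely that the singular factor $|\mathbf{z}|^{-1}$ in the derivative bound is cancelled precisely by the value at $z=1$ combined with $|z_0-1|\lesssim|\mathbf{z}|$.
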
 

\begin{proof} Let $\mathbf{z} \in B^*_{2n}(0,1).$ Let $z^*:= 1-|\mathbf{z}|+ i |\mathbf{z}|.$   We deduce from (\ref{Bishoptype}) and the definition of $F^h$ that 
$$E(z,\mathbf{z},t):=F^h(z, \mathbf{z},t)- F(z, \mathbf{z},t)= - \mathcal{T}_1(P_{\mathbf{z},t})(z) + i P_{\mathbf{z},t}(z)$$
is a holomorphic map in $z$. Substituting $z$ by $z^*$ in the last equality gives 
\begin{align}\label{eq_nhanhlenchanquaroi}
\Phi^h(\mathbf{z})-\Phi(\mathbf{z}) = E(z^*, \mathbf{z},t).
\end{align}
Recall that  $P_{\mathbf{z},t}(\xi)=h\big(U_{\mathbf{z},t}(\xi)\big),$ for $\xi \in \D.$  By (\ref{ine_chuanC1/2cuahU}), (\ref{ine_danhgiachuancuaU}) and (\ref{ine_danhgiachuaCkcuauvoibien}), we have
\begin{align*} 
\|P_{\mathbf{z},t}(\cdot)\|_{1, \frac{1}{2},\overline{\D}} \lesssim  \|P_{\mathbf{z},t}(\cdot)\|_{1, \frac{1}{2},\partial\D} \lesssim   t^{2}.
\end{align*}
This yields that
\begin{align} \label{ine_danhgiadaohamcuaEtheoz}
\|E(\cdot, \mathbf{z},t)\|_{1,\frac{1}{2},\overline{\D}} \lesssim  \|\mathcal{T}_1(P_{\mathbf{z},t})\|_{1,\frac{1}{2},\overline{\D}}+  \|P_{\mathbf{z},t}\|_{1,\frac{1}{2},\overline{\D}} \lesssim \| P_{\mathbf{z},t} \|_{1,\frac{1}{2},\overline{\D}} \lesssim  t^2.
\end{align}
We have
$$E(1, \mathbf{z},t)=i h\big(U_{\mathbf{z},t}(1) \big)= i h( t \Re \mathbf{z}-t \Im \mathbf{z})$$
which is of modulus less than or equal to $t^2 |\Re \mathbf{z}-\Im \mathbf{z}|^2 \le 2t^2|\mathbf{z}|^2$ by (\ref{ine_chuanC2}). Using the last inequality and (\ref{ine_danhgiadaohamcuaEtheoz}), one has 
$$|E(z^*, \mathbf{z},t)| \le |E(z^*, \mathbf{z},t)- E(1, \mathbf{z},t)|+ | E(1, \mathbf{z},t)| \le  \| E(\cdot, \mathbf{z},t) \|_{1,\overline{\D}} |1-z^*| + 2 t^2|\mathbf{z}|^2  \lesssim t^2|\mathbf{z}|.$$
Using this and (\ref{eq_nhanhlenchanquaroi}), one gets (\ref{ine_estimateonphi}).

Differentiating  (\ref{eq_nhanhlenchanquaroi}) gives
\begin{align} \label{eq_tinhdaohamtheozcuaphi}
D_{\mathbf{z}}\Phi^h(\mathbf{z})- D_{\mathbf{z}}\Phi(\mathbf{z})=  D_{\mathbf{z}} E(z^*, \mathbf{z},t) + D_z E(z^* , \mathbf{z},t)\begin{bmatrix} -D_{\mathbf{z}}|\mathbf{z}|\\ \quad D_{\mathbf{z}}|\mathbf{z}| \end{bmatrix}.
\end{align}
By (\ref{ine_danhgiadaohamcuaEtheoz}), we have 
$$\big|D_z E(z^*, \mathbf{z},t)D_{\mathbf{z}}|\mathbf{z}| \big| \lesssim t^2 \big|D_{\mathbf{z}} |\mathbf{z}| \big|  \lesssim t^2.$$
 Hence it remains  to estimate the first term in the right-hand side of (\ref{eq_tinhdaohamtheozcuaphi}).  Observe that 
$$ D_{\mathbf{z}} E(z, \mathbf{z},t)= - \mathcal{T}_1\big(D_{\mathbf{z}}P_{\mathbf{z},t}\big)(z)+ i  D_{\mathbf{z}}P_{\mathbf{z},t}(z),$$
for all $z \in \overline{\D}.$ 
Let $H(\xi)=D h(U_{\mathbf{z},t}(\xi))$ the function defined in the proof of   Proposition \ref{pro_Bishopequation}. By definition of $P_{\mathbf{z},t},$ we have 
$$D_{\mathbf{z}} P_{\mathbf{z},t}(\xi)=H(\xi) D_{\mathbf{z}} U_{\mathbf{z},t}(\xi).$$
Using (\ref{ine_danhgiatichC1/2}) together with (\ref{ine_danhgiachuancuaU}) gives
\begin{align*}   
\| D_{\mathbf{z}}P_{\mathbf{z},t}\|_{1, \frac{1}{2}, \overline{\D}} \le \| D_{\mathbf{z}}P_{\mathbf{z},t}\|_{1, \frac{1}{2}, \partial\D} \le  4\|H\|_{1, \frac{1}{2},\partial \D}\|D_{\mathbf{z}}U_{\mathbf{z},t}\|_{1, \frac{1}{2}, \partial \D } \lesssim t^2 |\mathbf{z}|^{-1}. 
\end{align*}
We have $D_{\mathbf{z}} E(1, \mathbf{z},t)= i D_{\mathbf{z}}   h( t \Re \mathbf{z}-t \Im \mathbf{z})$ which is clearly of absolute value $ \lesssim t^2$ by (\ref{ine_chuanC2}). Combining this with (\ref{ine_chuancuaT}) yields that 
\begin{align*} 
|D_{\mathbf{z}} E(z^*, \mathbf{z},t)| &= |D_{\mathbf{z}} E(z^*, \mathbf{z},t)- D_{\mathbf{z}} E(1, \mathbf{z},t)|+ |D_{\mathbf{z}} E(1, \mathbf{z},t)|  \\
&\le |1- z^*| \| D_{\mathbf{z}} E(\cdot, \mathbf{z},t)\|_{1, \overline{\D}}+ t^2 \lesssim  |1- z^*| \| D_{\mathbf{z}} P_{\mathbf{z},t}\|_{1,\overline{\D}}+ t^2 \lesssim t^{2}. 
\end{align*}
The proof is finished.
\end{proof}

\begin{lemma} \label{le_implicittheorem} Let $r_0$ be the constant in Proposition \ref{pro_specialanalyticdisc}. There is a positive number $t_2<t_1$ independent of $p_0$ and of $\mathbf{z}$ such that for any $t \in (0,t_2],$ the set $\Phi^h \big(B_{2n}(0, r_0)\big)$ contains the ball $B_{2n}(0,  r_0 t/2).$
\end{lemma}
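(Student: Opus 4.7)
The plan is to show that $\Phi^h$ is a small perturbation of $\Phi$ and then apply the inverse‐function Lemma \ref{le_implicitfunction2} exactly as was done in the proof of Proposition \ref{pro_specialanalyticdisc} for $\Phi$ itself. Concretely, write
$$\Phi^h(\mathbf{z}) = t\mathbf{z} + \tilde g(\mathbf{z}), \qquad \tilde g(\mathbf{z}) := g(\mathbf{z}) + \bigl(\Phi^h(\mathbf{z})-\Phi(\mathbf{z})\bigr),$$
where $g(\mathbf{z}) = \Phi(\mathbf{z}) - t\mathbf{z}$ is the function from the proof of Proposition \ref{pro_specialanalyticdisc}, which satisfies $g(0)=0$ and is $t/4$-Lipschitz on $B_{2n}(0,2r_0)$.

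First I would check that $\tilde g(0)=0$: since $\Phi(0)=\Phi^h(0)=0$ by definition, this is automatic. Next, I would estimate the Lipschitz norm of $\tilde g$ on $\overline B_{2n}(0,2r_0)$: by the triangle inequality and Lemma \ref{le_differentiabilityofPhi},
$$\|\tilde g\|_{1,B^*_{2n}(0,2r_0)} \le \|g\|_{1,B^*_{2n}(0,2r_0)} + \sup_{\mathbf{z}} \bigl|D_{\mathbf{z}}\Phi^h(\mathbf{z}) - D_{\mathbf{z}}\Phi(\mathbf{z})\bigr| \le \frac{t}{4} + c_2 t^2.$$
Choosing $t_2 := \min\{t_1,\,1/(4c_2)\}$ makes the right-hand side at most $t/2$ for every $t\in(0,t_2]$, so $\tilde g$ is $(t/2)$-Lipschitz on $\overline B_{2n}(0,2r_0)$.

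Now I would apply Lemma \ref{le_implicitfunction2} (after the harmless rescaling $\mathbf{w}\mapsto 2r_0\mathbf{w}$ so that the domain becomes the unit ball) with $A = t\,\mathrm{Id}$, hence $|A^{-1}|_{\mathrm{ln}} = 1/t$, and $M = t/2$, so $|A^{-1}|_{\mathrm{ln}} M = 1/2 < 1$. Taking the intermediate radius to be $r_0$ (rescaled to $1/2$), the lemma yields that for every target point $\tilde{\mathbf{z}}$ in the ball of radius $\tfrac{1-1/2}{1/t}\,r_0 = r_0 t/2$ there exists $\mathbf{z}^* \in B_{2n}(0,r_0)$ with $\Phi^h(\mathbf{z}^*) = \tilde{\mathbf{z}}$. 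This is exactly the stated inclusion $\Phi^h(B_{2n}(0,r_0)) \supset B_{2n}(0,r_0 t/2)$.

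The only real work is the Lipschitz estimate on $\tilde g$, which is precisely where Lemma \ref{le_differentiabilityofPhi} enters; once that estimate is in hand the argument is formally identical to the one already used in Proposition \ref{pro_specialanalyticdisc}, and the constant $t_2$ is determined solely by $t_1$ and $c_2$, both of which are independent of $p_0$ and $\mathbf{z}$.
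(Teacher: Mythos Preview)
Your argument is correct and essentially identical to the paper's own proof: you define $\tilde g=\Phi^h-t\,\Id=(\Phi^h-\Phi)+g$, use Lemma~\ref{le_differentiabilityofPhi} together with the $t/4$-Lipschitz bound on $g$ from Proposition~\ref{pro_specialanalyticdisc} to make $\tilde g$ $t/2$-Lipschitz for $t\le t_2:=\min\{t_1,(4c_2)^{-1}\}$, and then invoke Lemma~\ref{le_implicitfunction2}. The paper proceeds in exactly the same way (writing $g^h$ for your $\tilde g$), so there is nothing to add.
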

\begin{proof}  Define $g^h(\mathbf{z}):= \Phi^h(\mathbf{z})- t \mathbf{z}.$ We have  $g^h= \Phi^h - \Phi +g.$ By Lemma \ref{le_differentiabilityofPhi}, $\Phi^h -\Phi$ is $t/4$-Lipschitz for $t\le t_2: = \min\{t_1, (4c_2)^{-1}\}.$ Combining with the fact that $g$ is $t/4$-Lipschitz on $\overline{B}_{2n}(0,2r_0)$ implies that $g^h$ is $t/2$-Lipschitz on $\overline{B}_{2n}(0,2r_0)$ for $t \in (0,t_2).$  Now, an application of Lemma \ref{le_implicitfunction2} to $\Phi^h= t\Id + g^h$ gives the desired result. The proof is finished.  
\end{proof}

\begin{proof}[Proof of Proposition \ref{pro_familydiscK} for the case without singularity.] In our chosen local coordinates \\ around $p_0,$ 
we have $p_0=0$ and $p=\mathbf{z}.$  Let $t \in (0,t_2)$ be as in Lemma \ref{le_implicittheorem}. For any $\mathbf{z} \in B^*_{2n}(0,  r_0 t/2),$ there is $\mathbf{z}^* \in B_{2n}(0,r_0)$ for which $\Phi^h(\mathbf{z}^*)=\mathbf{z}.$ We deduce from (\ref{ine_estimateonphi}) that 
$$|\mathbf{z}- \Phi(\mathbf{z}^*)| \le c_2 t^2 |\mathbf{z}^*| \le \frac{ t| \mathbf{z}^*|}{4} \cdot$$
At the end of the proof of  Proposition \ref{pro_specialanalyticdisc}, we proved that $|\Phi(\mathbf{z}^*)| \ge t |\mathbf{z}^*|/2.$ This implies that $|\mathbf{z}^*| \le 4 |\mathbf{z}|/ t.$ Let $f(z):= F^h(z,\mathbf{z}^*,  t_2)$ and $z^*=1- |\mathbf{z}|+ i |\mathbf{z}|.$ The last inequality implies that 
$$|1- z^*| \le 2|\mathbf{z}^*| \le 8|\mathbf{z}|/t.$$ 
The analytic disc $f$ clearly satisfies all requirements in Proposition \ref{pro_familydiscK}. 

Now, we explain how to obtain the desired analytic discs when $\dim_{\R} K >n$.   Since we only consider small discs near $K,$ it is enough to work in a small chart and identify $K$ with  a submanifold of $B_{2n}(0,1)$ with $0 \in K.$  Choose a real linear space $A$ through $0$ such that $A$ intersects $K \cap B_{2n}(0,2r)$ transversally at a generic manifold of dimension $n,$ where $r>0$ is a positive number.  We can choose $r$ small enough such that this property also holds for any linear subspace $A'$ parallel to $A$ which intersects  $K \cap B_{2n}(0,2r).$  Let $p_0 \in K \cap B_{2n}(0,r)$ and $p \in B_{2n}(0,1)$ close to $p_0.$ Let $K'$ be the intersection of $K \cap B_{2n}(0,2r)$ with the linear space $A'$ through $p_0$ and parallel to $A.$ The construction in the last subsections can be applied to $(K',X, p_0,p)$ without changes. We obtain analytic discs half-attached to $K',$ hence half-attached to $K,$ with the properties described in Proposition \ref{pro_familydiscK}. The proof  is finished. 
\end{proof}

\subsection{The case where $K$ has singularity} \label{subsec_withboundary}
We treat the case where $K$ is a compact generic nondegenerate $\mathcal{C}^5$-piecewise submanifold of $X$. Actually, $\mathcal{C}^4$-differentiability is enough for our proof but in order to avoid some involvedly technical points, we will use $\mathcal{C}^5$-differentiability. 

The case of higher dimension will be treated at the end of this subsection also by considering generic submanifolds of $K.$ The following is an analogue of Lemma \ref{le_localcoordinates}. 
 
\begin{lemma} \label{le_localcoordinates2} There exist  constants $c_1, r_K >1$ depending only on $(K,X)$ and a local chart $(W_{p_0}, \Psi)$ around $p_0,$ where $\Psi: W_ {p_0} \rightarrow B_{2n}(0,r_K)$ is biholomorphic with $\Psi(p_0)=0$  such that the two following conditions hold:  

$(i)$ we have 
\begin{align*} 
\|\Psi\|_1 \le c_1, \quad \|\Psi^{-1}\|_1 \le c_1,
\end{align*}

$(ii)$ there is a $\mathcal{C}^5$ map $h$ defined on $\overline{B}_n(0,1)$ with $h(0)=Dh(0)=0,$ so that
 $$\Psi(K \cap W_{p_0}) \supset \big\{(\mathbf{x}, h(\mathbf{x})): \mathbf{x} \in (\R^+)^n \cap \overline{B}_n(0,1) \big\}$$ 
and 
\begin{align} \label{ine_C1alphanorm2}
\|h\|_{5} \le c_1.
\end{align}
\end{lemma}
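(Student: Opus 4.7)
The plan is to adapt the proof of Lemma \ref{le_localcoordinates} to the piecewise singular setting. First, I would select from a fixed finite atlas of biholomorphic charts on $X$ (with uniformly bounded $\mathcal{C}^1$-norms) a chart around $p_0$ and translate it to obtain a biholomorphism $\Phi_1 : W_{p_0} \to B_{2n}(0, r_K)$ with $\Phi_1(p_0) = 0$. Then I would apply the definition of nondegenerate $\mathcal{C}^5$-piecewise submanifold to obtain a $\mathcal{C}^5$-diffeomorphism $\phi$ on a neighborhood of $p_0$ with $\phi(p_0) = 0$ and $\phi(K) = \bigcup_j P_j$ locally, where the $P_j$ are $n$-dimensional convex polyhedra. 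Writing $G := \Phi_1 \circ \phi^{-1}$ for the $\mathcal{C}^5$ transition, I would pick one polyhedron $P = P_{j_0}$ containing $0$ and denote by $V_P \subset \R^{2n}$ its supporting linear subspace.

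The crucial geometric step is to arrange that $V := dG(0)(V_P) \subset \C^n$ is totally real. By genericity, the real $n$-plane $dG(x)(V_P)$ is totally real for every $x$ in the interior of $P$; since the set of totally real subspaces is open but not closed in the Grassmannian, concluding the same for the limit $x \to 0$ is the delicate point of the proof. I would argue that among the polyhedra $P_j$ through $0$ (possibly after a $\mathcal{C}^5$-modification of $\phi$ compatible with the polyhedral structure) at least one yields a totally real $V$ at $p_0$. Granted this, I would choose a $\C$-linear isomorphism $L_0$ of $\C^n$ with $L_0(V) = \R^n$ and compose it with the $\C$-linear extension of a suitable real-linear automorphism of $\R^n$, obtaining $L$, so that $L(dG(0)(T_0 P)) \supset (\R^+)^n$; here $T_0 P$ is the polyhedral tangent cone of $P$ at $0$, which, being $n$-dimensional, contains an $n$-dimensional simplicial subcone straightenable to $(\R^+)^n$. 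Setting $\Psi := L \circ \Phi_1$ gives the sought biholomorphic chart, with $\|\Psi\|_1$ and $\|\Psi^{-1}\|_1$ bounded uniformly in $p_0$.

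To produce $h$, let $A : \R^n \to V_P$ be the real-linear isomorphism determined by the requirement that $L \circ dG(0) \circ A$ equals the standard inclusion $\R^n \hookrightarrow \C^n$; by the preceding construction $A((\R^+)^n) \subset T_0 P$, and since $P$ coincides with $T_0 P$ on a small ball about $0$, the image $A((\R^+)^n \cap B_n(0,r))$ lies in $P$ for small $r > 0$. Thus the $\mathcal{C}^5$ map $\mathbf{x} \mapsto L \circ G(A(\mathbf{x}))$ sends a $(\R^+)^n$-neighborhood of $0$ into $\Psi(K)$ and has differential at $0$ equal to the inclusion $\R^n \hookrightarrow \C^n$; its real-part projection is a local diffeomorphism by the inverse function theorem, and the imaginary part read through this inverse defines $h$ with $h(0) = Dh(0) = 0$. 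A $\mathcal{C}^5$ Whitney extension of $h$ to $\overline{B}_n(0,1)$ together with a uniform rescaling of $\Psi$ by a constant depending only on $(K,X)$ yields the bound $\|h\|_5 \le c_1$ by compactness of $K$. The hard part, as emphasized above, is ensuring the totally realness of $V$ at the singular point $p_0$: the genericity at regular points does not automatically propagate to the limit, so one must exploit the polyhedral structure (together with the flexibility in choosing $\phi$) to make the argument work.
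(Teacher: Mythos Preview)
Your overall strategy matches the paper's: work in a holomorphic chart, realize $K$ near $p_0$ as the $\mathcal{C}^5$ image $G$ of a union of convex $n$-polyhedra, pick one polyhedron $P$ through $0$, use a $\C$-linear map $L$ to send the (assumed) totally real plane $dG(0)(V_P)$ to $\R^n$, and read off $h$ from the resulting graph. The substantive difference, and the place where your argument breaks, is the passage from the polyhedral corner to $(\R^+)^n$. You parametrize the relevant piece of $\Psi(K)$ by $\mathbf{x}\mapsto L(G(A\mathbf{x}))$ for $\mathbf{x}\in(\R^+)^n$, invert the real-part projection to define $h$ on its image, and then Whitney-extend $h$ to the full ball. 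But the lemma requires the graph of $h$ over \emph{all} of $(\R^+)^n\cap\overline{B}_n(0,1)$ to lie in $\Psi(K)$, and the Whitney extension gives no control there: the real-part map is $\mathbf{x}\mapsto\mathbf{x}+O(|\mathbf{x}|^2)$, so its image of $(\R^+)^n$ has curved faces and need not contain $(\R^+)^n\cap B_n(0,r')$ for any $r'>0$. The paper proceeds differently. It first writes the \emph{full} smooth extension $K'$ (the image of the whole supporting plane $V_P$) as a graph of $h$ over a ball in $\R^n$; the polyhedral piece $K''\subset K'$ is then cut out by $\mathcal{C}^5$ inequalities $\tau_j(\mathbf{x})\ge 0$ with $\tau_j(\mathbf{x})=x_j+O(|\mathbf{x}|^2)$ after a linear change; a direct second-order estimate shows that the \emph{strictly interior} cone $Q_n=\{x_j\ge\tfrac{1}{3n}\sum_l x_l\}$ satisfies all $\tau_j\ge 0$ near the origin; and a final $\C$-linear change maps $(\R^+)^n$ onto $Q_n$. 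The strict interiority of $Q_n$ is exactly the buffer your argument lacks. You can repair your version by choosing the real-linear factor of $L$ so that $A\big((\R^+)^n\setminus\{0\}\big)$ lands in the \emph{interior} of $T_0 P$---this absorbs the $O(|\mathbf{x}|^2)$ error---and then defining $h$ via the full $K'$ rather than Whitney-extending.

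On the point you single out as delicate, that $dG(0)(V_P)$ be totally real: you are right that genericity at regular (interior) points need not pass to the boundary, and your proposed fix of picking another $P_j$ or modifying $\phi$ is not justified as stated. The paper's treatment is no more careful here---it simply asserts that the supporting submanifold $K'$ is generic ``by shrinking $W$ if necessary,'' which does not address the case where $p_0$ lies on $\partial K''$---so this is a shared lacuna rather than a defect particular to your proposal.
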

  
\begin{proof} Firstly, observe that by definition of $K,$ through every point $p$ on the singularity of $K,$ there exist a local chart $W$ of $p$ in $X$ so that $K \cap W$ is the intersection of $W$ with a finite union of convex polyhedra of dimension $n$ in $\R^{2n}.$ Here we identified $W$ and $K \cap W$ with their images in $\R^{2n}.$ Let $K''$ be one of these convex polyhedra containing $p$. Let $K'$ be the intersection of $W$ with the linear subspace of $\R^{2n}$ supporting $K''.$ Since $K'' \cap W$ is a generic submanifold of $W$ (because $K\cap W$ is so),  $K'$ is a  generic smooth submanifold without boundary of $W$  by shrinking $W$ if necessary. Note that $p \in K'' \subset K \cap W \cap K'$ and $\dim K''= \dim K'=n.$  

The above observation shows that we can cover $K$ by a finite number of  holomorphic charts $(W_j, \Psi_j)$ of $X$ such that there are  generic  $n$-dimensional submanifolds $K'_j$ without boundary of $W_j$ and subsets $K''_j$ of $K'_j \cap K \cap W_j$ diffeomorphic to the intersection of $W_j$ with a convex polyheron of dimension $n$ via suitable local charts of $X.$  Without loss of generality, we can suppose that $\Psi_j$ are biholomorphisms from $W_j$ to $B_{2n}(0,2)$ and  the open sets $\Psi_j^{-1}\big(B_{2n}(0,1)\big)$ also cover $K.$ 

Consider a chart $(W_{j_0}, \Psi_{j_0})$ such that $p_0 \in \Psi_{j_0}^{-1}\big(B_{2n}(0,1)\big).$ As above, we can suppose that $p_0 \in K''_{j_0}.$  Put $W_{p_0}:= W_{j_0}.$ Using the fact that $K'_{j_0}$ is a generic $n$-dimensional smooth submanifold of $W_{j_0}$ and  arguing as in Lemma \ref{le_localcoordinates}, we see that by replacing $\Psi_{j_0}$ by the composition of $\Psi_{j_0}$  with a suitable affine linear map of $\C^n,$ one obtain $\Psi_{j_0}(p_0)=0$ and $K'_{j_0}$ contains the graph of a $\mathcal{C}^5$ map $h(\mathbf{x})$ over $\overline{B}_n(0,1)$ and $h(0)=Dh(0)=0.$   

By the choice of $K''_{j_0}$ and rescaling $\Psi_{j_0}$ if necessary, there exist  $\mathcal{C}^5$ functions $\tau_j(\mathbf{x})$ defined on an open neighborhood of $\overline{B}_n(0,1)$ with $1\le j \le n$ such that
$$K \cap W_{p_0} \supset K''_{j_0}  \supset \{(\mathbf{x}, h(\mathbf{x})): \tau_j(\mathbf{x}) \ge 0\,  \text{ for all } 1 \le j \le n \}$$
and the Jacobian $D(\tau_1,\cdots, \tau_n)/ D\mathbf{x}$ is of maximal rank in $B_n(0,1)$. Write $\mathbf{x}=(x_1,\cdots,x_n).$ Since every linear change of coordinates in $\R^n$ can be extended naturally to be a complex linear change of $\C^n,$ using a suitable complex linear change of coordinates in $\C^n$ allows one to assume that tangent space of $\{\tau_j=0\}$ at $0$ is $\{x_j=0\}$ for $1 \le j \le n.$ Notice that the distortion caused by the change of coordinates is bounded independently of $p_0.$   For $\mathbf{x}\in \overline{B}_n(0,1),$ write 
\begin{align}\label{ine_taujTaylor}
\tau_j(\mathbf{x})=\tau_j(0)+ \sum_{l=1}^n\partial_{x_l}\tau_j(0)x_l+ O(|\mathbf{x}|^2) \ge x_j - \|\tau_j\|_2\sum_{l=1}^n |x_l|^2.
\end{align}
Put $C= \sup_{1\le j \le n} \|\tau_j\|_2.$ Define
$$Q_n=\big\{\mathbf{x} \in \R^n: x_j \ge \frac{1}{3n}\sum_{l=1}^n x_l \quad \forall \, 1\le j \le n  \big\} \subset (\R^+)^n.$$
For $\mathbf{x} \in Q_n$ with $|\mathbf{x}| \le \frac{1}{3nC},$  the inequality (\ref{ine_taujTaylor}) yields that 
$$\tau_j(\mathbf{x}) \ge x_j - C \frac{1}{3nC} \sum_{l=1}^n x_l \ge x_j - \frac{1}{3n} \sum_{l=1}^n x_l \ge 0,$$
for all $1\le j \le n.$ We deduce that 
$$K \cap W_{p_0}  \supset \big\{(\mathbf{x}, h(\mathbf{x})): \mathbf{x} \in Q_n \cap \overline{B}_{n}(0,\frac{1}{3nC}) \big\}.$$
The composition of a suitable linear change of coordinates in $\R^n$ with a dilation in $\R^n$  will map $Q_n$ onto $(\R^+)^n$ and map $\overline{B}_{n}(0, \frac{1}{3nC})$  onto a neighborhood of $\overline{B}_{n}(0,1).$ This map can be extended to be a holomorphic change of coordinates $\Psi'$ in $\C^n.$  Composing $\Psi_{j_0}$ with $\Psi'$, we get the desired change of coordinates and the property $(ii).$ The proof is finished.
\end{proof}
Let $K_h:=\big\{(\mathbf{x}, h(\mathbf{x})): \mathbf{x} \in  \overline{B}_n(0,1) \big\}$ which is a  $\mathcal{C}^5$ submanifold of $B_{2n}(0,1).$  Property $(ii)$ of Lemma \ref{le_localcoordinates2} implies that 
\begin{align}\label{ine_chuanC3cuahdanhgialuythua3}
|h(\mathbf{x})| \le c_1 |\mathbf{x}|^2, \quad |Dh(\mathbf{x})| \le c_1 |\mathbf{x}|,   \quad \text{for } |\mathbf{x}| \le 1.
\end{align}

To establish the desired family of analytic discs in this  context, we follow the same strategy as in the previous case.  Let $F'_{\boldsymbol{\tau}}, u'_{\mathbf{z},t,\boldsymbol{\tau}}, c_0$  be the maps and the constant defined in (\ref{de_dinhnghiacuaF'}), (\ref{de_dinhnghiacuau'}) and (\ref{ine_danhgiadaohamchoF'tau}).  As in the last subsection, consider the  following Bishop-type equation 
\begin{align}\label{Bishoptype2}
U'_{\mathbf{z},t,\boldsymbol{\tau}}(\xi)= 2t(|\mathbf{z}|,\cdots, |\mathbf{z}|) - \mathcal{T}_1\big(h(U'_{\mathbf{z},t,\boldsymbol{\tau}}) \big)(\xi) - \mathcal{T}_1 u'_{\mathbf{z},t,\boldsymbol{\tau}}(\xi),
\end{align}      
for $\mathbf{z}  \in B^*_{2n}(0,\frac{1}{2n})$, $ t \in (0,1]$ and $\boldsymbol{\tau} \in B_n(0,2).$ For simplicity,  we use the same notation $U'_{\mathbf{z},t,\boldsymbol{\tau}}(z)$ to denote the harmonic extension of $U'_{\mathbf{z},t,\boldsymbol{\tau}}(\xi)$ to $\D.$ Let $P'_{\mathbf{z},t,\boldsymbol{\tau}}(z)$ be  the harmonic extension of $h\big(U'_{\mathbf{z},t,\boldsymbol{\tau}}(\xi)\big)$ to $\D.$ 
If $U'_{\mathbf{z},t,\boldsymbol{\tau}}$ is a solution of (\ref{Bishoptype2}) which is at least H\"older continuous, then
$$F'^h_{\boldsymbol{\tau}}(z, \mathbf{z},t) := U'_{\mathbf{z},t,\boldsymbol{\tau}}(z)+ i  P'_{\mathbf{z},t,\boldsymbol{\tau}}(z)+ i  u'_{\mathbf{z},t,\boldsymbol{\tau}}(z)$$
 is clearly a family of analytic discs half-attached to $K_h$ and  
$$F'^h_{\boldsymbol{\tau}}(1, \mathbf{z},t) =2t(|\mathbf{z}|,\cdots, |\mathbf{z}|)+ i h(2t|\mathbf{z}|,\cdots, 2t|\mathbf{z}|) \in (\R^+)^n.$$
Our goal is to obtain a stronger property that  $F'^h_{\boldsymbol{\tau}}$ is  $I$-attached to $K \subset K_h,$ for some interval $I \subset \partial \D$ containing $1.$ In view of $(ii)$ of Lemma \ref{le_localcoordinates2},  it suffices to prove that 
$$U'_{\mathbf{z},t,\boldsymbol{\tau}}(\xi) \ge 0 \quad  \text{ for $\xi \in I$}.$$
Here for any $r \in \R$ and $\mathbf{v} \in \R^n,$ we write $\mathbf{v} \ge r$ to indicate that each component of $\mathbf{v}$ is greater than or equal to $r.$ A similar convention is applied to $\mathbf{v} \le r.$

\begin{proposition}\label{pro_Bishopequation2} There is a positive number $t_1\in (0,1)$ independent of $(p_0,\mathbf{z}, \boldsymbol{\tau})$ so that  for any $t \in (0, t_1)$  and any $\mathbf{z} \in B^*_{2n}(0,\frac{1}{2n}),$ the equation (\ref{Bishoptype2}) has a unique solution $U'_{\mathbf{z},t,\boldsymbol{\tau}}$ such that $U'_{\mathbf{z},t,\boldsymbol{\tau}}(\xi)$ is  $\mathcal{C}^{4, \frac{1}{2}}$ in $\xi,$ for $1\le j\le 4$ the differential $D^j_{(\mathbf{z},\boldsymbol{\tau})}U'_{\mathbf{z},t,\boldsymbol{\tau}} $ exists and is $\mathcal{C}^{4-j,\frac{1}{2}}$ in $\xi \in \partial \D.$ Moreover, the following  estimates hold:
\begin{align}\label{ine_danhgiachuancuaU'2}
\|U'_{\mathbf{z},t,\boldsymbol{\tau}}(\cdot)\|_{4, \frac{1}{2}} \le 4 c_0 t,\quad \|D^{j'}_{\boldsym{\tau}}U'_{\mathbf{z},t,\boldsymbol{\tau}}(\cdot)\|_{4-j, \frac{1}{2}} \le 4 c_0 t, \quad \| D^j_{\boldsym{\tau}} D_{\mathbf{z}} U'_{\mathbf{z},t,\boldsymbol{\tau}}(\cdot)\|_{3-j, \frac{1}{2}} \le 4 c_0 |\mathbf{z}|^{-1}t,
\end{align} 
for $j=0,1,2,3$ and $j'=1,2,3,4.$
\end{proposition}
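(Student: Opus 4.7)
The proof plan follows very closely the strategy of Proposition \ref{pro_Bishopequation}, with the $\mathcal{C}^{1,1/2}$ and $\mathcal{C}^{2,1/2}$ norms replaced throughout by $\mathcal{C}^{3,1/2}$ and $\mathcal{C}^{4,1/2}$ ones, and the extra parameter $\boldsymbol{\tau}$ carried along. First I would fix $t_1>0$ small enough (depending only on $c_0, c_1, c(n)$ and the Hilbert-transform norms $C_{k,1/2}$ for $k\le 4$) and, for each $(\mathbf{z},t,\boldsymbol{\tau})$ in the parameter region, introduce the closed subset $\mathcal{A}\subset \mathcal{C}^{3,1/2}(\partial\D,\R^n)$ of maps $U$ with $\|U\|_{3,1/2}\le 4c_0 t$ taking values in $\overline{B}_n(0,4c_0 t)$. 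Define
\[
G(U):= 2t(|\mathbf{z}|,\cdots,|\mathbf{z}|) - \mathcal{T}_1\bigl(h(U)\bigr) - \mathcal{T}_1 u'_{\mathbf{z},t,\boldsymbol{\tau}}.
\]
The self-map property of $G$ comes from combining $\|\mathcal{T}_1 u'_{\mathbf{z},t,\boldsymbol{\tau}}\|_{3,1/2}\lesssim t$ (which follows from $(\ref{ine_danhgiadaohamchoF'tau})$ together with $(\ref{ine_chuancuaT})$) with the quadratic estimate $\|h(U)\|_{3,1/2}\lesssim \|U\|_{3,1/2}^2$, the latter being an iterated application of the composition inequalities of Lemma \ref{le_danhgiag1g2chuanC11/2} together with $h(0)=Dh(0)=0$ and $(\ref{ine_chuanC3cuahdanhgialuythua3})$. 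Contractivity of $G$ with constant $t^{1/2}$ is obtained in the same way via $(\ref{ine_danhgiatichC1/233})$, and a bootstrap in $\mathcal{C}^{4,1/2}$ (which is permitted because $h\in\mathcal{C}^5$ and $u'_{\mathbf{z},t,\boldsymbol{\tau}}$ is smooth) promotes the unique fixed point $U'_{\mathbf{z},t,\boldsymbol{\tau}}$ to the required regularity, giving the first estimate of $(\ref{ine_danhgiachuancuaU'2})$.

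Next, to handle the parameter dependence I would formally differentiate $(\ref{Bishoptype2})$ in $\boldsymbol{\tau}$ and in $\mathbf{z}$. For a single $\boldsymbol{\tau}$-derivative this gives the linear equation
\[
V = -\mathcal{T}_1\bigl(Dh(U'_{\mathbf{z},t,\boldsymbol{\tau}})\cdot V\bigr) - \mathcal{T}_1 D_{\boldsymbol{\tau}} u'_{\mathbf{z},t,\boldsymbol{\tau}},
\]
and an analogous one with $D_{\mathbf{z}}$ in place of $D_{\boldsymbol{\tau}}$; thanks to $(\ref{ine_danhgiadaohamchoF'tau})$, the inhomogeneous term in the $\mathbf{z}$-equation has $\mathcal{C}^{3,1/2}$-norm $\lesssim t|\mathbf{z}|^{-1}$, and in the $\boldsymbol{\tau}$-equation $\lesssim t$. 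Since $Dh(U'_{\mathbf{z},t,\boldsymbol{\tau}})$ is $\mathcal{C}^{3,1/2}$ with small norm for $t$ small, exactly the same contraction argument produces unique solutions $V^{\boldsymbol{\tau}}_{\mathbf{z},t,\boldsymbol{\tau}}$ and $V^{\mathbf{z}}_{\mathbf{z},t,\boldsymbol{\tau}}$, with the bounds required by $(\ref{ine_danhgiachuancuaU'2})$ for $j=0,1$. To identify these with the actual partial derivatives of $U'_{\mathbf{z},t,\boldsymbol{\tau}}$, I would run the Picard iteration starting from $U^0=u'_{\mathbf{z},t,\boldsymbol{\tau}}$, differentiate the iterates (which are smooth in the parameters by induction), and then compare the two recursions — for $V^{k}:=D_{(\mathbf{z},\boldsymbol{\tau})}U^k$ and for the analogue $V'^k$ of the linearised equation — exactly as in the proof of Proposition \ref{pro_Bishopequation}; the bound $\|V^{k}-V'^{k}\|_{3,1/2}\to 0$ combined with $V'^k\to V$ yields the differentiability. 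Higher derivatives in $\boldsymbol{\tau}$ and in $(\mathbf{z},\boldsymbol{\tau})$ are then obtained by iterating this procedure, differentiating the linearised equation again; each new differentiation costs one unit of regularity in $\xi$, producing the estimates stated in $(\ref{ine_danhgiachuancuaU'2})$ for $j=2,3$.

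The main obstacle is essentially bookkeeping rather than a new idea: one must track a growing family of linearised Bishop equations — the $m$-th of which involves polynomial expressions in derivatives of $h$ evaluated at $U'_{\mathbf{z},t,\boldsymbol{\tau}}$ and in lower-order mixed derivatives of $U'_{\mathbf{z},t,\boldsymbol{\tau}}$ — and verify at each stage that the $\mathcal{C}^{4-j,1/2}$-norm of the inhomogeneous term is controlled by $t$ (resp.\ $t|\mathbf{z}|^{-1}$ when a $D_{\mathbf{z}}$ is involved). The $|\mathbf{z}|^{-1}$-singularity must be shown to enter exactly once, coming solely from $D_{\mathbf{z}} u'_{\mathbf{z},t,\boldsymbol{\tau}}$; this is the only nonroutine point, but it is immediate from $(\ref{ine_danhgiadaohamchoF'tau})$ because subsequent $D_{\boldsymbol{\tau}}$-differentiations of $D_{\mathbf{z}}u'_{\mathbf{z},t,\boldsymbol{\tau}}$ preserve, rather than worsen, the $|\mathbf{z}|^{-1}$-bound. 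Once this is organised, the estimates $(\ref{ine_danhgiachuancuaU'2})$ follow by induction on the total differentiation order with the same constant $4c_0$ (up to enlarging $c_0$, which is harmless as $c_0$ has already been absorbed into the constant appearing in Proposition \ref{pro_specialanalyticdisc'}).
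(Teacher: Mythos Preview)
Your proposal is correct and follows essentially the same approach as the paper: the paper's proof simply observes that, in view of the bounds $(\ref{de_dinhnghiacuau'})$--$(\ref{eq_daohamtheotautai0cuau'tau})$ on $u'_{\mathbf{z},t,\boldsymbol{\tau}}$ and its derivatives, the contraction-mapping argument of Proposition~\ref{pro_Bishopequation} carries over unchanged, and you have written out precisely that transfer in detail, with the appropriate upgrade from $\mathcal{C}^{1,1/2}$--$\mathcal{C}^{2,1/2}$ to $\mathcal{C}^{3,1/2}$--$\mathcal{C}^{4,1/2}$ norms and the extra parameter $\boldsymbol{\tau}$.
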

\begin{proof}  
By (\ref{de_dinhnghiacuau'})-(\ref{eq_daohamtheotautai0cuau'tau}), we see that  the arguments in the proof of Proposition \ref{pro_Bishopequation} still work for this case. Hence, the proof is finished.
\end{proof}
From now on, let $t_1$ be the constant in Proposition \ref{pro_Bishopequation2} and let $t \in (0,t_1).$ Let $U'_{\mathbf{z},t,\boldsymbol{\tau}}$ be the solution of  (\ref{Bishoptype2}) described in Proposition \ref{pro_Bishopequation2}. For $\xi\in \partial \D,$ write $\xi= e^{i\theta}$ with $\theta\in [-\pi,\pi).$

\begin{lemma}\label{le_danhgiadaohamcuaP'tau} There exists a constant $c_2$ independent of $(p_0,\mathbf{z},t,\boldsymbol{\tau})$ so that for any $(\mathbf{z},t,\boldsymbol{\tau}),$ we have
\begin{align}\label{ine_danhgiachuancuaP'2}
\|P'_{\mathbf{z},t,\boldsymbol{\tau}}(\cdot)\|_{4, \frac{1}{2}, \overline{\D}} \le c_2 t^2, \quad \|D^j_{\boldsymbol{\tau}} P'_{\mathbf{z},t,\boldsymbol{\tau}}(\cdot)\|_{4-j, \frac{1}{2},\overline{\D}} \le c_2 t^2,
\end{align} 
for $j=1,2,3,4$ and
\begin{align}\label{ine_danhgiachuancuaP'2theoz}
\|D^j_{\boldsymbol{\tau}} D_{\mathbf{z}}P'_{\mathbf{z},t,\boldsymbol{\tau}}(\cdot)\|_{3-j, \frac{1}{2},\overline{\D}} \le c_2 t^2|\mathbf{z}|^{-1},
\end{align}
for $j=0,1,2,3.$
\end{lemma}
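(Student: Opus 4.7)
The plan is to derive all three estimates from the pointwise identity $P'_{\mathbf{z},t,\boldsym{\tau}}(\xi) = h\big(U'_{\mathbf{z},t,\boldsym{\tau}}(\xi)\big)$ for $\xi \in \partial\D$, combined with Lemma \ref{le_localcoordinates2} (giving $h(0)=Dh(0)=0$ and $\|h\|_5 \le c_1$) and the a priori bounds of Proposition \ref{pro_Bishopequation2} on $U'$ and its $(\mathbf{z},\boldsym{\tau})$-derivatives. Since (\ref{ine_danhgiachuaCkcuauvoibien}) controls the $\mathcal{C}^{k,1/2}$-norm on $\overline{\D}$ of a harmonic extension by its boundary norm, it suffices to establish the estimates on $\partial\D$ and then extend to $\overline{\D}$ automatically.

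For the first estimate $\|P'_{\mathbf{z},t,\boldsym{\tau}}\|_{4,1/2,\overline{\D}}\le c_2 t^2$, I apply the Fa\`a di Bruno formula to $h\circ U'$: for $0\le k\le 4$,
$$D^k_\xi\big(h\circ U'\big) = \sum_{j=1}^{k}\,\sum_{\substack{k_1+\cdots+k_j=k\\ k_i\ge 1}} c_{j,(k_i)}\, D^j h(U')\cdot \prod_{i=1}^j D^{k_i}_\xi U'.$$
Taylor expansion and the vanishing $h(0)=Dh(0)=0$ yield the pointwise bound $|D^j h(\mathbf{x})|\le c_1\,|\mathbf{x}|^{\max(2-j,0)}$ on $B_n(0,1)$, while (\ref{ine_danhgiachuancuaU'2}) gives $|U'|\lesssim t$ and $\|D^{k_i}_\xi U'\|_0\lesssim t$ for $k_i\le 4$. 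Each Fa\`a di Bruno term is therefore bounded in supnorm by $t^{\max(2-j,0)+j}=t^{\max(2,j)}\le t^2$ (using $t\le 1$). For the H\"older seminorm at $k=4$, I split each product as in the proof of (\ref{ine_danhgiatichC1/2}) and use that $h\in\mathcal{C}^5$ so that $D^4 h$ is Lipschitz and hence $D^4 h(U')$ is $\mathcal{C}^{1/2}$ in $\xi$; the same bookkeeping then gives $[D^4_\xi P']_{1/2}\lesssim t^2$.

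The remaining estimates for $D^{j'}_{\boldsym{\tau}} P'$ and $D^j_{\boldsym{\tau}} D_{\mathbf{z}} P'$ follow from the same scheme applied to $P'=h(U')$ regarded as a function of the combined variables $(\xi,\mathbf{z},\boldsym{\tau})$. Each mixed derivative unfolds, via Fa\`a di Bruno, into a finite sum of terms $D^l h(U')\cdot\prod_i (\text{mixed } \xi,\mathbf{z},\boldsym{\tau}\text{-derivative of }U')$ whose orders sum to the prescribed total. By (\ref{ine_danhgiachuancuaU'2}), every factor involving only $\xi$ and $\boldsym{\tau}$ derivatives of $U'$ contributes $O(t)$, while a factor containing $D_{\mathbf{z}} U'$ contributes $O(t|\mathbf{z}|^{-1})$; combined with the $t^{\max(2-l,0)}$ from $D^l h(U')$, each term has size $t^2$ or $t^2|\mathbf{z}|^{-1}$, respectively, yielding (\ref{ine_danhgiachuancuaP'2}) and (\ref{ine_danhgiachuancuaP'2theoz}) after summing the finitely many terms. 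The main technical obstacle is the careful accounting of H\"older seminorms at top order: one must extend Lemma \ref{le_danhgiag1g2chuanC11/2} to $\mathcal{C}^{k,1/2}$-products and compositions for $k$ up to $4$, a routine but lengthy task, and verify that the loss of one $\xi$-derivative per $(\mathbf{z},\boldsym{\tau})$-derivative exactly matches the a priori bounds of Proposition \ref{pro_Bishopequation2}. This is where the stronger hypothesis $h\in\mathcal{C}^5$ (as opposed to $\mathcal{C}^4$) is crucial, in order that $D^4(h\circ U')$ be $\mathcal{C}^{1/2}$ in $\xi$ with the desired scaling.
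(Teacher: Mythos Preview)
Your proposal is correct and follows essentially the same approach as the paper: reduce from $\overline{\D}$ to $\partial\D$ via (\ref{ine_danhgiachuaCkcuauvoibien}), use the boundary identity $P'=h(U')$, exploit $h(0)=Dh(0)=0$ together with the a priori bounds (\ref{ine_danhgiachuancuaU'2}), and observe that each differentiated term carries at least two factors of size $t$. The paper's own proof is in fact considerably more laconic---it computes only $\partial_\xi P'=Dh(U')\,\partial_\xi U'$ explicitly, invokes (\ref{ine_chuanC3cuahdanhgialuythua3}) to get $|\partial_\xi P'|\lesssim t^2$, and then dismisses the higher-order and mixed-parameter derivatives with ``similar arguments'' and ``analogous reasoning''---so your Fa\`a di Bruno bookkeeping and explicit $t^{\max(2,l)}$ power counting constitute a fuller write-up of the same idea rather than a different route.
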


\begin{proof} In view of  (\ref{ine_danhgiachuaCkcuauvoibien}), it is enough to estimate the norms of  $P'_{\mathbf{z},t,\boldsymbol{\tau}}$ and  $D^j_{(\mathbf{z},\boldsymbol{\tau})} P'_{\mathbf{z},t,\boldsymbol{\tau}}$ on $\partial \D,$ for $j=1,2.$ Since  $P'_{\mathbf{z},t,\boldsymbol{\tau}}(\xi)= h\big(U'_{\mathbf{z},t,\boldsymbol{\tau}}(\xi)\big)$ on $\partial \D,$ we have 
$$\partial_{\xi}P'_{\mathbf{z},t,\boldsymbol{\tau}}(\xi)= Dh\big(U'_{\mathbf{z},t,\boldsymbol{\tau}}(\xi)\big) \partial_{\xi}U'_{\mathbf{z},t,\boldsymbol{\tau}}(\xi).$$  
This combined with (\ref{ine_chuanC3cuahdanhgialuythua3}) and (\ref{ine_danhgiachuancuaU'2}) yields that 
$$|\partial_{\xi}P'_{\mathbf{z},t,\boldsymbol{\tau}}(\xi)| \le c_1 |U'_{\mathbf{z},t,\boldsymbol{\tau}}(\xi)| \, |\partial_{\xi}U'_{\mathbf{z},t,\boldsymbol{\tau}}(\xi)| \le 4 c_0 c_1 t^2.$$
By similar arguments, we also have $|\partial^j_{\xi}P'_{\mathbf{z},t,\boldsymbol{\tau}}(\xi)| \lesssim  t^2$ with $j=1,2.$ Hence, we obtain the first inequality in (\ref{ine_danhgiachuancuaP'2}).  For the proofs of  the remaining inequalities,  observe that $D^j_{(\mathbf{z},\boldsymbol{\tau})} P'_{\mathbf{z},t,\boldsymbol{\tau}}$ is the harmonic extension of $D^j_{(\mathbf{z},\boldsymbol{\tau})} h \big( U'_{\mathbf{z},t,\boldsymbol{\tau}}(\cdot) \big)$ to $\D.$ Hence, analogous reasoning gives the desired result. The proof is finished. 
\end{proof}

 \begin{lemma} \label{le_daohamtheoxU'tauzt}   We always have 
$$|\partial_{x}U'_{\mathbf{z},t,\boldsymbol{\tau}}(1)| \lesssim t^2 |\mathbf{z}|,\quad | D_{\boldsym{\tau}}\partial_{x}U'_{\mathbf{z},t,\boldsymbol{\tau}}(1)| \lesssim t^2 |\mathbf{z}| \quad \text{and} \quad | D_{\mathbf{z}}\partial_{x}U'_{\mathbf{z},t,\boldsymbol{\tau}}(1)| \lesssim t^2.$$ 
 \end{lemma}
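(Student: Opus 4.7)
The plan is to derive the identity
$$\partial_x U'_{\mathbf{z},t,\boldsym{\tau}}(1) \;=\; Dh\big(U'_{\mathbf{z},t,\boldsym{\tau}}(1)\big)\cdot \partial_y U'_{\mathbf{z},t,\boldsym{\tau}}(1), \qquad (*)$$
and then read off all three bounds by observing that both factors on the right-hand side are small and that, crucially, $U'_{\mathbf{z},t,\boldsym{\tau}}(1)$ does not depend on $\boldsym{\tau}$.

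To prove $(*)$, I would apply $\partial_x$ at $\xi=1$ to the Bishop equation (\ref{Bishoptype2}). For any harmonic extension $v$ to $\D$, the holomorphicity of $-\mathcal{T}_1 v + iv$ and the Cauchy--Riemann equations give $-\partial_x \mathcal{T}_1 v = \partial_y v$; applying this to $v=h(U'_{\mathbf{z},t,\boldsym{\tau}})$ and to $v=u'_{\mathbf{z},t,\boldsym{\tau}}$ transforms the differentiated Bishop equation into
$$\partial_x U'_{\mathbf{z},t,\boldsym{\tau}}(1) \;=\; \partial_y P'_{\mathbf{z},t,\boldsym{\tau}}(1) + \partial_y u'_{\mathbf{z},t,\boldsym{\tau}}(1).$$
The function $u'_{\mathbf{z},t,\boldsym{\tau}} = u'_{\mathbf{z},t} + t\boldsym{\tau}\cdot\tilde u$ vanishes on $\partial^+\D$ (by construction and by Lemma \ref{le_dunghamungaphayu''}); formula (\ref{eq_daohamvhailantheotheta''}) of Lemma \ref{le_tinhcacdaohamcuau} identifies $\partial_y u'_{\mathbf{z},t,\boldsym{\tau}}(1)$ with the tangential derivative $\partial_\theta u'_{\mathbf{z},t,\boldsym{\tau}}(e^{i\theta})|_{\theta=0}$, which therefore vanishes. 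The same formula applied to $P'_{\mathbf{z},t,\boldsym{\tau}}$, together with the chain rule on the boundary values $P'_{\mathbf{z},t,\boldsym{\tau}}|_{\partial\D} = h(U'_{\mathbf{z},t,\boldsym{\tau}})$, gives $\partial_y P'_{\mathbf{z},t,\boldsym{\tau}}(1) = Dh(U'(1))\,\partial_y U'(1)$, yielding $(*)$.

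Granting $(*)$, the three estimates follow by routine multiplication. Since $\mathcal{T}_1 v(1)=0$ for every $v$ by normalisation, evaluating (\ref{Bishoptype2}) at $\xi=1$ gives $U'_{\mathbf{z},t,\boldsym{\tau}}(1) = 2t(|\mathbf{z}|,\ldots,|\mathbf{z}|)$, which is independent of $\boldsym{\tau}$ and of size $\lesssim t|\mathbf{z}|$. Then (\ref{ine_chuanC3cuahdanhgialuythua3}) yields $|Dh(U'(1))| \lesssim t|\mathbf{z}|$ and (\ref{ine_danhgiachuancuaU'2}) yields $|\partial_y U'(1)| \le \|U'\|_1 \lesssim t$, so $|\partial_x U'(1)| \lesssim t^2|\mathbf{z}|$. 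Differentiating $(*)$ in $\boldsym{\tau}$, the $\boldsym{\tau}$-independence of $U'(1)$ forces $D_{\boldsym{\tau}} U'(1)=0$, killing the $D^2 h$ term and leaving only $Dh(U'(1))\cdot D_{\boldsym{\tau}}\partial_y U'(1) \lesssim t|\mathbf{z}|\cdot t = t^2|\mathbf{z}|$ again by (\ref{ine_danhgiachuancuaU'2}). Differentiating $(*)$ in $\mathbf{z}$, both terms appear, and using $|D_{\mathbf{z}}U'(1)| \lesssim t$ together with $|D_{\mathbf{z}}\partial_y U'(1)| \lesssim t|\mathbf{z}|^{-1}$ from (\ref{ine_danhgiachuancuaU'2}), each contributes $\lesssim t^2$.

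The only substantive step is the derivation of $(*)$; the main potential pitfall is conflating tangential with normal derivatives on $\partial\D$, which is handled uniformly by (\ref{eq_daohamvhailantheotheta''}). Everything after $(*)$ is a direct book-keeping of the bounds already recorded in Proposition \ref{pro_Bishopequation2} and Lemma \ref{le_localcoordinates2}.
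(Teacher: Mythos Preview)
Your argument is correct and follows essentially the same route as the paper: both derive $\partial_x U'_{\mathbf{z},t,\boldsym{\tau}}(1)=\partial_y P'_{\mathbf{z},t,\boldsym{\tau}}(1)=Dh\big(U'(1)\big)\,\partial_\theta U'(1)$ from the Cauchy--Riemann equations together with $\partial_y u'_{\mathbf{z},t,\boldsym{\tau}}(1)=0$, then exploit $U'(1)=2t(|\mathbf{z}|,\dots,|\mathbf{z}|)$ (hence $D_{\boldsym{\tau}}U'(1)=0$, $D_{\mathbf{z}}U'(1)=O(t)$) and the bounds of Proposition \ref{pro_Bishopequation2} with (\ref{ine_chuanC3cuahdanhgialuythua3}). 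Your packaging via the explicit identity $(*)$ is a cosmetic difference only.
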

 \begin{proof} By (\ref{eq_daohamvhailantheotheta''}), one has
  $$\partial_{y}P'_{\mathbf{z},t,\boldsymbol{\tau}}(1)=\partial_{\theta} h\big(U'_{\mathbf{z},t,\boldsymbol{\tau}}(e^{i\theta})\big)|_{\theta=0} =Dh\big(U'_{\mathbf{z},t,\boldsymbol{\tau}}(1)\big) \partial_{\theta} U'_{\mathbf{z},t,\boldsymbol{\tau}}(1).$$
Since $U'_{\mathbf{z},t}(1)=2t(|\mathbf{z}|, \cdots, |\mathbf{z}|),$ using (\ref{ine_chuanC3cuahdanhgialuythua3}) and  (\ref{ine_danhgiachuancuaU'2}), we get
$$|\partial_{y}P'_{\mathbf{z},t,\boldsymbol{\tau}}(1) | \lesssim t^2 |\mathbf{z}|.$$
The Cauchy-Riemann equations for $F'^h_{\boldsymbol{\tau}}$ give 
\begin{align*}
\partial_{x}U'_{\mathbf{z},t,\boldsymbol{\tau}}(z)=\partial_{y}P'_{\mathbf{z},t,\boldsymbol{\tau}}(z)+ \partial_{y}u'_{\mathbf{z},t,\boldsymbol{\tau}}(z), 
\end{align*}
for all $z\in \overline{\D}.$   Substituting $z$ by $1$ in the last equation, 
 we obtain
\begin{align*}
\partial_{x}U'_{\mathbf{z},t,\boldsymbol{\tau}}(1)=\partial_{y}P'_{\mathbf{z},t,\boldsymbol{\tau}}(1) + \partial_{y}u'_{\mathbf{z},t,\boldsymbol{\tau}}(1)=\partial_{y}P'_{\mathbf{z},t,\boldsymbol{\tau}}(1)= O(t^2 |\mathbf{z}|).
\end{align*}
Hence, the first desired inequality follows.  As to the second one, by differentiating the last inequality with respect to $\boldsym{\tau},$ we get
\begin{align} \label{eq_daohamboldtau26/12lauqua}
D_{\boldsym{\tau}}\partial_{x}U'_{\mathbf{z},t,\boldsymbol{\tau}}(1)&=D_{\boldsym{\tau}} \partial_{y}P'_{\mathbf{z},t,\boldsymbol{\tau}}(1)\\
\nonumber
&= Dh\big(U'_{\mathbf{z},t,\boldsymbol{\tau}}(1)\big) D_{\boldsym{\tau}}\partial_{\theta} U'_{\mathbf{z},t,\boldsymbol{\tau}}(1)+ D^2h\big(U'_{\mathbf{z},t,\boldsymbol{\tau}}(1)\big) \big\{D_{\boldsym{\tau}} U'_{\mathbf{z},t,\boldsymbol{\tau}}(1), \partial_{\theta} U'_{\mathbf{z},t,\boldsymbol{\tau}}(1)\big\}.
\end{align}
On the other hand, differentiating (\ref{Bishoptype2}) with respect to $\boldsym{\tau}$ gives
$$D_{\boldsym{\tau}} U'_{\mathbf{z},t,\boldsymbol{\tau}}(\xi)= - \mathcal{T}_1\big(D_{\boldsym{\tau}} h(U'_{\mathbf{z},t,\boldsymbol{\tau}}) \big)(\xi) - \mathcal{T}_1 D_{\boldsym{\tau}} u'_{\mathbf{z},t,\boldsymbol{\tau}}(\xi).$$
In particular, this implies that  
\begin{align} \label{eq_daohamboldtautai126/12}
D_{\boldsym{\tau}} U'_{\mathbf{z},t,\boldsymbol{\tau}}(1)=0.
\end{align}
A similar equation for $D_{\mathbf{z}} U'_{\mathbf{z},t,\boldsymbol{\tau}}(\xi)$ and (\ref{eq_daohamboldtautai126/12}) shows that $D_{\mathbf{z}} U'_{\mathbf{z},t,\boldsymbol{\tau}}(1)= O(t)$ (note that $D_{\mathbf{z}} |\mathbf{z}|= O(1)$).   Now using the same reason as above, (\ref{eq_daohamboldtau26/12lauqua}) and (\ref{eq_daohamboldtautai126/12})  implies the second desired inequality. The third one is proved in the same way. The proof is finished.
 \end{proof}

\begin{lemma}\label{le_daohamcuaUtheoxytai1} There exist a positive constant $t_2<t_1$ independent of $(p_0,\mathbf{z},\boldsymbol{\tau},t)$ and  a $\mathcal{C}^1$ function 
$$\boldsymbol{\tau}(\mathbf{z},t): B^*_{2n}(0,\frac{1}{2n}) \times (0,t_2) \rightarrow B_n(0,1)$$
 so that for any $(\mathbf{z},t) \in  B^*_{2n}(0,\frac{1}{2n}) \times (0,t_2),$ we have 
\begin{align}\label{ine_daohamtheothetacuaUtai1}
\partial_{\theta}U'_{\mathbf{z},t,\boldsymbol{\tau}(\mathbf{z},t)}(e^{i\theta})|_{\theta=0}=  \frac{2t \Im \mathbf{z}}{\sqrt{|\mathbf{z}|}(2+ \sqrt{|\mathbf{z}|})}
\end{align}
and
\begin{align}\label{ine_daohamtheothetacuaUtai12}
\bigg| \partial^2_{\theta}U'_{\mathbf{z},t, \boldsymbol{\tau}(\mathbf{z},t)}(e^{i\theta})|_{\theta=0}- \frac{2t(2|\mathbf{z}|- \Re \mathbf{z})}{|\mathbf{z}|} \bigg| \le  c_2 t^2.
\end{align}
\end{lemma}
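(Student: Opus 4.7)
The plan is to translate the two angular-derivative conditions at $\theta=0$ into conditions on the Cartesian derivatives of $U'_{\mathbf{z},t,\boldsymbol{\tau}}$ at $z=1$, use the Cauchy--Riemann equations for the analytic disc $F'^h_{\boldsymbol{\tau}} = U'_{\mathbf{z},t,\boldsymbol{\tau}} + i(P'_{\mathbf{z},t,\boldsymbol{\tau}} + u'_{\mathbf{z},t,\boldsymbol{\tau}})$ to express everything in terms of the explicit derivatives of $u'_{\mathbf{z},t,\boldsymbol{\tau}}$ at $1$ plus $O(t^2)$ error terms from $P'$ and $\partial_x U'(1)$, and finally solve the resulting equations for $\boldsymbol{\tau}$ by a contraction argument. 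Applying (\ref{eq_daohamvhailantheotheta''}) (valid for any $\mathcal{C}^2$ function), the harmonicity of $U'_{\mathbf{z},t,\boldsymbol{\tau}}$, and the Cauchy--Riemann equations yields
\begin{align*}
\partial_\theta U'_{\mathbf{z},t,\boldsymbol{\tau}}(e^{i\theta})|_{\theta=0} &= \partial_y U'(1) = -\partial_x(P'+u'_{\mathbf{z},t,\boldsymbol{\tau}})(1),\\
\partial_\theta^2 U'_{\mathbf{z},t,\boldsymbol{\tau}}(e^{i\theta})|_{\theta=0} &= -\partial_x^2 U'(1) - \partial_x U'(1) = -\partial_x\partial_y(P'+u'_{\mathbf{z},t,\boldsymbol{\tau}})(1) - \partial_x U'(1),
\end{align*}
while (\ref{eq_daohamtheoxycuau'zt}), the splitting $u'_{\mathbf{z},t,\boldsymbol{\tau}} = u'_{\mathbf{z},t} + t\boldsymbol{\tau}\cdot\tilde{u}$, and (\ref{ine_daohamcuaungaphayu''}) give
\begin{align*}
\partial_x u'_{\mathbf{z},t,\boldsymbol{\tau}}(1) = -\frac{2t\Im\mathbf{z}}{\sqrt{|\mathbf{z}|}(2+\sqrt{|\mathbf{z}|})} + 10t\boldsymbol{\tau},\qquad
\partial_x\partial_y u'_{\mathbf{z},t,\boldsymbol{\tau}}(1) = -\frac{2t(2|\mathbf{z}|-\Re\mathbf{z})}{|\mathbf{z}|}.
\end{align*}

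The key observation is that the $\boldsymbol{\tau}$-dependent contribution to $\partial_x\partial_y u'_{\mathbf{z},t,\boldsymbol{\tau}}(1)$ vanishes by the design $\partial_x\partial_y\tilde{u}(1)=0$ of Lemma \ref{le_dunghamungaphayu''}. Together with the bounds $|\partial_x\partial_y P'_{\mathbf{z},t,\boldsymbol{\tau}}(1)| \le c_2 t^2$ (Lemma \ref{le_danhgiadaohamcuaP'tau}) and $|\partial_x U'_{\mathbf{z},t,\boldsymbol{\tau}}(1)| \lesssim t^2|\mathbf{z}| \le t^2$ (Lemma \ref{le_daohamtheoxU'tauzt}), this makes the second identity (\ref{ine_daohamtheothetacuaUtai12}) hold uniformly for every $\boldsymbol{\tau}\in B_n(0,1)$. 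The first identity (\ref{ine_daohamtheothetacuaUtai1}) will then, after cancelling the common ``$\Im\mathbf{z}$''-term, reduce to the scalar equation $10t\boldsymbol{\tau} + \partial_x P'_{\mathbf{z},t,\boldsymbol{\tau}}(1) = 0$, equivalently $\boldsymbol{\tau} = \Lambda_{\mathbf{z},t}(\boldsymbol{\tau})$ with
\begin{align*}
\Lambda_{\mathbf{z},t}(\boldsymbol{\tau}) := -\frac{1}{10t}\,\partial_x P'_{\mathbf{z},t,\boldsymbol{\tau}}(1).
\end{align*}

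To close, the bounds $\|P'_{\mathbf{z},t,\boldsymbol{\tau}}\|_{1,\overline{\D}} \le c_2 t^2$ and $\|D_{\boldsymbol{\tau}} P'_{\mathbf{z},t,\boldsymbol{\tau}}\|_{1,\overline{\D}} \le c_2 t^2$ from Lemma \ref{le_danhgiadaohamcuaP'tau} yield $|\Lambda_{\mathbf{z},t}(\boldsymbol{\tau})| \le c_2 t/10$ and $|D_{\boldsymbol{\tau}}\Lambda_{\mathbf{z},t}(\boldsymbol{\tau})| \le c_2 t/10$. Choosing $t_2\in(0,t_1)$ with $c_2 t_2/10 \le 1/2$ will make $\Lambda_{\mathbf{z},t}$ a $1/2$-contraction of $\overline{B}_n(0,1)$ into $\overline{B}_n(0, c_2 t/10)$, so Banach's fixed-point theorem produces a unique $\boldsymbol{\tau}(\mathbf{z},t) \in B_n(0,1)$. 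The $\mathcal{C}^1$-dependence on $(\mathbf{z},t)$ will follow from the implicit function theorem applied to $G(\boldsymbol{\tau},\mathbf{z},t) := \boldsymbol{\tau} - \Lambda_{\mathbf{z},t}(\boldsymbol{\tau})$, whose $\boldsymbol{\tau}$-differential is invertible by the contraction estimate and whose $\mathcal{C}^1$-dependence on $(\mathbf{z},t)$ comes from (\ref{ine_danhgiachuancuaP'2theoz}) combined with the polynomial (in $t$) and smooth (in $\mathbf{z}$) structure of $u'_{\mathbf{z},t,\boldsymbol{\tau}}$. No serious obstacle appears at this stage: the real work has already been done in Lemmas \ref{le_danhgiadaohamcuaP'tau} and \ref{le_daohamtheoxU'tauzt}, and in the algebraic design of $\tilde{u}$ that decouples the two conditions into one that is automatic and one that is a one-parameter fixed-point problem.
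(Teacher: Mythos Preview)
Your proof is correct and follows essentially the same route as the paper: convert the angular derivatives at $\theta=0$ into Cartesian derivatives at $z=1$ via (\ref{eq_daohamvhailantheotheta''}) and harmonicity, use the Cauchy--Riemann equations for $F'^h_{\boldsymbol{\tau}}$ to reduce to the explicit derivatives of $u'_{\mathbf{z},t,\boldsymbol{\tau}}$ plus $O(t^2)$ errors from $P'$ and $\partial_x U'(1)$, exploit the design $\partial_x\partial_y\tilde u(1)=0$ to make (\ref{ine_daohamtheothetacuaUtai12}) automatic, and solve (\ref{ine_daohamtheothetacuaUtai1}) by a contraction argument in $\boldsymbol{\tau}$.

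The only packaging difference is that the paper defines $\Phi_0(\boldsymbol{\tau}):=\partial_\theta U'_{\mathbf{z},t,\boldsymbol{\tau}}(1)$, Taylor-expands it at $\boldsymbol{\tau}=0$, and invokes its quantitative inverse function Lemma~\ref{le_implicitfunction2} with $A=D_{\boldsymbol{\tau}}\Phi_0(0)$ (which equals $-10t\,\mathrm{Id}$ up to an $O(t^2)$ correction), whereas you isolate the dominant linear term $-10t\,\mathrm{Id}$ from the outset and write the equivalent fixed-point equation $\boldsymbol{\tau}=-\tfrac{1}{10t}\partial_x P'_{\mathbf{z},t,\boldsymbol{\tau}}(1)$ directly. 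Your formulation is slightly cleaner and makes the role of the $O(t^2)$ bounds from Lemma~\ref{le_danhgiadaohamcuaP'tau} more transparent; the paper's formulation has the minor advantage that the same $\Phi_0$ object is reused verbatim in the subsequent Lemma~\ref{le_estimateonboldsymboltau} to estimate $|\boldsymbol{\tau}(\mathbf{z},t)|$ and $|D_{\mathbf{z}}\boldsymbol{\tau}(\mathbf{z},t)|$.
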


\begin{proof} The Cauchy-Riemann equations for $F'^h_{\boldsymbol{\tau}}$ give 
\begin{align} \label{eq_CauchyRiemequation}
\partial_{y}U'_{\mathbf{z},t,\boldsymbol{\tau}}(z)=-\partial_{x}P'_{\mathbf{z},t,\boldsymbol{\tau}}(z)- \partial_{x}u'_{\mathbf{z},t,\boldsymbol{\tau}}(z).
\end{align}
Combining this with (\ref{eq_daohamvhailantheotheta''}) gives
\begin{align} \label{eq_daohamtheoxcuaU'tai1tau}
\partial_{\theta}U'_{\mathbf{z},t,\boldsymbol{\tau}}(1)= \partial_{y}U'_{\mathbf{z},t}(1)=-\partial_{x}P'_{\mathbf{z},t,\boldsymbol{\tau}}(1)- \partial_{x}u'_{\mathbf{z},t,\boldsymbol{\tau}}(1).
\end{align}
Fix $\mathbf{z}$ and $t.$ Define $\Phi_0(\boldsymbol{\tau}):=\partial_{\theta}U'_{\mathbf{z},t,\boldsymbol{\tau}}(1).$ By definition of $u'_{\mathbf{z},t,\boldsymbol{\tau}}(1)$ and (\ref{eq_daohamtheoxycuau'zt}), we have 
\begin{align} \label{eq_taylorexpansioncuaPhitaugiatritai0}
\Phi_0(0)= -\partial_{x}P'_{\mathbf{z},t,0}(1)+ \frac{2t \Im \mathbf{z}}{\sqrt{|\mathbf{z}|}(2+ \sqrt{|\mathbf{z}|})}\cdot
\end{align}
The first inequality of (\ref{ine_danhgiachuancuaP'2}) implies that 
\begin{align}\label{ine_danhgiagiatri¨Phi0tai0}
\big|\Phi_0(0)- \frac{2t \Im \mathbf{z}}{\sqrt{|\mathbf{z}|}(2+ \sqrt{|\mathbf{z}|})} \big| \le c_2 t^{2}  \le t^{3/2},
\end{align}
for $t$ small enough. Differentiating (\ref{eq_daohamtheoxcuaU'tai1tau}) with respect to $\boldsymbol{\tau}$ gives 
 $$D^j_{\boldsymbol{\tau}} \Phi_0(\boldsymbol{\tau})=- D^j_{\boldsymbol{\tau}}\partial_{x}P'_{\mathbf{z},t,\boldsymbol{\tau}}(1)- D^j_{\boldsymbol{\tau}}\partial_{x}u'_{\mathbf{z},t,\boldsymbol{\tau}}(1),$$
 for $j=1$ or $2.$  By (\ref{eq_daohamtheotautai0cuau'tau}) and (\ref{ine_danhgiachuancuaP'2}), we see that  
 \begin{align}\label{ine_daohamcuaPhi0theotaubac2}
 \|D^2_{\boldsymbol{\tau}} \Phi_0\|_0 \le c_2 t^2 \le t^{3/2},
 \end{align}
  and
\begin{align}\label{ine_daohamcuaPhi0theotau}
 \frac{1}{c t} \ge \big|[D_{\boldsymbol{\tau}} \Phi_0(0)]^{-1}\big|_{\ln} \ge c [10t+ c_2 t^2]^{-1} \ge \frac{c}{11 t},
\end{align}
for $t$ small enough and some constant $c>0$ independent of $(\mathbf{z},t, \boldsymbol{\tau})$, where we recall that the norm $|\cdot|_{\ln}$ of a square matrix is the one of its associated linear map.  Taylor's expansion for $\Phi_0$ at $\boldsymbol{\tau}=0$ gives
\begin{align}\label{eq_taylorexpansioncuaPhitau}
 \Phi_0(\boldsymbol{\tau})=  \Phi_0(0)+D_{\boldsymbol{\tau}} \Phi_0(0)\boldsymbol{\tau}+ g_0(\boldsymbol{\tau}),
\end{align}
where $g_0(\boldsymbol{\tau})$ is $t^{3/2}$-Lipschitz by (\ref{ine_daohamcuaPhi0theotaubac2}) and $g_0(0)=0$.  A direct application of Lemma \ref{le_implicitfunction2} to $\Phi_0$ with $A=D_{\boldsymbol{\tau}}\Phi_0(0)$ and $M=t^{3/2}$  implies that for $t$ small enough, $\Phi_0$ is an injection on $B_n(0,1)$ and
$$\Phi\big(B_n(0,1) \big) \supset B_n\big(\Phi_0(0), ct \big)$$
Note that when $t$ is small, we see that 
$$\frac{2t \Im \mathbf{z}}{\sqrt{|\mathbf{z}|}(2+ \sqrt{|\mathbf{z}|})} \in B_n \big(\Phi_0(0), ct  \big)$$
 thanks to (\ref{ine_danhgiagiatri¨Phi0tai0}).  This yields that  there exists a unique $\boldsymbol{\tau}(\mathbf{z},t) \in B_n(0,1)$ such that   
$$\Phi_0\big(\boldsymbol{\tau}(\mathbf{z},t)\big)=-\frac{2t \Im \mathbf{z}}{\sqrt{|\mathbf{z}|}(2+ \sqrt{|\mathbf{z}|})}\cdot$$
The differentiability of $\boldsymbol{\tau}(\mathbf{z},t)$ is implied directly from the implicit function theorem for $\Phi_0(\boldsymbol{\tau},\mathbf{z},t),$ where we recovered the variable $(\mathbf{z},t)$ to indicate the dependence of $\Phi_0$ on them.  By definition of $\Phi_0,$  (\ref{ine_daohamtheothetacuaUtai1}) follows. 

Recall that $u'_{\mathbf{z},t, \boldsymbol{\tau}}= u'_{\mathbf{z},t}+  t \boldsym{\tau} \cdot \tilde{u}$ and $\partial_x \partial_y \tilde{u}(1)=0.$ Now differentiating (\ref{eq_CauchyRiemequation}) with respect to $y$ and using  (\ref{eq_daohamvhailantheotheta''}) and Lemma \ref{le_daohamtheoxU'tauzt} yield 
\begin{align*}
\partial^2_{\theta}U'_{\mathbf{z},t,\boldsymbol{\tau}}(1)&= \partial^2_{y}U'_{\mathbf{z},t,\boldsymbol{\tau}}(1)+ O(t^2)= -\partial_y \partial_{x}P'_{\mathbf{z},t,\boldsymbol{\tau}}(1) - \partial_y \partial_{x}u'_{\mathbf{z},t,\boldsymbol{\tau}}(1) + O(t^2)  \\
&= -\partial_y \partial_{x}P'_{\mathbf{z},t,\boldsymbol{\tau}}(1)+\frac{2t(2|\mathbf{z}|- \Re \mathbf{z})}{|\mathbf{z}|}+ O(t^2) \quad \text{(by (\ref{eq_daohamtheoxycuau'zt}))}.
\end{align*}
 This combined with (\ref{ine_danhgiachuancuaP'2})  implies (\ref{ine_daohamtheothetacuaUtai12}). The proof is finished.
\end{proof}

\begin{corollary} \label{cor_F'hhalfattachedtoK}
There exists a positive constant $t_3<t_2$ so that for any $t \in (0,t_3)$ we can find a positive number $\theta_t$ such that such that for  any $\mathbf{z} \in B^*_{2n}(0,\frac{1}{2n})$ the analytic disc $F'^h_{\boldsymbol{\tau}(\mathbf{z},t)}(\cdot, \mathbf{z},t)$ is $[e^{-i\theta_t},e^{i\theta_t}]$-attached to $K.$ 
\end{corollary}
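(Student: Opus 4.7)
The plan is to show that for $t$ and $\theta_t$ small enough, every component of $U'_{\mathbf{z},t,\boldsymbol{\tau}(\mathbf{z},t)}(e^{i\theta})$ is nonnegative on $|\theta| \le \theta_t$, uniformly in $\mathbf{z}$ and $p_0$. Combined with the bound $\|U'_{\mathbf{z},t,\boldsymbol{\tau}}\|_0 \le 4 c_0 t \le 1$ from Proposition \ref{pro_Bishopequation2} (choose $t_3 \le 1/(4c_0)$), this places the boundary of the disc on the graph piece $\{(\mathbf{x},h(\mathbf{x})) : \mathbf{x} \in (\R^+)^n \cap \overline{B}_n(0,1)\} \subset K$ from Lemma \ref{le_localcoordinates2}(ii). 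Note that on $\partial^+\D$ the auxiliary function $u'_{\mathbf{z},t,\boldsymbol{\tau}}$ vanishes by construction, so $F'^h_{\boldsymbol{\tau}}(e^{i\theta}) = U'_{\mathbf{z},t,\boldsymbol{\tau}}(e^{i\theta}) + i h(U'_{\mathbf{z},t,\boldsymbol{\tau}}(e^{i\theta}))$ on that arc, hence checking the real part suffices.

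The argument is a second-order Taylor expansion at $\theta = 0$. For the $j$-th component $U_j(\theta)$, evaluating (\ref{Bishoptype2}) at $\xi = 1$ and using $\mathcal{T}_1(\cdot)(1) = 0$ gives $U_j(0) = 2t|\mathbf{z}|$, while Lemma \ref{le_daohamcuaUtheoxytai1} — whose entire purpose is this calibration of $\boldsymbol{\tau}(\mathbf{z},t)$ — supplies
\[
\partial_\theta U_j(0) = \frac{2t\,\Im z_j}{\sqrt{|\mathbf{z}|}\,(2+\sqrt{|\mathbf{z}|})}, \qquad \partial_\theta^2 U_j(0) = \frac{2t(2|\mathbf{z}|-\Re z_j)}{|\mathbf{z}|} + O(t^2).
\]
The bound $\|U'_{\mathbf{z},t,\boldsymbol{\tau}(\mathbf{z},t)}\|_{3} \lesssim t$ from Proposition \ref{pro_Bishopequation2} controls the Taylor remainder by $Ct|\theta|^3$. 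Writing $U_j(\theta) = Q(\theta) + R(\theta)$ with $Q(\theta) = a_0 + a_1\theta + a_2 \theta^2$ dictated by the above values and $a_2 = t(2|\mathbf{z}|-\Re z_j)/|\mathbf{z}|$, we get $|R(\theta)| \le C(t^2 \theta^2 + t|\theta|^3)$.

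The quadratic $Q$ is exactly the Taylor polynomial at $\theta=0$ of the corresponding component of the model disc $F'$ from Section \ref{seclocal}, designed in Lemma \ref{le_analyticdiscuz0deltagamma} precisely to stay in $(\R^+)^n$. Indeed, using $(\Im z_j)^2 \le |\mathbf{z}|^2$, $(2+\sqrt{|\mathbf{z}|})^2 \ge 4$ and $2|\mathbf{z}| - \Re z_j \ge |\mathbf{z}|$, the discriminant satisfies $a_1^2 - 4 a_0 a_2 \le t^2|\mathbf{z}| - 8 t^2 |\mathbf{z}| = -7 t^2 |\mathbf{z}|$; completing the square yields
\[
Q(\theta) = a_2 \!\left(\theta + \tfrac{a_1}{2 a_2}\right)^{\!2} + \left(a_0 - \tfrac{a_1^2}{4 a_2}\right) \ge a_2 \tilde\theta^2 + \tfrac{7}{4}\, t|\mathbf{z}|,
\]
where $\tilde\theta := \theta + a_1/(2a_2)$ with $|a_1/(2a_2)| \le \sqrt{|\mathbf{z}|}/2$ and $a_2 \ge t$.

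The only remaining step, and the main technical obstacle, is to absorb $R$ into $Q$ uniformly in the delicate small-$\mathbf{z}$ regime where $a_1$ and $a_2$ are singular. Using $|\theta|^3 \le \theta_t \theta^2$ for $|\theta| \le \theta_t$ together with $\theta^2 \le 2\tilde\theta^2 + |\mathbf{z}|/2$, the remainder splits as
\[
|R(\theta)| \le 2C t(\theta_t + t)\, \tilde\theta^2 + \tfrac{C}{2} t(\theta_t + t)\, |\mathbf{z}|.
\]
Choosing $\theta_t$ and $t_3$ small enough — the constants being uniform in $p_0$ and $\mathbf{z}$ by the corresponding uniformity of Lemma \ref{le_daohamcuaUtheoxytai1} and Proposition \ref{pro_Bishopequation2} — makes the first contribution at most $\tfrac12 a_2 \tilde\theta^2$ and the second at most $\tfrac{7}{8} t|\mathbf{z}|$, whence $U_j(\theta) \ge \tfrac12 a_2 \tilde\theta^2 + \tfrac{7}{8} t|\mathbf{z}| > 0$ on $[-\theta_t,\theta_t]$ for every $j$ and every $\mathbf{z}$. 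This delivers the required arc length.
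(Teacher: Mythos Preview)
Your argument is correct and follows essentially the same route as the paper: Taylor-expand each component $U_j$ at $\theta=0$, use Lemma~\ref{le_daohamcuaUtheoxytai1} to identify the constant, linear, and quadratic coefficients, verify via a discriminant computation that the quadratic part is uniformly positive, and absorb the $O(t^2\theta^2)+O(t|\theta|^3)$ remainder using $\|U'\|_3\lesssim t$. The only difference is cosmetic: the paper splits the quadratic coefficient in half, putting one half with $a_0+a_1\theta$ to form a polynomial $\tilde f_1$ with nonpositive discriminant and reserving the other half as a $\theta^2$ cushion $\tilde f_2\ge\theta^2(\tfrac18-c_2t-4c_0|\theta|)$ that swallows the remainder directly, whereas you complete the square and pass to the shifted variable $\tilde\theta$.

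One minor numerical slip: from $4a_0a_2-a_1^2\ge 7t^2|\mathbf{z}|$ you need an \emph{upper} bound on $a_2$ to conclude $a_0-a_1^2/(4a_2)\ge \tfrac{7}{4}t|\mathbf{z}|$, but you only recorded $a_2\ge t$. Since in fact $a_2=t(2|\mathbf{z}|-\Re z_j)/|\mathbf{z}|\le 3t$, the correct bound is $\ge\tfrac{7}{12}t|\mathbf{z}|$; the rest of the absorption then goes through with $\tfrac{7}{12}$ in place of $\tfrac{7}{4}$.
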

\begin{proof} Write  $U'_{\mathbf{z},t,\boldsymbol{\tau}}=(U'_{\mathbf{z},t,\boldsymbol{\tau};1} ,\cdots,U'_{\mathbf{z},t,\boldsymbol{\tau};n}).$  We only need to prove that $U'_{\mathbf{z},t,\boldsymbol{\tau}(\mathbf{z},t);j}(e^{i\theta}) \ge 0$ for $|\theta|$ small enough and $1 \le j \le n$. Fix $1 \le j \le n.$ Put 
$$(\tilde{z},\delta, \gamma):=(z_j, \sqrt{|\mathbf{z}|},2|\mathbf{z}|)$$
which satisfies the condition (\ref{ine_conditionsondeltagamma}). We will mimic the proof of Lemma \ref{le_analyticdiscuz0deltagamma}.   By Lemma \ref{le_daohamcuaUtheoxytai1} and Taylor's expansion of $U'_{\mathbf{z},t,\boldsymbol{\tau};j}(e^{i\theta})$ at $\theta=0$, we have
$$t^{-1} U'_{\mathbf{z},t,\boldsymbol{\tau}(\mathbf{z},t);j} \ge  \gamma+ \frac{2\Im \tilde{z}}{\delta(2+\delta)}\theta + \frac{2(\gamma- \Re \tilde{z})}{\delta^2} \theta^2- c_2 t \theta^2 + \theta^{3} g'_{\mathbf{z}}(\theta),$$
where $g'_{\mathbf{z}}(\theta)$ is a function on $[-\pi/2,\pi/2]$ whose supnorm is bounded by $ t^{-1}\| U'_{\mathbf{z},t,\boldsymbol{\tau}(\mathbf{z},t)}\|_{3} \le 4 c_0 $ by (\ref{ine_danhgiachuancuaU'2}).  Put 
$$\tilde{f}_1(\theta):=\gamma+ \frac{2\Im \tilde{z}}{\delta(2+\delta)}\theta + \frac{\gamma- \Re \tilde{z}}{\delta^2} \theta^2$$
and 
$$\tilde{f}_2(\theta):= \frac{\gamma- \Re \tilde{z}}{\delta^2} \theta^2- c_2 t \theta^2 + \theta^{3} g'_{\mathbf{z}}(\theta).$$
We have 
$$t^{-1} U'_{\mathbf{z},t,\boldsymbol{\tau}(\mathbf{z},t);j}= \tilde{f}_1(\theta)+ \tilde{f}_2(\theta).$$
 Arguing as in Lemma \ref{le_analyticdiscuz0deltagamma}
shows that $\tilde{f}_1(\theta) \ge 0$ for all $\theta \in [-\pi/2, \pi/2]$ and $\tilde{f}_2(\theta) \ge 0$ provided that $t$ is small enough and $|\theta| \le \theta_t$ for some $\theta_t>0$ independent of $\mathbf{z}.$ Hence, $U'_{\mathbf{z},t,\boldsymbol{\tau}(\mathbf{z},t);j}(e^{i\theta}) \ge 0$ for all $1\le j \le n$ and $t$ small enough and $|\theta| \le \theta_t.$
The proof is finished. 
\end{proof}

We will need the following estimates on the function $\boldsymbol{\tau}(\mathbf{z},t).$

\begin{lemma} \label{le_estimateonboldsymboltau} Let $t, \mathbf{z}$ and $\boldsymbol{\tau}(\mathbf{z},t)$ be as in Lemma \ref{le_daohamcuaUtheoxytai1}. Then, there exist  positive constants $c_3$ and $t_4<t_3$ which are both independent of $(p_0,\mathbf{z},t)$ so that  for any $(\mathbf{z},t) \in  B^*_{2n}(0, \frac{1}{2n}) \times (0,t_4),$ we have 
\begin{align} \label{ine_chuancuatautheozt}
|\boldsymbol{\tau}(\mathbf{z},t)| \le c_3 t \quad \text{and} \quad |D_{\mathbf{z}}\boldsymbol{\tau}(\mathbf{z},t)| \le c_3 t |\mathbf{z}|^{-1}.
\end{align}
\end{lemma}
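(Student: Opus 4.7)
The plan is to convert the implicit definition of $\boldsymbol{\tau}(\mathbf{z},t)$ given in Lemma \ref{le_daohamcuaUtheoxytai1} into a clean fixed-point style equation, then read off the two bounds directly from the estimates already established in Lemma \ref{le_danhgiadaohamcuaP'tau}.

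First, I would extract the effective equation satisfied by $\boldsymbol{\tau}(\mathbf{z},t)$. Combining (\ref{eq_daohamtheoxcuaU'tai1tau}) with the decomposition $u'_{\mathbf{z},t,\boldsymbol{\tau}}=u'_{\mathbf{z},t}+t\boldsymbol{\tau}\cdot\tilde{u}$ from (\ref{de_dinhnghiacuau'}), the identity $\partial_x\tilde{u}(1)=10$ from (\ref{ine_daohamcuaungaphayu''}), and the first equality of (\ref{eq_daohamtheoxycuau'zt}), one obtains
\begin{align*}
\Phi_0(\boldsymbol{\tau})=-\partial_x P'_{\mathbf{z},t,\boldsymbol{\tau}}(1)+\frac{2t\,\Im\mathbf{z}}{\sqrt{|\mathbf{z}|}(2+\sqrt{|\mathbf{z}|})}-10t\boldsymbol{\tau}.
\end{align*}
Since $\boldsymbol{\tau}(\mathbf{z},t)$ is defined by $\Phi_0(\boldsymbol{\tau}(\mathbf{z},t))=\frac{2t\,\Im\mathbf{z}}{\sqrt{|\mathbf{z}|}(2+\sqrt{|\mathbf{z}|})}$, the equation reduces cleanly to
\begin{align}\label{eq_propose_tau}
10t\,\boldsymbol{\tau}(\mathbf{z},t)=-\partial_x P'_{\mathbf{z},t,\boldsymbol{\tau}(\mathbf{z},t)}(1).
\end{align}
The first inequality of (\ref{ine_chuancuatautheozt}) is now immediate: by the first bound of (\ref{ine_danhgiachuancuaP'2}) in Lemma \ref{le_danhgiadaohamcuaP'tau}, the right-hand side of (\ref{eq_propose_tau}) is bounded in modulus by $c_2 t^2$, and dividing by $10t$ gives $|\boldsymbol{\tau}(\mathbf{z},t)|\le (c_2/10)\,t$.

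For the derivative bound, I would differentiate (\ref{eq_propose_tau}) with respect to $\mathbf{z}$. By the chain rule, treating $P'_{\mathbf{z},t,\boldsymbol{\tau}}(1)$ as a function of $(\mathbf{z},\boldsymbol{\tau})$ with $t$ fixed,
\begin{align*}
10t\,D_{\mathbf{z}}\boldsymbol{\tau}(\mathbf{z},t)=-D_{\mathbf{z}}\partial_x P'_{\mathbf{z},t,\boldsymbol{\tau}(\mathbf{z},t)}(1)-D_{\boldsymbol{\tau}}\partial_x P'_{\mathbf{z},t,\boldsymbol{\tau}(\mathbf{z},t)}(1)\cdot D_{\mathbf{z}}\boldsymbol{\tau}(\mathbf{z},t).
\end{align*}
Using the case $j=0$ of (\ref{ine_danhgiachuancuaP'2theoz}) in Lemma \ref{le_danhgiadaohamcuaP'tau}, the first term on the right is bounded by $c_2 t^2 |\mathbf{z}|^{-1}$, and using the case $j=1$ of (\ref{ine_danhgiachuancuaP'2}) the operator $D_{\boldsymbol{\tau}}\partial_x P'_{\mathbf{z},t,\boldsymbol{\tau}}(1)$ has norm at most $c_2 t^2$. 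Hence
\begin{align*}
(10t-c_2 t^2)\,|D_{\mathbf{z}}\boldsymbol{\tau}(\mathbf{z},t)|\le c_2 t^2 |\mathbf{z}|^{-1},
\end{align*}
so that for $t$ smaller than some threshold $t_4<t_3$ independent of $p_0,\mathbf{z}$, we obtain $|D_{\mathbf{z}}\boldsymbol{\tau}(\mathbf{z},t)|\le c_3 t|\mathbf{z}|^{-1}$ by taking $c_3$ a suitably large absolute multiple of $c_2$.

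The main technical point is really just the reduction to (\ref{eq_propose_tau}); once that is in hand the rest is a direct substitution into the already proved bounds of Lemma \ref{le_danhgiadaohamcuaP'tau}. The only genuine smallness condition needed is that $c_2 t$ stay bounded away from $10$, which is why a $t_4<t_3$ appears. I expect no essentially new analytic difficulty, only a careful bookkeeping of where each estimate comes from.
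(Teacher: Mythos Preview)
Your proposal is correct. The key reduction to equation (\ref{eq_propose_tau}) is valid: since $u'_{\mathbf{z},t,\boldsymbol{\tau}}=u'_{\mathbf{z},t}+t\boldsymbol{\tau}\cdot\tilde{u}$ is \emph{exactly} linear in $\boldsymbol{\tau}$, the only nonlinear $\boldsymbol{\tau}$-dependence in $\Phi_0$ comes through $P'_{\mathbf{z},t,\boldsymbol{\tau}}$, and the defining equation of $\boldsymbol{\tau}(\mathbf{z},t)$ collapses precisely to $10t\,\boldsymbol{\tau}(\mathbf{z},t)=-\partial_x P'_{\mathbf{z},t,\boldsymbol{\tau}(\mathbf{z},t)}(1)$. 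Both estimates then follow immediately from Lemma \ref{le_danhgiadaohamcuaP'tau} as you describe.

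The paper proceeds slightly differently: it reuses the Taylor expansion (\ref{eq_taylorexpansioncuaPhitau}) of $\Phi_0$ around $\boldsymbol{\tau}=0$ (with the remainder $g_0$), arrives at $-\partial_x P'_{\mathbf{z},t,0}(1)=D_{\boldsymbol{\tau}}\Phi_0(0)\,\boldsymbol{\tau}(\mathbf{z},t)+g_0(\boldsymbol{\tau}(\mathbf{z},t))$, and then uses the bound (\ref{ine_daohamcuaPhi0theotau}) on $|[D_{\boldsymbol{\tau}}\Phi_0(0)]^{-1}|_{\ln}$ together with the $t^{3/2}$-Lipschitz property of $g_0$. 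For the derivative it differentiates this relation and invokes (\ref{ine_daohamcuaPhitautheoz}). Your route is more direct because you recognise that the linear-in-$\boldsymbol{\tau}$ part of $\Phi_0$ is \emph{exactly} $-10t\boldsymbol{\tau}$ (modulo the $P'$ contribution), so no Taylor remainder is needed; the paper's approach would still be necessary if $u'_{\mathbf{z},t,\boldsymbol{\tau}}$ depended nonlinearly on $\boldsymbol{\tau}$. In the present setting your argument is a clean simplification.
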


\begin{proof} We reuse the notation in the proof of in Lemma \ref{le_daohamcuaUtheoxytai1}. Recall that $\Phi_0(\boldsymbol{\tau}, \mathbf{z},t)=\partial_{\theta}U'_{\mathbf{z},t,\boldsymbol{\tau}}(1).$ Thus, (\ref{ine_danhgiachuancuaU'2}) implies that
\begin{align}\label{ine_daohamcuaPhitautheoz}
|D_{\mathbf{z}} D_{\boldsym{\tau}}\Phi_0(\boldsymbol{\tau}, \mathbf{z},t)| \le  4 c_0 t |\mathbf{z}|^{-1}.
\end{align}
 Since 
\begin{align*} 
\Phi_0\big(\boldsymbol{\tau}(\mathbf{z},t), \mathbf{z},t\big)=\frac{2t \Im \mathbf{z}}{\sqrt{|\mathbf{z}|}(2+ \sqrt{|\mathbf{z}|})},
\end{align*}
using (\ref{eq_taylorexpansioncuaPhitau}) and   (\ref{eq_taylorexpansioncuaPhitaugiatritai0}), we have 
\begin{align} \label{eq_danhgiachuanC0cuatau}
- \partial_x P'_{\mathbf{z},t,0}(1)= D_{\boldsymbol{\tau}} \Phi_0(0,\mathbf{z},t)\boldsymbol{\tau}(\mathbf{z},t)+ g_0\big(\boldsymbol{\tau}(\mathbf{z},t)\big).
\end{align}
Since $g_0$ is $c_2t^2$-Lipschitz (see (\ref{ine_daohamcuaPhi0theotaubac2})) and $g_0(0)=0,$ we deduce from (\ref{ine_daohamcuaPhi0theotau}) that 
$$ |\partial_x P'_{\mathbf{z},t,0}(1)| \ge  c t |\boldsymbol{\tau}(\mathbf{z},t)| - c_2 t^2  |\boldsymbol{\tau}(\mathbf{z},t)| \ge t |\boldsymbol{\tau}(\mathbf{z},t)|(c - c_2 t)  .$$
This combined with (\ref{ine_danhgiachuancuaP'2}) implies that 
\begin{align} \label{ine_danhgiachuanC0Cuatauproof}
 |\boldsymbol{\tau}(\mathbf{z},t)| \le  \frac{t}{c - c_2 t} \lesssim t,
\end{align}
for $t$ small enough. Hence, the first inequality of (\ref{ine_chuancuatautheozt}) follows. 

We now prove the second one. 
Differentiating the equality (\ref{eq_danhgiachuanC0cuatau}) with respect to $\mathbf{z}$ and using  (\ref{ine_danhgiachuancuaP'2theoz}) give 
\begin{align*} 
\big[ D_{\boldsym{\tau}}\Phi_0\big(0, \mathbf{z},t\big)+D_{\boldsym{\tau}}g_0\big(\boldsymbol{\tau}(\mathbf{z},t)\big) \big] D_{\mathbf{z}}\boldsymbol{\tau}(\mathbf{z},t)+ D_{\mathbf{z}} D_{\boldsymbol{\tau}}\Phi_0\big(\boldsymbol{\tau}(\mathbf{z},t), \mathbf{z},t\big) \boldsymbol{\tau}(\mathbf{z},t)= O( t^2 |\mathbf{z}|^{-1}).
\end{align*}
This together with (\ref{ine_danhgiachuanC0Cuatauproof}) and (\ref{ine_daohamcuaPhitautheoz})  yields that
\begin{align*} 
\big[ D_{\boldsym{\tau}}\Phi_0\big(0, \mathbf{z},t\big)+D_{\boldsym{\tau}}g_0\big(\boldsymbol{\tau}(\mathbf{z},t)\big) \big] D_{\mathbf{z}}\boldsymbol{\tau}(\mathbf{z},t)=O( t^2 |\mathbf{z}|^{-1}).
\end{align*}
Multiplying the two sides of the last equality by $D_{\boldsym{\tau}}\Phi_0\big(0, \mathbf{z},t\big)^{-1}$ and using (\ref{ine_daohamcuaPhi0theotau})  and $$|D_{\boldsym{\tau}}g_0\big(\boldsymbol{\tau}(\mathbf{z},t)\big)|_{\ln}=O(t^2)\quad \text{(by (\ref{ine_daohamcuaPhi0theotaubac2}))},$$
we get 
$$|D_{\mathbf{z}}\boldsymbol{\tau}|_{\ln} \lesssim t^{-1} t^2 |\mathbf{z}|^{-1} \lesssim  t |\mathbf{z}|^{-1}.$$
The proof is finished.
\end{proof}

Let $t \in (0,t_4).$ Define the map 
$$\Phi'^h: B_{2n}(0,\frac{1}{2n}) \rightarrow \C^n$$
 by putting $\Phi'^h(0)=0$  and  $\Phi'^h(\mathbf{z})= F'^h_{\boldsymbol{\tau}(\mathbf{z},t)}(z^*, \mathbf{z},t)$ for $\mathbf{z}\not = 0,$ where $z^*:=1-\sqrt{|\mathbf{z}|}.$ Our goal is to obtain similar estimates for $\Phi'^h$ as in Lemma \ref{le_differentiabilityofPhi}. However, due to the presence of $\boldsym{\tau},$ direct comparisons between $\Phi'^h$ and $\Phi'$ do not work efficiently as in the case without singularity. In order to get the expected results, we will use the technique in Corollary  \ref{cor_khaitrientaylorcuaftai1}.  

\begin{lemma} \label{le_differentiabilityofPhi'} There is a positive constant $c_4$ independent of $\mathbf{z},p_0$ and $t$ such that  for all $\mathbf{z} \in B^*_{2n}(0,\frac{1}{2n}),$   we have 
\begin{align}\label{ine_estimateonphi'}
|\Phi'^h(\mathbf{z})- t \mathbf{z}| \le  c_4 |\mathbf{z}|(t^2+ t \sqrt{|\mathbf{z}|}),
\end{align}
and 
\begin{align} \label{ine_estimateonphi2'}
\big|D_{\mathbf{z}}\Phi'^h(\mathbf{z})- t \Id \big| \le c_4 (t^2+ t\sqrt{|\mathbf{z}|}).
\end{align}
\end{lemma}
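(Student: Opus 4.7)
The plan is to bypass direct comparison between $\Phi'^h$ and $\Phi'$ (which fails to work cleanly because of the $\mathbf{z}$-dependence of $\boldsymbol{\tau}(\mathbf{z},t)$) and instead Taylor-expand $f(z) := F'^h_{\boldsymbol{\tau}(\mathbf{z},t)}(z, \mathbf{z}, t)$ directly at $z=1$ along the real axis, following the idea of Corollary~\ref{cor_khaitrientaylorcuaftai1}. Since $f$ is holomorphic on $\D$ and smooth up to the boundary, and $z^* = 1-s$ with $s := \sqrt{|\mathbf{z}|}$ lies on the real axis, I would write
\[
f(1-s) = f(1) - s\, f'(1) + \frac{s^2}{2}\, f''(1) + O\!\big(s^3 \|f\|_3\big),
\]
where $f' = \partial_x f = \partial_x U - i\partial_y U$ by the Cauchy--Riemann equations, with $f = U + iV$ for $U = U'_{\mathbf{z},t,\boldsymbol{\tau}(\mathbf{z},t)}$ and $V = P'_{\mathbf{z},t,\boldsymbol{\tau}(\mathbf{z},t)} + u'_{\mathbf{z},t,\boldsymbol{\tau}(\mathbf{z},t)}$.

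The coefficients can then be extracted from the lemmas already proved. The value $f(1) = 2t(|\mathbf{z}|,\dots,|\mathbf{z}|) + i\,h\big(2t(|\mathbf{z}|,\dots,|\mathbf{z}|)\big)$ contributes the leading real part and an imaginary error $O(t^2|\mathbf{z}|^2)$ by (\ref{ine_chuanC3cuahdanhgialuythua3}). For the first derivative, Lemma~\ref{le_daohamtheoxU'tauzt} gives $\partial_x U(1) = O(t^2|\mathbf{z}|)$, while the defining relation of $\boldsymbol{\tau}(\mathbf{z},t)$ in Lemma~\ref{le_daohamcuaUtheoxytai1} provides $\partial_y U(1) = \partial_\theta U(1) = \frac{2t\Im\mathbf{z}}{\sqrt{|\mathbf{z}|}(2+\sqrt{|\mathbf{z}|})}$, so that expanding $\frac{2}{2+\sqrt{|\mathbf{z}|}} = 1 + O(\sqrt{|\mathbf{z}|})$ yields $-s f'(1) = i\,t\,\Im\mathbf{z} + O(t|\mathbf{z}|^{3/2})$. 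For the second derivative, harmonicity of $U$ together with (\ref{eq_daohamvhailantheotheta''}) gives $\partial^2_x U(1) = -\partial^2_\theta U(1) - \partial_x U(1) = -\frac{2t(2|\mathbf{z}|-\Re\mathbf{z})}{|\mathbf{z}|} + O(t^2)$, so that $\frac{s^2}{2}\Re f''(1) = -2t(|\mathbf{z}|,\dots,|\mathbf{z}|) + t\,\Re\mathbf{z} + O(t^2|\mathbf{z}|)$, whose first term exactly cancels $\Re f(1)$. The analogous computation for $\Im f''(1) = \partial^2_x V(1) = -\partial^2_\theta V(1) - \partial_x V(1)$ uses the chain rule on $V = h(U) + u'$, the bound (\ref{ine_chuanC3cuahdanhgialuythua3}), and crucially the identity $\partial^2_\theta u'(1) = 0$ (since $u'_{\mathbf{z},t,\boldsymbol{\tau}}$ vanishes on $\partial^+\D$, via Lemma~\ref{le_tinhcacdaohamcuau}), producing $\partial^2_\theta V(1) = O(t^2|\mathbf{z}|)$. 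Summing all contributions and using $\|f\|_3 \lesssim t$ for the remainder produces $\Phi'^h(\mathbf{z}) = t\mathbf{z} + O\!\big(|\mathbf{z}|(t^2 + t\sqrt{|\mathbf{z}|})\big)$, i.e.\ the first claimed bound.

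The derivative estimate (\ref{ine_estimateonphi2'}) is obtained by differentiating $\Phi'^h(\mathbf{z}) = F'^h_{\boldsymbol{\tau}(\mathbf{z},t)}(z^*, \mathbf{z}, t)$ in $\mathbf{z}$, splitting into the three contributions from $z^* = 1-\sqrt{|\mathbf{z}|}$, from direct $\mathbf{z}$-dependence, and from $\boldsymbol{\tau}$-dependence, and then upgrading the Taylor-expansion argument above to its differentiated version. The uniform bounds on the parametric derivatives of $U'$ and $P'$ needed for this are furnished by Proposition~\ref{pro_Bishopequation2} and Lemma~\ref{le_danhgiadaohamcuaP'tau}, while Lemma~\ref{le_estimateonboldsymboltau} supplies $|D_{\mathbf{z}}\boldsymbol{\tau}| \lesssim t|\mathbf{z}|^{-1}$. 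I expect the main obstacle to be the careful bookkeeping of error orders in the derivative estimate: the $|\mathbf{z}|^{-1/2}$ singularity from $D_{\mathbf{z}} z^* = -\frac{1}{2}|\mathbf{z}|^{-1/2} D_{\mathbf{z}}|\mathbf{z}|$ and the $|\mathbf{z}|^{-1}$ factor from $D_{\mathbf{z}}\boldsymbol{\tau}$ must be systematically absorbed by the extra $s$ and $t$ factors gained from evaluating near $z=1$, and one must verify that the cancellations that made $f(1-s) - t\mathbf{z}$ small also persist at the level of first derivatives in $\mathbf{z}$.
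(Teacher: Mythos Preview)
Your proposal is correct and follows essentially the same approach as the paper: both bypass a direct comparison with $\Phi'$ and instead Taylor-expand $F'^h_{\boldsymbol{\tau}(\mathbf{z},t)}(\cdot,\mathbf{z},t)$ at $z=1$ along the real segment, invoking the technique of Corollary~\ref{cor_khaitrientaylorcuaftai1} and feeding in the derivative values from Lemmas~\ref{le_daohamtheoxU'tauzt}, \ref{le_daohamcuaUtheoxytai1}, \ref{le_estimateonboldsymboltau} together with the bounds on $P'$ from Lemma~\ref{le_danhgiadaohamcuaP'tau}. The only cosmetic difference is that the paper keeps the exact coefficient $\frac{2t\Im\mathbf{z}}{\sqrt{|\mathbf{z}|}(2+\sqrt{|\mathbf{z}|})}$ in both the first- and second-order terms so that their sum collapses \emph{exactly} to $t\Im\mathbf{z}$ (mirroring the identity~(\ref{eq_choiceofdaohamu}) at $\delta=\sqrt{|\mathbf{z}|}$), whereas you expand $\frac{2}{2+\sqrt{|\mathbf{z}|}}=1+O(\sqrt{|\mathbf{z}|})$ early and absorb the discrepancy into the $O(t|\mathbf{z}|^{3/2})$ error; both routes land inside the target bound $c_4|\mathbf{z}|(t^2+t\sqrt{|\mathbf{z}|})$.
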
 
\begin{proof}  
We want to study the behavior of $F'_{\boldsym{\tau}(\mathbf{z},t)}(z)$ near $z=1.$  By using Taylor's expansions, it is sufficient to estimate its partial derivatives at $1.$ Put 
$$\tilde{F}'(z,\mathbf{z},t):=F'_{\boldsym{\tau}(\mathbf{z},t)}(z,\mathbf{z},t).$$
Differentiating the last equality and using the second inequality of  (\ref{ine_chuancuatautheozt}) and (\ref{ine_danhgiachuancuaU'2}), one has
\begin{align} \label{ine_tildeF'daohamtheoz}
\|D_{\mathbf{z}} \tilde{F}'(\cdot,\mathbf{z},t)\|_{3} \lesssim \|D_{\mathbf{z}}F'_{\boldsym{\tau}(\mathbf{z},t)}(\cdot,\mathbf{z},t)\|_3+\|D_{\boldsym{\tau}}F'_{\boldsym{\tau}(\mathbf{z},t)}(z,\mathbf{z},t)\|_3 \,|D_{\mathbf{z}} \boldsym{\tau}(\mathbf{z},t)| \lesssim t |\mathbf{z}|^{-1}. 
\end{align}
Lemma  \ref{le_daohamcuaUtheoxytai1} implies that
$$\partial_x\Im \tilde{F}'(1,\mathbf{z},t)=- \partial_y U'_{\mathbf{z},t, \boldsym{\tau}(\mathbf{z},t)}(1)=- \frac{2t \Im \mathbf{z}}{\sqrt{|\mathbf{z}|}(2+ \sqrt{|\mathbf{z}|})}.$$
On the other hand, we have
$$\Im \tilde{F}'(z,\mathbf{z},t) = P'_{\mathbf{z},t, \boldsymbol{\tau}(\mathbf{z},t)}(z)+ u'_{\mathbf{z},t, \boldsymbol{\tau}(\mathbf{z},t)}(z).$$
Hence, 
\begin{align*}
\partial_x^2 \Im \tilde{F}'(1,\mathbf{z},t)&=\partial_x^2 P'_{\mathbf{z},t, \boldsymbol{\tau}(\mathbf{z},t)}(1)+    \partial_x^2 u'_{\mathbf{z},t, \boldsymbol{\tau}(\mathbf{z},t)}(1)\\
&=\partial_x^2 P'_{\mathbf{z},t, \boldsymbol{\tau}(\mathbf{z},t)}(1)+    \partial_x^2 u'_{\mathbf{z},t}(1)+ t \boldsym{\tau}(\mathbf{z},t) \cdot \partial_x^2\tilde{u}(1)\\
&=\partial_x^2 P'_{\mathbf{z},t, \boldsymbol{\tau}(\mathbf{z},t)}(1)+\frac{2t \Im \mathbf{z}}{\sqrt{|\mathbf{z}|}(2+ \sqrt{|\mathbf{z}|})}+t \boldsym{\tau}(\mathbf{z},t) \cdot \partial_x^2\tilde{u}(1),
\end{align*}
by (\ref{eq_daohamtheoxycuau'zt}).  By Taylor's expansion for $\Im \tilde{F}'(\cdot,\mathbf{z},t)$ at $z=1$ up to the order $3$ and using (\ref{ine_danhgiachuancuaP'2}) and the first inequality of (\ref{ine_chuancuatautheozt}), there is a function $g''_{\mathbf{z},t;1}$ defined on $[0,1]$ so that $g''_{\mathbf{z},t;1}(s)$ is $\mathcal{C}^{1,\frac{1}{2}}$ in $(s, \mathbf{z})$ and for any $s\in [0,1],$ we have  
\begin{align} \label{eq_taylorUboldsymtauzt1}
\Im \tilde{F}'(1-s,\mathbf{z},t) &= s\frac{2t \Im \mathbf{z}}{\sqrt{|\mathbf{z}|}(2+ \sqrt{|\mathbf{z}|})}+ s^2\frac{2t \Im \mathbf{z}}{2\sqrt{|\mathbf{z}|}(2+ \sqrt{|\mathbf{z}|})} \\
\nonumber
&\quad +t s^2 \boldsym{\tau}(\mathbf{z},t) \cdot \partial_x^2\tilde{u}(1)+   s^3 g''_{\mathbf{z},t;1}(s),
\end{align}
and 
$$\|g''_{\mathbf{z},t;1}\|_1 \lesssim   \| F'_{\boldsym{\tau}(\mathbf{z},t)}(\cdot,\mathbf{z},t)\|_4 \lesssim  t.$$ 
Additionally,  (\ref{ine_tildeF'daohamtheoz}) also imply that 
$$\| D_{\mathbf{z}}g''_{\mathbf{z},t;1}\|_0 \lesssim   \|D_{\mathbf{z}} \tilde{F}'(\cdot,\mathbf{z},t)\|_{3} \lesssim   t|\mathbf{z}|^{-1}.$$
Define 
$$g'_{\mathbf{z},t;1}(s):= t^{-1}sg''_{\mathbf{z},t;1}(s)+ \boldsym{\tau}(\mathbf{z},t) \cdot \partial_x^2\tilde{u}(1).$$
Thus,
$$|g'_{\mathbf{z},t;1}|_1 \lesssim s  +  t, \quad  \| D_{\mathbf{z}}g'_{\mathbf{z},t;1}\|_0 \lesssim   t|\mathbf{z}|^{-1}+  s  |\mathbf{z}|^{-1}.$$
Letting $s=\sqrt{|\mathbf{z}|}$ in (\ref{eq_taylorUboldsymtauzt1}) and using (\ref{eq_choiceofdaohamu}), (\ref{eq_choice2ofdaohamu}), we obtain
\begin{align} \label{eq_taylorUboldsymtauzt12}
\Im \tilde{F}'(1-\sqrt{|\mathbf{z}|},\mathbf{z},t)= t \Im \mathbf{z} +  t \tilde{g}'_{t;1}(\mathbf{z}),   
\end{align}
where $\tilde{g}'_{t;1}(\mathbf{z}):= |\mathbf{z}| g'_{\mathbf{z},t;1}(\sqrt{|\mathbf{z}|}).$ Direct computations give 
\begin{align} \label{eq_taylorUboldsymtauzt12g'1}
|\tilde{g}'_{t;1}(\mathbf{z})| \lesssim (t+ \sqrt{|\mathbf{z}|})|\mathbf{z}| \quad \text{and} \quad  \| D_{\mathbf{z}}\tilde{g}'_{t;1}\|_0 \lesssim t+ \sqrt{|\mathbf{z}|}.
\end{align}
Analogous arguments and Lemma \ref{le_daohamtheoxU'tauzt} also show that
$$\partial_x\Re \tilde{F}'(1,\mathbf{z},t)=\partial_x U'_{\mathbf{z},t,\boldsym{\tau}(\mathbf{z},t)}(1)=O(t^2 |\mathbf{z}|),$$
$$D_{\mathbf{z}}\partial_x\Re \tilde{F}'(1,\mathbf{z},t)=D_{\mathbf{z}}\partial_x U'_{\mathbf{z},t,\boldsym{\tau}(\mathbf{z},t)}(1)+D_{\boldsym{\tau}}\partial_x U'_{\mathbf{z},t,\boldsym{\tau}(\mathbf{z},t)}(1) D_{\mathbf{z}} \boldsym{\tau}(\mathbf{z},t)= O(t^2),$$
and 
$$\partial^2_x\Re \tilde{F}'(1,\mathbf{z},t)=- \partial^2_y\Re \tilde{F}'(1,\mathbf{z},t)=-\partial^2_{\theta} U'_{\mathbf{z},t,\boldsym{\tau}(\mathbf{z},t)}(1)+ O(t^2)=-\frac{2t(2|\mathbf{z}|- \Re \mathbf{z})}{|\mathbf{z}|}+ O(t^2)$$
by (\ref{ine_daohamtheothetacuaUtai12}). Hence, as above there exists a $\mathcal{C}^1$ function $\tilde{g}'_{t;2}(\mathbf{z})$ on $B^*_{2n}(0,\frac{1}{2n})$ such that
\begin{align} \label{eq_taylorUboldsymtauzt2}
\Re \tilde{F}'(1-\sqrt{|\mathbf{z}|},\mathbf{z},t)= t \Re \mathbf{z} +  t \tilde{g}'_{t;2}(\mathbf{z})
\end{align}
and
\begin{align} \label{eq_taylorUboldsymtauzt12g'2}
|\tilde{g}'_{t;2}(\mathbf{z})| \lesssim (t+ \sqrt{|\mathbf{z}|})|\mathbf{z}| \quad \text{and} \quad  \| D_{\mathbf{z}}\tilde{g}'_{t;2}\|_0 \lesssim t+ \sqrt{|\mathbf{z}|}.
\end{align}
By (\ref{eq_taylorUboldsymtauzt12}) and (\ref{eq_taylorUboldsymtauzt2}), we get 
$$\Phi'^h(\mathbf{z})=\tilde{F}'(1-\sqrt{|\mathbf{z}|},\mathbf{z},t)= t \mathbf{z}+ t (\tilde{g}'_{t;2}+ i \tilde{g}'_{t;1}).$$
Using (\ref{eq_taylorUboldsymtauzt12g'2}) and (\ref{eq_taylorUboldsymtauzt12g'1}), we get the desired results. The proof is finished.
\end{proof}

The proof for the following lemma is similar to Lemma \ref{le_implicittheorem}. 

\begin{lemma} \label{le_implicittheorem'} There are  positive constant $t_5<t_4$ and $r'_0<1/(2n)$ independent of $(p_0,\mathbf{z})$ such that for any $t \in (0,t_5]$ and any $\mathbf{z} \in B^*_{2n}(0,r'_0),$ the set  $\Phi'^h \big(B_{2n}(0, r'_0)\big)$ contains the ball $B_{2n}(0, r'_0t /2).$
\end{lemma}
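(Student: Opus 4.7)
The plan is to mimic the proof of Lemma \ref{le_implicittheorem}, applying the fixed-point statement of Lemma \ref{le_implicitfunction2} to the map $\Phi'^h$, with quantitative bounds coming from Lemma \ref{le_differentiabilityofPhi'}. First I would set
$$g'^h(\mathbf{z}) := \Phi'^h(\mathbf{z}) - t\mathbf{z},$$
so that $g'^h(0)=0$ and, by the estimate \eqref{ine_estimateonphi2'} in Lemma \ref{le_differentiabilityofPhi'},
$$\big|D_{\mathbf{z}} g'^h(\mathbf{z})\big| \le c_4\bigl(t^{2} + t\sqrt{|\mathbf{z}|}\bigr) \qquad \text{for } \mathbf{z} \in B^*_{2n}(0,1/(2n)).$$

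Next I would fix $r'_0$ small enough that $2r'_0 \le 1/(2n)$ and $c_4\sqrt{2r'_0} \le 1/4$, and set $t_5 := \min\{1/(4c_4),\, t_4\}$. Both $r'_0$ and $t_5$ are then independent of $(p_0,\mathbf{z})$ because $c_4$ is. For every $\mathbf{z} \in \overline{B}_{2n}(0,2r'_0)$ and every $t \in (0,t_5]$, this yields
$$\big|D_{\mathbf{z}} g'^h(\mathbf{z})\big| \le c_4 t \cdot t + c_4 t \sqrt{2r'_0} \le \tfrac{t}{4} + \tfrac{t}{4} = \tfrac{t}{2},$$
so that $g'^h$ is $t/2$-Lipschitz on $\overline{B}_{2n}(0,2r'_0)$ and vanishes at the origin.

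Finally, I would apply Lemma \ref{le_implicitfunction2} to the decomposition $\Phi'^h = A\,\mathbf{z} + g'^h(\mathbf{z})$ with $A := t\,\Id$ (so $|A^{-1}|_{\ln} = 1/t$), $M := t/2$, and $r := r'_0$. The hypothesis $|A^{-1}|_{\ln}M = 1/2 < 1$ is satisfied, so the lemma gives, for every $\tilde{\mathbf{z}}$ in $B_{2n}\!\bigl(0,\tfrac{1-1/2}{1/t}\,r'_0\bigr) = B_{2n}(0, r'_0 t/2)$, a unique preimage $\mathbf{z}^{*} \in B_{2n}(0,r'_0)$ with $\Phi'^h(\mathbf{z}^{*}) = \tilde{\mathbf{z}}$. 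This is exactly the claimed inclusion $\Phi'^h\bigl(B_{2n}(0,r'_0)\bigr) \supset B_{2n}(0, r'_0 t/2)$.

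There is essentially no new obstacle here: all the difficulty — controlling the auxiliary parameter $\boldsymbol{\tau}(\mathbf{z},t)$ chosen to make $\partial_\theta U'$ match the target value, and carrying the Bishop-type estimates through to the error term — has already been packaged into Lemma \ref{le_differentiabilityofPhi'}. The one genuine difference from Lemma \ref{le_implicittheorem} is the additional $\sqrt{|\mathbf{z}|}$ correction in the Lipschitz bound, which reflects the non-smoothness of $K$ at its singularity; it is absorbed simply by choosing $r'_0$ small enough, and forces no change in the scheme of proof.
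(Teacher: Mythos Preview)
Your proof is correct and follows exactly the approach the paper intends: the paper simply remarks that the proof is ``similar to Lemma \ref{le_implicittheorem}'', and your write-up fills in precisely those details, using the direct comparison of $\Phi'^h$ with $t\,\Id$ supplied by Lemma \ref{le_differentiabilityofPhi'} and then applying Lemma \ref{le_implicitfunction2}. Your observation that the extra $\sqrt{|\mathbf{z}|}$ term is absorbed by shrinking $r'_0$ is the only new point, and you handle it correctly.
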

\noindent
\begin{proof}[Proof of Proposition \ref{pro_familydiscK} for the case with singularity.] In our chosen local coordinates around $p_0,$ we have $p_0=0$ and $p=\mathbf{z}.$  Let $t \in (0,t_5]$ and $\mathbf{z} \in B^*_{2n}(0,r'_0)$ as in Lemma \ref{le_implicittheorem'}. Without loss of generality, we can suppose that 
$$t_5+ \sqrt{r'_0} \le 1/(2c_4).$$
 For any $\mathbf{z} \in B^*_{2n}(0,  r'_0 t /2),$ there exists $\mathbf{z}^* \in B_{2n}(0, r'_0)$ for which $\Phi'^h(\mathbf{z}^*)=\mathbf{z}.$ We deduce from (\ref{ine_estimateonphi'}) that 
$$|\mathbf{z}- t  \mathbf{z}^*| \le c_4 |\mathbf{z}^*|  (t^2+ t \sqrt{|\mathbf{z}|^*}) \le  \frac{|t \mathbf{z}^*|}{2}.$$
 Let $f(z):= F'^h(z,\mathbf{z}^*, t_5)$ and $z^*:= 1- \sqrt{|\mathbf{z}^*|}.$ The last inequality implies that 
$$|1-z^*|^2\le 2|\mathbf{z}|/t.$$
As in the case without singularity, the analytic disc $f$  satisfies all properties in Proposition \ref{pro_familydiscK}.  

Now, we explain how to obtain the desired analytic discs when $\dim_{\R} K >n$. In the last subsection, we sliced $K$ by generic $n$-dimensional submanifolds $K'$ in a uniform way. Then, one just applied the previous result for $K'$ to get discs partly attached to $K.$ In our present case, such slicing does not always work due to the fact that a hypersurface passing an edge of $K$ may only intersect $K$ at that point. Hence, we do not get a such a family $K'$ as above. We will use the same idea with some additional caution. As just mentioned, we only need to take care of the edges of $K.$ Let $p_e$ be an edge of $K.$ By definition of $K,$ there exists a local chart $(\tilde{W}_{p_e},\tilde{\Psi})$ of $p_e$ in $X$ such that $\tilde{\Psi}$ is a diffeomorphism from $\tilde{W}_{p_e}$ to $B_{2n}(0,2)$  and $\tilde{\Psi}(K\cap \tilde{W}_{p_e})$ is the intersection of a finite union of convex polyhedra with $B_{2n}(0,2).$ For simplicity, we identify $K$ with $\tilde{\Psi}(K)$ and suppose that $K$ is just a convex polyhedron.  Hence, it is easy to choose a $(3n-\dim K)$-dimensional subspace $H_{p_e}$ of $\R^{2n}$ such that the affine subspace $p_e+ H_{p_e}$ intersects $K$ at a $n$-dimensional convex polyhedron $K'_{p_e}$ which is generic at $p_e$ in the sense of the Cauchy-Riemann geometry: $K'_{p_e}+ J K'_{p_e}=\R^{2n}$ where $J$ is the complex structure of $X,$ we identified $T_{p_e}X$ with $R^{2n}.$ Since $p_e$ is an edge, the last property implies that the same thing also holds  for any $p_0\in \R^{2n}$ close enough to $p_e, i.e,$ $(p_0+ H_{p_e}) \cap K = K_{p_0}$ and $K_{p_0}$ generic at $p_0.$ To summarize, we just get a family of  generic $n$-dimensional local submanifolds $K'_{p_0}$ of $K$ uniformly in $p_0$. Now apply the above result for each $K_{p_0},$ we get the desired conclusion. The proof  is finished. 
\end{proof}

\bibliography{test2}
\bibliographystyle{siam}

\end{document}